\newtheorem{theorem}{Theorem}
\newtheorem{proposition}[theorem]{Proposition}
\newtheorem{corollary}[theorem]{Corollary}
\newtheorem{lemma}[theorem]{Lemma}
\newtheorem*{definition}{Definition}
\begin{document}
\title{The heat kernel on an asymptotically conic manifold}
\author{David A. Sher}
\date{}
\begin{abstract} In this paper, we investigate the long-time structure of the heat kernel on a Riemannian manifold $M$ which is asymptotically conic near infinity. Using geometric microlocal analysis and building on results of Guillarmou and Hassell in \cite{gh1}, we give a complete description of the asymptotic structure of the heat kernel in all spatial and temporal regimes. We apply this structure to define and investigate a renormalized zeta function and determinant of the Laplacian on $M$.
\end{abstract}
\maketitle
\section{Introduction}

We study the heat kernel on asymptotically conic manifolds. Asymptotically conic manifolds should be thought of as those complete manifolds which are approximately conic near infinity. More specifically, we have the following definition \cite{gh1}:

\begin{definition} Let $(M,g)$ be a complete Riemannian manifold without boundary of dimension $n$, and let $\bar M$ be the usual radial compactification of $M$. Let $(N,h_0	)$ be a closed Riemannian manifold of dimension $n-1$. We say that $(M,g)$ is asymptotically conic with cross-section $(N,h_0)$ if in a neighborhood of $\partial\bar M$, $\bar M$ is isometric to $[0,\delta)_x\times N_y$ with the metric
\begin{equation}\label{acmetric}g=\frac{dx^2}{x^4}+\frac{h(x)}{x^2}.\end{equation}
Here $x$ is a smooth function on $\bar M$ with $x=0$ and $dx\neq 0$ on $\partial\bar M$ (we call this a \emph{boundary defining function} for $\partial\bar M$) and a smooth family of metrics $h(x)$ on $N$ with $h(0)=h_0$. Throughout, we let $z$ be a global coordinate on $M$, writing $z=(x,y)$ in a neighborhood of the boundary of $\bar M$.
\end{definition}

In particular, Euclidean space $\mathbb R^n$ is asymptotically conic with cross-section $S^{n-1}$; we may choose $x=r^{-1}$. Any complete manifold which is exactly Euclidean or conic near infinity is, of course, also asymptotically conic. The condition (\ref{acmetric}) may be weakened by replacing $h(x)$ with any symmetric 2-tensor $h'(x,y)$ which restricts to a metric $h_0(x)$ on the boundary at $x=0$; an observation of Melrose and Wunsch \cite{mw} shows that these conditions are in fact equivalent.

Asymptotically conic manifolds are a relatively well-behaved class of manifolds, and as such the theory of the heat equation is relatively advanced. In particular, it is easy to see from (\ref{acmetric}) that all sectional curvatures of $(M,g)$ approach zero as $x$ goes to zero, and thus that $(M,g)$ has bounded sectional curvature. For complete manifolds of bounded sectional curvature, a classical theorem of Cheng-Li-Yau \cite{cly} gives the following Gaussian upper bound for the heat kernel:

\begin{theorem}\label{chengliyau}\cite{cly} For any $T>0$, there exist nonzero constants $C_1$ and $C_2$ such that the heat kernel on $M$, denoted $H^M(t,z,z')$, satisfies, for any $z,z'\in M$ and any $t\in (0,T]$,
\begin{equation}\label{gaussianub} H^M(t,z,z')\leq \frac{C_1}{t^{n/2}}e^{-\frac{|z-z'|^2}{C_2t}}.
\end{equation}
\end{theorem}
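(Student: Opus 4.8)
The plan is to deduce (\ref{gaussianub}) from two purely geometric features of asymptotically conic manifolds and then to run the standard heat-kernel machinery. First I would extract the consequences of (\ref{acmetric}): writing the metric near $\partial\bar M$ in geodesic polar form $g\sim dr^2+r^2h(1/r)$ with $r\sim x^{-1}$, a direct computation shows all sectional curvatures are $O(x^2)=O(r^{-2})$; in particular $|\mathrm{sec}|\le K$ on $M$ and $\mathrm{Ric}\ge-(n-1)K\,g$. Next I would establish the uniform volume estimate $c\,\rho^n\le\mathrm{Vol}_g(B(z,\rho))\le C\,\rho^n$ for all $z\in M$ and all $\rho>0$: the upper bound follows from Bishop--Gromov when $\rho$ is bounded and from the conic structure for large $\rho$; the lower bound follows from bounded geometry (a uniform lower injectivity-radius bound, valid because outside a compact core the geometry is uniformly that of a metric cone) for small $\rho$ and from the Euclidean-type volume growth of the conic ends for large $\rho$. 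The same local structure together with Buser's inequality gives a scale-invariant $L^2$ Poincar\'e inequality on all metric balls, equivalently the Faber--Krahn inequality $\lambda_1(\Omega)\ge c\,\mathrm{Vol}(\Omega)^{-2/n}$ for bounded open $\Omega\subset M$.

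Given these inputs the rest is standard. From the Faber--Krahn (equivalently Nash) inequality, a Nash-type iteration applied to $\|e^{t\Delta}f\|_2^2$ with $\|f\|_1=1$ yields the on-diagonal bound $H^M(t,z,z)\le C\,\mathrm{Vol}(B(z,\sqrt t))^{-1}\le C_1 t^{-n/2}$ uniformly in $z$; for $t$ in a bounded range this also follows from the local heat parametrix. The off-diagonal Gaussian factor is then produced by Davies's exponential-perturbation method: testing the heat equation against $u(t,\cdot)^2 e^{2\alpha\psi}$ for bounded $\psi$ with $|\nabla\psi|\le 1$ gives the Davies--Gaffney estimate $\int_{B_1}\int_{B_2}H^M(t,z,w)\,dz\,dw\le\sqrt{\mathrm{Vol}(B_1)\mathrm{Vol}(B_2)}\;e^{-d(B_1,B_2)^2/(4t)}$, and combining this with the on-diagonal bound via Grigor'yan's chaining argument upgrades it to the pointwise estimate $H^M(t,z,w)\le C_1 t^{-n/2}e^{-d(z,w)^2/(C_2 t)}$. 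Alternatively, following \cite{cly} more literally, one integrates the Li--Yau gradient estimate for positive solutions of the heat equation to get a parabolic Harnack inequality and feeds it, together with the volume lower bound, into the same chain.

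The main obstacle is the uniformity of the constants in $z$ together with the absence of any $e^{Ct}$ factor on the right-hand side. The latter is the delicate point: the naive Li--Yau estimate on a manifold with $\mathrm{Ric}\ge-(n-1)K$ loses a factor $e^{CKt}$, and to reach the clean form (\ref{gaussianub}) one must use that the curvature of an asymptotically conic manifold not only stays bounded but decays quadratically near infinity, $|\mathrm{sec}|=O(x^2)$, so that the curvature cost accumulated along the relevant space-time path stays bounded independently of $t$; seen from the other side, this is the scale-invariance of the Faber--Krahn constant that comes from the genuinely Euclidean volume growth $\mathrm{Vol}(B(z,\sqrt t))\asymp t^{n/2}$. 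Controlling all of these estimates uniformly across the transition region between the compact core and the conic ends, where one has neither exact homogeneity nor a single comparison model, is the step requiring the most care; it is handled by a covering/rescaling argument adapted to the collar $[0,\delta)_x\times N_y$.
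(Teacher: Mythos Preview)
The paper does not prove this theorem at all: it is stated with the attribution \cite{cly} and used as a black box (the only place it is invoked is at the end of Section~2.5, to upgrade the order at zf from $2$ to $n$). There is therefore no ``paper's own proof'' to compare against.

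Your sketch is a reasonable outline of the modern route to Gaussian upper bounds (volume doubling plus scale-invariant Faber--Krahn, Nash iteration for the on-diagonal bound, Davies--Gaffney plus Grigor'yan's argument for the off-diagonal factor), and you correctly flag the only nontrivial issue, namely getting a bound with no $e^{Ct}$ loss by exploiting the quadratic curvature decay and genuinely Euclidean volume growth of the conic end. That said, this is not what Cheng--Li--Yau actually do in \cite{cly}: their 1981 paper predates Davies's exponential-perturbation method and Grigor'yan's chaining, and proceeds instead via gradient estimates for positive solutions and direct comparison arguments. If your intent is to supply \emph{a} proof, your outline is fine; if your intent is to reproduce the proof of \cite{cly} as cited, you should follow the Li--Yau gradient-estimate route you mention as an alternative.
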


However, for many applications to spectral theory, one needs finer information about the structure of the heat kernel, often at long time. The example we have in mind is the definition of the zeta function. Recall that if $M$ is compact, the zeta function is defined for $\Re s>n/2$ by:
\begin{equation}\label{zetadefin} \zeta_M(s)=\frac{1}{\Gamma(s)}\int_0^{\infty}
(Tr H^M(t)-1)t^{s-1}\ dt.
\end{equation}
The zeta function has a well-known meromorphic continuation to all of $\mathbb C$ with a regular value at $s=0$; the key is that the trace of the heat kernel has a short-time asymptotic expansion, which, along with the long-time exponential decay, enables us to write down an explicit meromorphic continuation \cite{r}. The \emph{determinant of the Laplacian} is then given by $\exp(-\zeta'_M(0))$; the determinant plays a key role in many problems in spectral theory, including the isospectral compactness results of Osgood, Phillips, and Sarnak \cite{ops1,ops2,ops3}.

We would like to define such a zeta function and determinant when $M$ is asymptotically conic, with an eye towards applying these concepts to the spectral and scattering theory of asymptotically conic manifolds. There are several obstacles. First, the heat kernel is no longer trace class, so $Tr H^M(t)$ does not make sense. Instead, we define the \emph{renormalized heat trace} $^RTr H^M(t)$ to be the finite part at $\delta=0$ of the divergent asymptotic expansion in $\delta$ of
\begin{equation}\int_{x\geq\delta}H^M(t,z,z)\ dz.
\end{equation}
Details, including the existence of this divergent asymptotic expansion, may be found in Section 3. We then formally define the \emph{renormalized zeta function}:
\begin{equation}\label{rzetadefin} ^R\zeta_M(s)=\frac{1}{\Gamma(s)}\int_0^{\infty}\ ^RTr H^M(t)
t^{s-1}\ dt.
\end{equation}

However, to make sense of this definition and obtain a meromorphic continuation, we still need to understand the behavior of the renormalized trace - and hence of the heat kernel itself - as $t\rightarrow 0$ and $t\rightarrow\infty$. In particular, we need asymptotics in both the short and long time regimes.

The short-time behavior of the heat kernel on an asymptotically conic manifold is relatively well-understood. Short-time heat kernels may be analyzed using techniques from semiclassical analysis. In this approach, the goal is to develop a 'semiclassical functional calculus' containing the heat kernel, modeled on standard semiclassical techniques as developed, for example, in \cite{ds}. The key functional calculus for this purpose, at least in the asymptotically Euclidean setting, is the Weyl calculus of Hormander \cite{h}.

An alternate approach, and one more suited to analysis of the renormalized trace and determinant, is to use geometric microlocal analysis to first construct the heat kernel and then analyze its fine structure. The techniques of geometric microlocal analysis were first developed by Melrose and Mendoza to study elliptic PDE on manifolds with asymptotically cylindrical ends \cite{mem}. They have been extended by many other mathematicians and play a key role in the modern analysis of linear PDE on singular and non-compact spaces. In particular, in \cite{me3}, Melrose discusses some aspects of spectral and scattering theory on asymptotically conic manifolds. Albin, in \cite{alb}, uses these methods to investigate the short-time heat kernel on a variety of complete spaces, including asymptotically conic manifolds. His work can be used to obtain the fine structure that we need for the short-time heat kernel.

The long-time problem is trickier: in the asymptotically conic setting, we no longer have exponential decay of the heat kernel as $t\rightarrow\infty$. Indeed, from the structure of the Euclidean heat kernel and (\ref{gaussianub}), we expect that the leading-order behavior of $H^M(t,z,z')$ as $t\rightarrow\infty$ will be $Ct^{-n/2}$, and the leading-order behavior of the renormalized heat trace may be even worse. This lack of decay means that the zeta function may not be well-defined a priori for any $s$. We may split (\ref{rzetadefin}) into two integrals by breaking it up at $t=1$, but there is no obvious reason for the integral from $t=1$ to $\infty$ to have a meromorphic continuation to all of $\mathbb C$. In order to obtain such a meromorphic continuation, we need an asymptotic expansion for the heat kernel as $t\rightarrow\infty$. Moreover, we must understand how this expansion interacts with the heat trace renormalization.

\subsection{Main results}

We solve this problem by using the methods of geometric microlocal analysis to obtain a complete description of the asymptotic structure of the heat kernel on $M$ in all spatial and temporal regimes. The key concepts, including blow-ups and polyhomogeneous conormal functions, were originally introduced in \cite{me} and \cite{me2}, and a good introduction may be found in \cite{gri}.  In Section 2, we discuss these concepts briefly and then use them to define a new blown-up manifold with corners which we call $M^2_{w,sc}$. The space $M^2_{w,sc}$ was originally defined by Guillarmou and Hassell in \cite{gh1}, and we use their labeling of the boundary hypersurfaces; see Section 2 for the definitions. Our main theorem is the following:

\begin{theorem}\label{maincor} Let $M$ be asymptotically conic. For any $n\geq 2$, and for any fixed time $T>0$, the heat kernel on $M$ is polyhomogeneous conormal on $M^2_{w,sc}$ for $t>T$, where $w=t^{-1/2}$. The leading orders at the boundary hypersurfaces are at least 0 at sc and $n$ at each of bf$_0$, rb$_0$, lb$_0$, and zf, with infinite-order decay at lb, rb, and bf.
\end{theorem}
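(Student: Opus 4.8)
The plan is to construct the long-time heat kernel out of the low-energy resolvent of \cite{gh1} via a contour-integral representation, and to read off its polyhomogeneous structure by pushing forward along an auxiliary spectral parameter. First I would start from the spectral representation
\[
H^M(t) \;=\; \frac{1}{\pi}\int_0^\infty e^{-t\lambda}\,\Im\,(\Delta-\lambda-i0)^{-1}\,d\lambda,
\]
in which $(\Delta-\lambda-i0)^{-1}$ is the outgoing (limiting-absorption) resolvent, whose behaviour down to $\lambda=0$ is exactly what \cite{gh1} controls. Writing $\lambda=k^2$ and then rescaling $k=w\sigma$ with $w=t^{-1/2}$ turns this into
\[
H^M(t) \;=\; c\,w^{2}\int_{\Gamma'} e^{-\sigma^{2}}\,\mathrm{sp}(w\sigma)\,\sigma\,d\sigma,
\]
where $\mathrm{sp}(k)$ is the spectral measure at energy $k$ built from that resolvent, $\Gamma'$ is a fixed contour in the $\sigma$-plane lying in the sector where $e^{-\sigma^{2}}$ decays rapidly, and $c$ is a universal constant. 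The reason for the $t^{-1/2}$ rescaling is that, after it, the variable $w$ plays precisely the role of the low-energy spectral parameter used by Guillarmou and Hassell: $M^2_{w,sc}$ is their space with $k$ renamed $w$, and it was designed so that the resolvent, hence $\mathrm{sp}$, is polyhomogeneous conormal on it down to $w=0$.

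Next I would invoke the main construction of \cite{gh1}: $(\Delta-k^2)^{-1}$, and with it $\mathrm{sp}(k)$, is polyhomogeneous conormal on $M^2_{k,sc}$ with explicitly known leading orders. Schematically, $\mathrm{sp}(k)$ vanishes to order $n-2$ at zf and comparably at the zero-energy faces bf$_0$, lb$_0$, rb$_0$; it is of order $0$ at sc, where its incoming and outgoing parts carry oscillatory factors $e^{\pm ik/x}$ against conormal amplitudes; and at the positive-energy faces lb, rb, bf it is governed by the scattering asymptotics $e^{\pm ikr}r^{-(n-1)/2}(\cdots)$, with $r$ the distance to infinity. (The conormal singularity of $\mathrm{sp}$ along the diagonal need not be tracked: since $t>T>0$ the $\sigma$-integration below smooths it, reflecting the fact that $e^{-t\Delta}$ is smoothing.) Pulling back under $k=w\sigma$, a diffeomorphism onto its image for $\sigma\ne 0$, exhibits $\sigma\mapsto\mathrm{sp}(w\sigma)$ as a family of polyhomogeneous kernels on $M^2_{w,sc}$ depending on the parameter $\sigma$ with uniform index sets; here I use that the Guillarmou--Hassell resolvent is holomorphic in $\{\Im k>0\}$ with polynomially controlled growth, so $\Gamma'$ may be taken slightly into the upper half-plane.

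I would then integrate in $\sigma$. Since $\sigma e^{-\sigma^2}$ is entire and rapidly decaying, $\int_{\Gamma'}e^{-\sigma^2}\mathrm{sp}(w\sigma)\sigma\,d\sigma$ is the pushforward of a polyhomogeneous family under the (b-)fibration that forgets $\sigma$, so Melrose's pushforward theorem shows it is again polyhomogeneous conormal on $M^2_{w,sc}$; its index sets follow from those of $\mathrm{sp}$, the prefactor $w^2$, and the elementary convergence of $\int e^{-\sigma^2}\sigma^{m+1}\,d\sigma$ (the exponents $m\ge n-2\ge 0$ at the zero-energy faces cause no divergence). The combination $w^2\cdot(w\sigma)^{n-2}\cdot\sigma=w^n\sigma^{n-1}$ then produces leading order $n$ at zf, bf$_0$, lb$_0$, rb$_0$, while at sc the prefactor $w^2$ multiplies something already of order $0$ in the sc-defining function, giving order at least $0$. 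The infinite-order decay at lb, rb, bf emerges from the same integral: there $\mathrm{sp}(w\sigma)$ carries a factor $e^{\pm iw\sigma r}$ with $r\to\infty$, so completing the square, $-\sigma^2\pm iwr\sigma=-(\sigma\mp\tfrac{i}{2}wr)^2-\tfrac14 w^2r^2$, and shifting $\Gamma'$ accordingly (legitimate by the analyticity above) extracts the Gaussian prefactor $e^{-w^2r^2/4}=e^{-w^2/(4x^2)}$, which vanishes to infinite order at $x=0$. This is the microlocal refinement of the Cheng--Li--Yau bound of Theorem \ref{chengliyau}.

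The crux of the whole argument is this last step: making the pushforward rigorous uniformly up to every corner of $M^2_{w,sc}$, arranging the contour deformations that generate the infinite-order decay at lb, rb, bf while staying inside the region where the structure of \cite{gh1} is valid, and tracking index sets correctly where several faces meet. Setting up the right b-fibration near those corners -- or, if none is available, carrying out the uniform estimates by hand -- is where the real work lies. The remaining points are comparatively routine: the case distinctions in \cite{gh1} according to whether $\Delta$ has a zero-resonance or zero-eigenvalue only improve the orders, so the stated values are genuine lower bounds (hence ``at least''); for even $n$ the resolvent expansion acquires logarithmic terms, which enlarge index sets by powers of $\log$ but leave leading orders unchanged; and the borderline case $n=2$, where the zero-energy resolvent is merely logarithmically bounded rather than decaying, requires a brief separate treatment.
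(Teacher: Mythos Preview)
Your overall strategy---write the heat kernel as an integral of the spectral measure, rescale to make $w=t^{-1/2}$ the low-energy parameter, and push forward---is close to, but not the same as, the paper's. The paper integrates the \emph{off-spectrum} resolvent $(\Delta+e^{i\theta}k^2)^{-1}$ around a keyhole contour, whereas you integrate the \emph{limiting-absorption} resolvent (equivalently the spectral measure). The paper in fact mentions your route as an alternative at the end of its proof.

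The substantive gap is your claim that $\mathrm{sp}(k)=\Im(\Delta-k^2-i0)^{-1}$ vanishes to order $n-2$ at zf. This is not in \cite{gh1}: there the resolvent has leading order $0$ at zf (logarithmic when $n=2$), so taking an imaginary part gives, a priori, only order $0$. The improvement to order $n-2$ (equivalently, order $n-1$ for $k\,\Im R$) is exactly the cancellation between incoming and outgoing pieces proved later in Guillarmou--Hassell--Sikora \cite{ghs}; you would have to cite that, or reprove it. Without it your computation $w^2\cdot(w\sigma)^{n-2}\cdot\sigma=w^n\sigma^{n-1}$ collapses to $w^2$, and you get only order $2$ at zf. The paper runs into the same shortfall with its off-spectrum contour and repairs it by invoking the Cheng--Li--Yau Gaussian bound to force the leading order up to $n$; that is the step you are missing or replacing.

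A secondary point: by going to the spectrum you trade one difficulty for another. The off-spectrum resolvent already has infinite-order decay at lb, rb, bf, so the paper gets that for free; you instead have oscillatory behaviour $e^{\pm ikr}$ there and must deform the contour and complete the square. That argument is plausible but you have only sketched it, and making it uniform at the corners where lb, rb, bf meet bf$_0$ and sc is nontrivial---this is exactly where the paper spends most of its technical effort (its Lemma on pushing forward through the auxiliary $s$-integration, together with the blown-up spaces $S_1$, $S_2$ and the b-fibration checks). Finally, the $n=2$ case is not ``brief'': the paper devotes an entire section to redoing the \cite{gh1} parametrix construction because $0$ is an indicial root in two dimensions.
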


This theorem gives a complete description of the asymptotic structure of the heat kernel for long time; previously, only estimates such as Theorem \ref{chengliyau} were known. 
The analogous structure for the short-time heat kernel is well-understood (see Section 2 for the definition of $M^2_{sc}$):
\begin{theorem}\label{zshorttime} For $t<1$, the heat kernel
$H^{M}(t,z,z')$ is polyhomogeneous conormal on
\[[M_{sc}^{2}(z,z')\times[0,1]_{\sqrt t}; \{\sqrt t=0,z=z'\}].
\]
Moreover, there is infinite-order
decay at all faces except the scattering front face sc and the face F obtained
by the final blowup.
\end{theorem}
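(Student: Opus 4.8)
The plan is to construct the short-time heat kernel directly by a parametrix argument on the heat space $\mathcal{H} := [M^2_{sc}(z,z') \times [0,1]_{\sqrt t}; \{\sqrt t = 0, z = z'\}]$ appearing in the statement, following the philosophy of Melrose's construction of heat kernels on singular and non-compact spaces and of Albin's treatment of complete manifolds in \cite{alb}. First I would record the structure of $\mathcal{H}$. Its boundary hypersurfaces are the lifts of lb, rb, bf, and sc from $M^2_{sc}$, the lift of the temporal face $\mathrm{tf} = \{\sqrt t = 0\}$, the face $\{t = 1\}$ (across which the kernel is simply smooth), and the new front face $F$. Since the scattering blow-up arranges that the lifted diagonal $\Delta_{\mathrm{sc}}$ in $M^2_{sc}$ meets the boundary only at sc, the submanifold $\{\sqrt t = 0, z = z'\} = \Delta_{\mathrm{sc}} \times \{0\}$ is a p-submanifold of $M^2_{sc} \times [0,1]_{\sqrt t}$ lying inside tf, disjoint from lb, rb, and bf, and meeting sc cleanly; hence the blow-up is well-defined and $F$ meets only sc and tf among the pre-existing faces. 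Near the interior of $F$ projective coordinates $\tau = \sqrt t$, $Z = (z-z')/\tau$ are valid, while near the corner $F \cap \mathrm{sc}$ one uses the scattering projective coordinates on $M^2_{sc}$ in place of $z-z'$; in both regimes $\partial_t + \Delta_g$ lifts to a degenerate operator of heat type whose normal operator at $F$ is the flat heat operator on Euclidean $\mathbb{R}^n$ --- the same model in both cases, because the scattering metric is asymptotically Euclidean near sc.

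Next I would solve the model problem iteratively. The leading term at $F$ is the Euclidean heat kernel $(4\pi)^{-n/2} e^{-|Z|^2/4}$ (with the appropriate metric density factor) written in these blown-up coordinates; it reproduces the delta initial condition and annihilates the heat operator to leading order at $F$, and solving the induced transport equations along $F$ produces a full polyhomogeneous expansion there. Because the normal operator is the genuine Euclidean heat kernel, which is exactly Gaussian and hence of product form, this expansion is uniform up to and polyhomogeneous across the corner $F \cap \mathrm{sc}$. This yields a parametrix $G$, polyhomogeneous on $\mathcal{H}$ with leading order $-n$ at $F$, vanishing to infinite order at tf, and with $(\partial_t + \Delta_g)G = \delta - E$ where the error $E$ is polyhomogeneous on $\mathcal{H}$ and vanishes to infinite order at both $F$ and tf.

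Then I would correct the parametrix to an exact solution. Because $E$ vanishes to infinite order as $t \to 0$, the time-convolution powers $E^{*k}$ gain order in $t$ and the Volterra series $\sum_{k \geq 0} (-1)^k E^{*k}$ converges, so $H^M = G + G * \big( \sum_{k \geq 1} (-1)^k E^{*k} \big)$ solves the heat equation exactly. Verifying that these convolutions again land in the polyhomogeneous calculus on $\mathcal{H}$ requires introducing the appropriate triple space, checking that the three relevant double spaces arise from it by b-fibrations, and applying Melrose's pushforward theorem to control the composite index sets. Finally, infinite-order decay at lb, rb, and bf can be read off either by tracking index sets through the construction or, more cheaply, a posteriori: for $t < 1$ with $z'$ fixed, as $z$ approaches $\partial\bar M$ the geodesic distance $|z - z'|$ grows like a negative power of the boundary defining function $x$, so the Gaussian bound of Theorem \ref{chengliyau} forces $H^M(t,z,z') = O(x^\infty)$, which combined with the conormality already established upgrades to infinite-order decay at those faces.

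The step I expect to be the main obstacle is the composition argument --- constructing the triple heat space and confirming that convolution with the residual error preserves polyhomogeneity on $\mathcal{H}$ with the stated index sets --- together with the bookkeeping at the corner $F \cap \mathrm{sc}$, where the heat blow-up and the scattering blow-up must be shown to interact compatibly. What makes the short-time regime manageable, in contrast to Theorem \ref{maincor}, is that the residual error vanishes to infinite order at $t = 0$: the Volterra series then converges trivially, and the long-time difficulties --- absence of exponential decay, interaction with the weighted scattering structure at zero energy --- simply do not arise.
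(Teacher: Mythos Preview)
Your proposal is correct and takes essentially the same approach as the paper's Appendix: a parametrix on the heat space with the Euclidean heat kernel as the model at both $F$ and sc, followed by Albin's composition formula and a Volterra iteration. The only minor differences are presentational---the paper sets up the model at sc separately (normal operator $\partial_t+\Delta_{\mathbb{R}^n}$, model the full Euclidean heat kernel) and then checks consistency at the corner, whereas you reach sc only through $F\cap\mathrm{sc}$---and your suggestion to obtain the infinite-order decay at lb, rb, bf a posteriori from the Cheng--Li--Yau Gaussian bound, which is a shortcut the paper does not take.
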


Theorem \ref{zshorttime} follows immediately from the work of Albin \cite{alb}; however, the precise statement above does not appear in the literature. Therefore, in the Appendix, we give a simple proof using the machinery developed in \cite{alb}. Combining Theorem \ref{maincor} with Theorem \ref{zshorttime} gives a complete geometric-microlocal description of the structure of the heat kernel. This structure is illustrated in Figure \ref{hkac}; the short-time structure is the left-hand side of the diagram, and the long-time structure is the right-hand side. We also indicate the leading order of the heat kernel at each of the boundary hypersurfaces, in terms of $\sqrt t$ at $t=0$, $w=t^{-1/2}$ at $t=\infty$, and $x$ or $x'$ respectively at all the finite-time boundaries.

\begin{figure}
\centering
\includegraphics[scale=0.80]{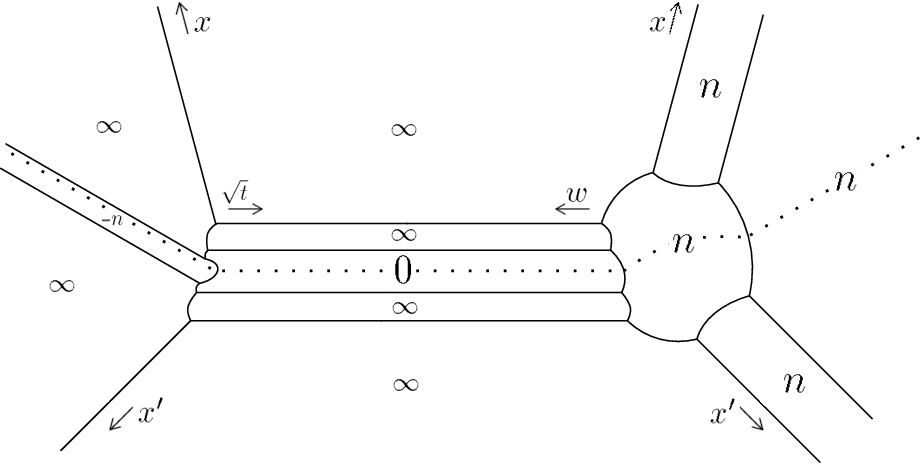}
\caption{Asymptotic structure of the heat kernel on $M$.}\label{hkac}
\end{figure}

As an example of how polyhomogeneous structure may be used to read off the behavior in all asymptotic regimes, let $a$ be a parameter, and fix $(y,x',y')$; consider the heat kernel $H^Z(a^2,a^{-1},y,x',y')$ as $a$ approaches infinity. In the compactified space in Figure \ref{hkac}, as $a$ approaches infinity, the arguments approach a point in the center of the face lb$_0$, where the leading order of the heat kernel is $n$. Since $w=a^{-1}$, and $w$ is a boundary defining function for lb$_0$, we conclude that $H^Z(a^2,a^{-1},y,x',y')$ has a polyhomogeneous asymptotic expansion in $a^{-1}$ as $a\rightarrow\infty$, with leading term $C_na^{-n}$. A similar analysis may be performed in any asymptotic regime.

As an application, Theorems \ref{maincor} and \ref{zshorttime} give us precisely the polyhomogeneous structure we need to define and investigate the renormalized zeta function on $M$:

\begin{theorem}\label{bigrenorm}  Let $M$ be asymptotically conic. The renormalized zeta function, defined formally by (\ref{rzetadefin}), is well-defined and has a meromorphic continuation to all of $\mathbb C$.
\end{theorem}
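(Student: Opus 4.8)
The plan is to adapt the classical argument for meromorphically continuing a heat zeta function: split the $t$-integral in (\ref{rzetadefin}) at $t=1$ and treat the two halves using, respectively, the short-time and the long-time structure of the heat kernel. Two features of the present setting force modifications. First, since $H^M(t)$ is not trace class, we work throughout with the renormalized trace ${}^R\mathrm{Tr}\, H^M(t)$, i.e.\ the finite part at $\delta=0$ of $\int_{x\ge\delta}H^M(t,z,z)\,dz$ (the existence of the relevant divergent expansion in $\delta$ is established in Section 3). Second, because the renormalized trace need not decay exponentially as $t\to\infty$, the two halves need not share a common half-plane of convergence; accordingly we \emph{define} ${}^R\zeta_M(s)$ to be the sum of the two separate meromorphic continuations. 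One then checks that this is independent of the splitting point and agrees with (\ref{rzetadefin}) wherever the latter converges, which is what it means for the zeta function to be well-defined.

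The analytic content is a pair of asymptotic expansions for the renormalized trace, which I would obtain by restricting the heat kernel to the spatial diagonal and pushing the resulting polyhomogeneous structure forward under integration over $\{x\ge\delta\}$, using the pushforward theorem for polyhomogeneous conormal distributions (see \cite{gri}). From Theorem \ref{zshorttime} this gives, as $t\to0^+$,
\[
{}^R\mathrm{Tr}\, H^M(t)\ \sim\ \sum_{j}a_j\,t^{\alpha_j}(\log t)^{p_j},\qquad \alpha_j\nearrow+\infty,\quad \alpha_0=-\tfrac n2,
\]
an expansion in $\sqrt t$; the one delicate point is that the finite-part-in-$\delta$ operation must be interchanged with the $t\to0$ asymptotics, which is legitimate because Theorem \ref{zshorttime} provides \emph{joint} polyhomogeneity of the diagonal heat kernel in $(x,\sqrt t)$ near spatial infinity. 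Running the same argument with Theorem \ref{maincor} in place of Theorem \ref{zshorttime}, and using that the leading orders of the heat kernel on $M^2_{w,sc}$ are $n$ at zf, bf$_0$, lb$_0$, rb$_0$ and at least $0$ at sc, produces an expansion as $t\to\infty$ in powers of $w=t^{-1/2}$,
\[
{}^R\mathrm{Tr}\, H^M(t)\ \sim\ \sum_{j}b_j\,t^{-\gamma_j}(\log t)^{q_j},\qquad \gamma_j\nearrow+\infty,
\]
with some leading exponent $\gamma_0$, the renormalization in $\delta$ again commuting with the $w$-expansion. I expect this last compatibility — the interaction of the diagonal pushforward with the finite-part operation near the corners at spatial infinity — to be the main obstacle, and it is precisely the point flagged in the introduction.

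Granting the two expansions, the continuation is routine. For $\Re s>n/2$ the integral $\int_0^1{}^R\mathrm{Tr}\, H^M(t)\,t^{s-1}\,dt$ converges absolutely; writing ${}^R\mathrm{Tr}\, H^M(t)=\sum_{\alpha_j<N}a_j\,t^{\alpha_j}(\log t)^{p_j}+O(t^N)$ and using $\int_0^1 t^{\alpha+s-1}(\log t)^p\,dt=(-1)^p\,p!\,(s+\alpha)^{-(p+1)}$, each expansion term contributes an explicit meromorphic function of $s$, and the remainder integral is holomorphic for $\Re s>-N$; letting $N\to\infty$ continues $\int_0^1$ meromorphically to $\mathbb C$, with poles contained in $\{-\alpha_j\}_j\subset(-\infty,n/2]$. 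Similarly, for $\Re s<\gamma_0$ the integral $\int_1^\infty{}^R\mathrm{Tr}\, H^M(t)\,t^{s-1}\,dt$ converges, and substituting the long-time expansion and using $\int_1^\infty t^{-\gamma+s-1}(\log t)^q\,dt=(-1)^{q+1}\,q!\,(s-\gamma)^{-(q+1)}$ for $\Re s<\gamma$ — the right side being meromorphic in $s$ — continues $\int_1^\infty$ meromorphically to $\mathbb C$. Since $1/\Gamma(s)$ is entire, the sum ${}^R\zeta_M(s)$ of the two continuations is meromorphic on all of $\mathbb C$, as claimed. (As in the compact case, $1/\Gamma(s)$ cancels the simple poles produced by $\alpha_j=0$ and by $\gamma_j$ a non-positive integer; double poles accompanying logarithmic terms need not cancel, but meromorphy — the only assertion of the theorem — holds in any event.)
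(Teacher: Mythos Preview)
Your proposal is correct and follows essentially the same route as the paper: restrict the heat kernel to the diagonal, use the joint polyhomogeneity from Theorems \ref{maincor} and \ref{zshorttime} together with the pushforward theorem to show that the renormalized trace is polyhomogeneous at $t=0$ and $t=\infty$ (packaged in the paper as Theorem \ref{totalrenorm}), then split the $t$-integral at $t=1$ and continue each half explicitly. The paper makes the commutation of the finite-part-in-$\delta$ with the $t$-asymptotics concrete by lifting the cut-off integrand to $X_b^3(w,x,\delta)$ for long time and to $\mathbb R_+(\sqrt t)\times X_b^2(x,\delta)$ for short time before pushing forward in $x$, which is exactly the joint-polyhomogeneity step you sketch.
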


We may then define the \emph{renormalized determinant} of the Laplacian on $M$ by
\[\log(^R\det\Delta_M)=-^R\zeta'_M(0),\]
where $^R\zeta'_M(0)$ is the coefficient of $s$ in the Laurent series for $^R\zeta_M(s)$ around $s=0$.

In a companion paper \cite{s2}, we use Theorems \ref{maincor}, \ref{zshorttime}, and \ref{bigrenorm} to analyze the behavior of the determinant of the Laplacian on a family of manifolds degenerating to a manifold with conical singularities. We expect that this work will have applications to spectral theory and to index theory on singular spaces, including the study of the Cheeger-M\"uller theorem on manifolds with conical singularities. The key theorem from \cite{s2} is as follows: let $\Omega_0$ be a manifold with an exact conic singularity (with arbitrary base) and let $Z$ be a manifold conic near infinity with the same base. For each $\epsilon>0$, we define a smooth manifold $\Omega_{\epsilon}$ replacing the tip of $\Omega_0$ with an $\epsilon$-scaled copy of $Z$; as $\epsilon\rightarrow 0$, the manifolds $\Omega_{\epsilon}$ converge to $\Omega_0$ in the Gromov-Hausdorff sense. Then it is proven in \cite{s2} that
\begin{theorem}\label{detapprox}
As $\epsilon\rightarrow 0$,
\[\log\det\Delta_{\Omega_{\epsilon}}= -2\log\epsilon(^{R}\zeta_{Z}(0))
+\log\det\Delta_{\Omega_{0}}+\log^{R}\det\Delta_{Z}+o(1).\] \end{theorem}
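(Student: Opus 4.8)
The plan is to start from $\log\det\Delta_{\Omega_\epsilon}=-\zeta'_{\Omega_\epsilon}(0)$ and to build the heat kernel on $\Omega_\epsilon$ by a two-region parametrix. By construction, away from a neighbourhood of the former tip $\Omega_\epsilon$ is isometric to $\Omega_0$ with a shrinking neighbourhood of its singular point removed, while near the centre it is isometric to the rescaled space $\epsilon Z$; the two descriptions overlap on an annular region where both metrics agree with the exact cone $dr^2+r^2h_0$. Fix cutoffs $\chi_{\mathrm{int}}+\chi_{\mathrm{ext}}=1$ subordinate to this splitting, together with slightly larger $\tilde\chi_{\mathrm{int}},\tilde\chi_{\mathrm{ext}}$ equal to $1$ on the supports of $\chi_{\mathrm{int}},\chi_{\mathrm{ext}}$, and set
\[
G(t)=\tilde\chi_{\mathrm{int}}\,H^{\epsilon Z}(t)\,\chi_{\mathrm{int}}+\tilde\chi_{\mathrm{ext}}\,H^{\Omega_0}(t)\,\chi_{\mathrm{ext}},
\]
where $H^{\Omega_0}$ is the heat kernel of the Friedrichs Laplacian on the conic manifold $\Omega_0$. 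Then $G(0)=\operatorname{Id}$ and $(\partial_t+\Delta_{\Omega_\epsilon})G=\operatorname{Id}-E(t)$, with $E(t)$ built from commutators of $\Delta$ with the cutoffs and hence supported in the fixed-geometry transition annulus; a Duhamel/Neumann-series argument gives $H^{\Omega_\epsilon}=G+G\ast\sum_{k\ge1}E^{\ast k}$, so that
\[
\operatorname{Tr}H^{\Omega_\epsilon}(t)=\operatorname{Tr}\bigl(\chi_{\mathrm{int}}H^{\epsilon Z}(t)\tilde\chi_{\mathrm{int}}\bigr)+\operatorname{Tr}\bigl(\chi_{\mathrm{ext}}H^{\Omega_0}(t)\tilde\chi_{\mathrm{ext}}\bigr)+\mathrm{Err}(t,\epsilon).
\]

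Next I would identify the limits of the two principal terms. Because the conic heat kernel $H^{\Omega_0}$ is trace class and integrable up to the singular point, $\operatorname{Tr}(\chi_{\mathrm{ext}}H^{\Omega_0}(t)\tilde\chi_{\mathrm{ext}})\to\operatorname{Tr}H^{\Omega_0}(t)$ as $\epsilon\to0$, with convergence uniform on compact subsets of $(0,\infty)$ and controlled as $t\to0$ (conic short-time expansion) and $t\to\infty$ (spectral gap). For the interior term, rescaling the isometry $\epsilon Z\cong Z$ turns the heat operator at time $t$ into the heat operator on $Z$ at time $t/\epsilon^2$, and in $Z$-coordinates the cutoff $\chi_{\mathrm{int}}$ excludes a neighbourhood of infinity whose radius tends to $\infty$ as $\epsilon\to0$; consequently
\[
\operatorname{Tr}\bigl(\chi_{\mathrm{int}}H^{\epsilon Z}(t)\tilde\chi_{\mathrm{int}}\bigr)={}^{R}\operatorname{Tr}H^{Z}(t/\epsilon^2)+o(1),
\]
the identification with the renormalized trace of Section 3, and the uniformity of the $o(1)$ in $t$, being exactly what Theorems \ref{maincor} and \ref{zshorttime} provide — the long-time polyhomogeneous structure of $H^Z$ (leading order $n$ at $\mathrm{bf}_0,\mathrm{rb}_0,\mathrm{lb}_0,\mathrm{zf}$, infinite-order decay at $\mathrm{lb},\mathrm{rb},\mathrm{bf}$) controls the regime $t/\epsilon^2\to\infty$, and the short-time structure controls the complementary regime.

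I would then assemble the zeta function. In the integral defining $\zeta_{\Omega_\epsilon}(s)$ the exterior term contributes $\zeta_{\Omega_0}(s)+o(1)$; in the interior term the substitution $u=t/\epsilon^2$ gives $t^{s-1}\,dt=\epsilon^{2s}u^{s-1}\,du$ and hence a contribution $\epsilon^{2s}\,{}^{R}\zeta_Z(s)+o(1)$, using Theorem \ref{bigrenorm} for the continuation of ${}^{R}\zeta_Z$; and the error term is shown to be holomorphic near $s=0$ with $o(1)$ size there. Thus $\zeta_{\Omega_\epsilon}(s)=\zeta_{\Omega_0}(s)+\epsilon^{2s}\,{}^{R}\zeta_Z(s)+o(1)$ on a fixed punctured neighbourhood of $s=0$, together with the $s$-derivative. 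Differentiating at $s=0$ and using $\tfrac{d}{ds}\big|_{0}\bigl(\epsilon^{2s}\,{}^{R}\zeta_Z(s)\bigr)=2\log\epsilon\cdot{}^{R}\zeta_Z(0)+{}^{R}\zeta_Z'(0)$ yields
\[
\log\det\Delta_{\Omega_\epsilon}=\log\det\Delta_{\Omega_0}-2\log\epsilon\cdot{}^{R}\zeta_Z(0)+\log{}^{R}\det\Delta_Z+o(1),
\]
the simple zero eigenvalues of $\Delta_{\Omega_\epsilon}$ and $\Delta_{\Omega_0}$ being removed throughout in the usual way.

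I expect two delicate points. The first is making the error estimate uniform across all time scales and, crucially, holomorphic with $o(1)$ bounds in a neighbourhood of $s=0$: the transition-region errors are off-diagonal and exponentially small for small $t$, but for the interior piece one must follow them through the rescaling into the long-time regime on $Z$, where the full strength of Theorem \ref{maincor} — rather than the Gaussian bound of Theorem \ref{chengliyau} — is needed to keep them negligible after integration against $t^{s-1}\,dt$. The second is the behaviour at $s=0$ itself: for an arbitrary base both $\zeta_{\Omega_0}(s)$ and ${}^{R}\zeta_Z(s)$ may individually have a pole there, whereas $\zeta_{\Omega_\epsilon}(s)$ is regular at $s=0$ since $\Omega_\epsilon$ is smooth and compact, so one must show that the $\epsilon$-independent polar parts of $\zeta_{\Omega_0}$ and of $\epsilon^{2s}\,{}^{R}\zeta_Z$ cancel, and extract $\det\Delta_{\Omega_0}$ and ${}^{R}\det\Delta_Z$ as the corresponding finite parts. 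This matching is the heart of the argument, and it is also why one obtains only $o(1)$ rather than a power of $\epsilon$: the limit $s\to0$ degrades the rate.
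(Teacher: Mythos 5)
First, a point of comparison: this paper does not actually prove Theorem \ref{detapprox}. It is quoted from the companion paper \cite{s2}, and the present paper only supplies the ingredients (Theorems \ref{maincor}, \ref{zshorttime}, \ref{bigrenorm}, and the conic computations of Section 3). So there is no in-paper proof to measure your sketch against; I can only assess it on its own terms. Your overall strategy --- split the heat trace of $\Omega_\epsilon$ into an exterior piece modeled on $\Omega_0$ and an interior piece modeled on the rescaled $Z$, pass to the zeta function, and read off the $\log\epsilon$ term from $\frac{d}{ds}\big|_{s=0}\epsilon^{2s}\,{}^R\zeta_Z(s)$ --- is the natural one and is in the spirit of \cite{s2}, which, however, works at the level of the full heat kernel (a construction uniform in $\epsilon$) rather than a trace-level Duhamel argument.

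The genuine gap is the step $\operatorname{Tr}(\chi_{\mathrm{int}}H^{\epsilon Z}(t)\tilde\chi_{\mathrm{int}})={}^R\operatorname{Tr}H^Z(t/\epsilon^2)+o(1)$. After rescaling, the left side is the integral of $H^Z(t/\epsilon^2,z,z)$ over a ball of radius $\rho(\epsilon)/\epsilon$ in $Z$, and by Theorem \ref{divexpansion} this equals ${}^R\operatorname{Tr}H^Z(t/\epsilon^2)$ \emph{plus} the divergent terms $\sum_k f_k(t/\epsilon^2)\delta^{k-n}+f_{\log}\log\delta$ with $\delta=\epsilon/\rho(\epsilon)$, plus a remainder $R(\delta,t/\epsilon^2)$. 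These extra terms are not $o(1)$: one computes $f_k(t/\epsilon^2)\delta^{k-n}=a_k\,t^{(k-n)/2}\rho(\epsilon)^{n-k}$, which for a fixed transition radius reproduces exactly the small-time conic heat coefficients of the tip region that your exterior term omits, and $f_{\log}\log\delta=-\bigl(\int_N u_n\bigr)\log(\epsilon/\rho(\epsilon))$ is a genuine $\log\epsilon$ term whose fate under the Mellin transform must be tracked, not absorbed into $o(1)$: the bookkeeping of precisely these quantities, against the missing tip contribution on the $\Omega_0$ side and against the polar parts of $\zeta_{\Omega_0}$ and ${}^R\zeta_Z$ at $s=0$ (which you flag but defer), \emph{is} the content of the theorem, and dropping them invalidates the coefficient computation as written. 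Two further problems compound this. The remainder $R(\delta,t)$ in Theorem \ref{divexpansion} is controlled only for each fixed $t$, whereas you must integrate it against $t^{s-1}\,dt$ over all $t$ (equivalently over all $t/\epsilon^2$); that requires the joint polyhomogeneity in $(\delta,w)$ of Theorem \ref{totalrenorm} (kernel-level structure), not pointwise-in-$t$ statements. And your cutoff setup is internally inconsistent: with a transition annulus at fixed radius the exterior term does not converge to $\operatorname{Tr}H^{\Omega_0}(t)$ (it misses an $O(1)$ tip contribution), while with a shrinking annulus the Duhamel error terms no longer live in fixed geometry and themselves need estimates uniform in $\epsilon$ and $t$. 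None of this says the strategy is wrong, but as written the argument does not yield the stated formula.
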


\subsection{Outline of the proofs}
The usual geometric-microlocal approach to the fine structure of the heat kernel is a direct parametrix construction, which involves the construction of an initial approximation to the heat kernel and then the removal of the error via a Neumann series argument. This is the method adopted in \cite{alb}. However, parametrix constructions are not well-suited for analysis of the long-time heat kernel; the problem is global rather than local. In order to obtain the asymptotic structure of the heat kernel at long time, we instead take an indirect approach. Recall that the functional calculus shows that the heat kernel and the resolvent are related by
\begin{equation}\label{relation}H^{M}(t)=\frac{1}{2\pi i}\int_{\Gamma}e^{\lambda t}(\Delta_{M}+\lambda)^{-1}\ d\lambda,\end{equation}
where $\Gamma$ is a contour around the spectrum and $\Delta_M$ is the positive Laplacian.
In a series of papers \cite{gh1,gh2}, Guillarmou and Hassell have analyzed the asymptotic structure of the resolvent $(\Delta_M+\lambda)^{-1}$ at low energy, again giving a complete description in all regimes. They have shown:

\begin{theorem}\label{ghlow}\cite{gh1}
Suppose that $M$ is an asymptotically conic manifold of dimension $n\geq 3$. Then
the Schwartz kernel of $(\Delta_M+e^{i\theta}k^2)^{-1}$ is polyhomogeneous conormal on $M^2_{k,sc}$ for each $\theta\in(-\pi,\pi)$, with a conormal singularity at the spatial diagonal and all coefficients smoothly depending on $\theta$. It decays to infinite order at the faces lb, rb, and bf, with leading orders at sc, bf$_0$, rb$_0$, lb$_0$ and zf given by $0$, $n-2$,
$n-2$, $n-2$, and $0$ respectively.\end{theorem}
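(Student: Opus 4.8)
The plan is to follow the geometric-microlocal parametrix strategy of Guillarmou and Hassell: build an approximate resolvent directly on $M^2_{k,sc}$ as a polyhomogeneous conormal kernel with the prescribed leading orders, and then remove the error by a Neumann series carried out inside the same calculus. First I would recall the construction of $M^2_{k,sc}$ as an iterated blow-up of $\overline M\times\overline M\times[0,k_0)_k$: starting from the $b$-double space $M^2_b$ crossed with $[0,k_0)$, one blows up the zero-energy corner to create the faces bf$_0$, lb$_0$, rb$_0$ and zf, and then performs a scattering-type blow-up near the diagonal at $k>0$ to create sc together with the lifted diagonal. The point of these blow-ups is that on each boundary hypersurface the operator $\Delta_M+e^{i\theta}k^2$ acquires a well-defined model (normal operator): near the lifted diagonal and at sc the model is the scattering-pseudodifferential parametrix of $\Delta_M+e^{i\theta}k^2$, constructed uniformly in $k$, which is conormal of order $-2$ at the diagonal and of order $0$ at sc; at bf$_0$ the model is the resolvent of the exact metric cone $(C(N),\,dr^2+r^2h_0)$, whose Green's function decays like $r^{2-n}$ and therefore forces leading order $n-2$ at bf$_0$, lb$_0$, rb$_0$; and at zf the model is the zero-energy operator $\Delta_M$ itself, which for $n\ge 3$ is invertible from $L^2$ onto a suitable weighted space (no $L^2$-harmonic functions, non-parabolicity of $M$), with Green's function polyhomogeneous on $M^2_b$ of order $0$ in the interior of zf and $n-2$ at the $k=0$ faces.

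Next I would patch these models into a global parametrix $G(\theta,k)$: solve the model equation at each face, use the corner values as matching data, and by an asymptotic-summation (Borel) argument on manifolds with corners assemble a single kernel with the required index sets such that $(\Delta_M+e^{i\theta}k^2)\,G(\theta,k)=\mathrm{Id}-E(\theta,k)$, where the error $E$ has no conormal singularity, vanishes to positive order at zf, bf$_0$, lb$_0$, rb$_0$, and decays rapidly at lb, rb, bf. Thus $E$ lies in a residual sub-calculus which is closed under composition and for which the operator norm of $E(\theta,k)$ tends to $0$ as $k\to 0$; the composition statement is the technical heart of the construction, proved by lifting to the triple space $M^3_{k,sc}$, checking that the three stretched projections are $b$-fibrations, applying Melrose's pushforward theorem, and keeping track of index sets so that they do not accumulate. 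Because for $n\ge 3$ there is no zero-energy resonance, $\mathrm{Id}-E(\theta,k)$ is invertible for small $k$, with $(\mathrm{Id}-E)^{-1}=\mathrm{Id}+S(\theta,k)$ and $S=\sum_{j\ge 1}E^j$ converging in the residual calculus; all the model problems, and hence $E$ and $S$, depend smoothly on $\theta\in(-\pi,\pi)$, which stays off the spectrum.

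Finally, $(\Delta_M+e^{i\theta}k^2)^{-1}=G(\theta,k)+G(\theta,k)\,S(\theta,k)$, and one further application of the composition theorem shows that the right-hand side is polyhomogeneous conormal on $M^2_{k,sc}$. The correction $G\cdot S$ has index sets at each face at least as good as those of $G$, so it does not disturb the leading behavior, and the claimed orders $0$ at sc, $n-2$ at bf$_0$, rb$_0$, lb$_0$, $0$ at zf, and rapid decay at lb, rb, bf are exactly those of the parametrix; the conormal singularity at the lifted diagonal likewise comes from $G$, and the smooth dependence on $\theta$ is inherited.

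I expect the main obstacle to be twofold. The structural part is designing the blow-ups so that the scattering model near the diagonal, the cone model at bf$_0$, and the zero-energy model at zf are mutually compatible at all the corners, and checking that the normal operators glue; the analytic part, which is where most of the work goes, is the composition theorem on $M^2_{k,sc}$, i.e.\ proving that the class of kernels with the stated index sets is closed under composition, since this forces one to analyze the triple space $M^3_{k,sc}$ and to control the propagation of index sets under pushforward. The restriction $n\ge 3$ enters precisely at the zero-energy model: in dimension $2$ the Laplacian on $M$ is parabolic, its resolvent carries logarithmic terms and a resonance at $k=0$, and the clean polyhomogeneous statement above must be replaced by a Grushin-problem modification.
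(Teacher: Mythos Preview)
Your proposal is correct and follows the same approach as Guillarmou--Hassell \cite{gh1}; the paper cites this theorem from \cite{gh1} rather than proving it, but reproduces the identical strategy in Section~4 for the $n=2$ extension, and your outline (compatible models at sc, bf$_0$, zf, rb$_0$; composition rule on $M^2_{k,sc}$; Neumann series to remove the error) matches that argument closely. One small correction: the error $E$ is only guaranteed to have \emph{nonnegative} leading order at rb$_0$, and it is $E^2$ that vanishes to positive order at every face, so the Neumann series is summed asymptotically (Borel) rather than in operator norm.
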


Note that Guillarmou and Hassell require $n\geq 3$. In Section 4,
we adapt the methods of \cite{gh1} to extend Theorem \ref{ghlow} to the two-dimensional case:
\begin{theorem}\label{dslow} Theorem \ref{ghlow} also holds when $n=2$; all the leading orders are the same, except that we have logarithmic growth instead of order 0 at zf. \end{theorem}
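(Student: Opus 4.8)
The plan is to follow the strategy of Guillarmou--Hassell in \cite{gh1} essentially verbatim, tracking which steps genuinely use the hypothesis $n \geq 3$ and modifying only those. Recall that the Guillarmou--Hassell construction builds the low-energy resolvent as a polyhomogeneous conormal kernel on $M^2_{k,sc}$ by: (i) constructing a first parametrix whose kernel is supported near the scattering front face sc and the spatial diagonal, using the symbol calculus for scattering pseudodifferential operators; (ii) solving away the error at the zero-energy face zf using the structure of the zero-energy resolvent $\Delta_M^{-1}$, which requires understanding the inverse Laplacian acting between weighted spaces; (iii) solving away the error at the left and right boundary faces lb$_0$, rb$_0$ and the "b-face" bf$_0$ using the indicial roots of the operator at those faces; and (iv) a final Neumann series / Fredholm argument to remove the remaining error to infinite order. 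The only place dimension enters essentially is in step (ii)--(iii): in dimension $n \geq 3$, the Laplacian on $M$ is invertible on appropriate weighted $L^2$ spaces and the Green's function $\Delta_M^{-1}$ behaves like $r^{2-n}$ near infinity, giving the leading order $n-2$ at bf$_0$, lb$_0$, rb$_0$ and order $0$ at zf. In $n = 2$ the zero-energy Laplacian fails to be invertible in the same way — the relevant model operator on the exact cone has a logarithmic indicial root — and the Green's function behaves like $\log r$ rather than a negative power.

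First I would recall in detail the model computation on $\mathbb{R}^2$ (or on an exact cone over $(N,h_0)$): the Laplacian $\Delta = -\partial_r^2 - r^{-1}\partial_r + r^{-2}\Delta_N$, and its zero-energy null space / inverse. On the spherical harmonic decomposition, the $\ell = 0$ mode produces solutions $1$ and $\log r$; the positive modes produce $r^{\pm\mu_j}$ as in higher dimensions. This is exactly where $0 = n - 2$ is replaced by the logarithm. The effect on $M^2_{k,sc}$ is that the model resolvent kernel at the zf face acquires a $\log$ term, so the leading behavior at zf is $\log$-growth rather than order $0$, while the orders at bf$_0$, lb$_0$, rb$_0$ remain literally $n - 2 = 0$ (but now realized with possible $\log$ factors in the polyhomogeneous expansion, which are permitted in the definition of polyhomogeneous conormal). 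I would then check that every subsequent step of \cite{gh1} — the Neumann series convergence, the composition arguments, the mapping properties on the blown-up space — goes through unchanged, since those arguments are insensitive to the precise indicial roots and only require that the leading orders be nonnegative and the error improve at each stage, which is still the case.

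The key technical point to verify carefully is the solvability at the zero-energy face. In \cite{gh1} one needs, at each stage of the iteration, to invert (modulo lower-order error) a model operator on the relevant boundary face; in $n = 2$ this model operator is the exact-conic Laplacian, and I must show that it has the mapping properties needed to solve away the error term, now landing in a space that allows a logarithmic growth term. Concretely, I would set up the relevant weighted b-Sobolev or conormal spaces on the cone, identify the obstruction (a one-dimensional cokernel coming from the constant function, dual to the $\log$), and show the error produced by the parametrix is orthogonal to it or can be corrected by including the $\log$ term. This is the standard phenomenon in b-calculus when an indicial root sits at the threshold of the weight. I expect this to be the main obstacle: getting the functional-analytic setup right so that the Guillarmou--Hassell iteration closes, and tracking precisely where and with what coefficient the logarithm appears, without having to rebuild their entire parametrix from scratch.

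A secondary point worth care is consistency with the $k \to 0$ limit and with the resolvent identity: one should check that the logarithmic growth at zf is compatible with the known low-energy expansion of the resolvent on $\mathbb{R}^2$ (where $(\Delta + k^2)^{-1}$ famously has a $\log k$ singularity as $k \to 0$), which serves as a useful sanity check and also pins down the normalization. With these modifications in place, the statement of Theorem \ref{dslow} follows by repeating the proof of Theorem \ref{ghlow} with the $n-2$ at zf replaced by $\log$, all other orders and the conormal diagonal singularity being identical.
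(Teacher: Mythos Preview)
Your proposal is correct and follows essentially the same route as the paper: rerun the Guillarmou--Hassell parametrix construction, observe that the only obstruction is at zf where the indicial root $\nu_0=0$ (equivalently, the $\log r$ solution on the exact cone) forces a $\log k$ leading term, verify via the $\mathbb{R}^2$ model that the correct ansatz at zf is $G_{zf}^{0,1}\log k + G_{zf}^{0}$, and then check that the Neumann-series iteration closes with unchanged orders at the remaining faces. The paper makes this concrete by building the right inverse $Q_b^{-\delta}$ for $P_b$ on $x^{-\delta}L^2_b$ (using the maximum principle for surjectivity), explicitly matching the Bessel-function asymptotics of $Q_{bf_0}$ at the zf$\cap$bf$_0$ corner to fix the $\log x'$ and constant corrections, and constructing the $K_0$-based model at rb$_0$; your outline anticipates exactly these ingredients.
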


In Section 2, we use geometric microlocal analysis, in particular Melrose's pushforward theorem, to prove Theorem \ref{maincor}. The key is to push the structure of Theorems \ref{ghlow} and \ref{dslow} through the contour integral (\ref{relation}). To state the main technical theorem, we must first compactify $M^2_{k,sc}$ to $\bar M^2_{k,sc}$ by introducing a new boundary face at $k=\infty$, with boundary defining function $k^{-1}$. In a neighborhood of the new face, which we call tf, $\bar M^2_{k,sc}$ is $M^2_{sc}\times[0,1)_{k^{-1}}$. The main technical theorem is:

\begin{theorem}\label{mainthm} Let $M$ be an asymptotically conic manifold, and let $E$ be a vector bundle over $M$. Let $A:C^\infty(E)\rightarrow C^{-\infty}(E)$ be a pseudodifferential operator with the following properties:

a) $\sigma(A)\subset[0,\infty]$;

b) (Low energy resolvent behavior) For $k$ bounded above, the Schwartz kernel of the resolvent $(A+e^{i\theta}k^2)^{-1}$ is polyhomogeneous conormal on $M^2_{k,sc}$ for each $\theta\in(-\pi,\pi)$, with a conormal singularity at the spatial diagonal and all coefficients smoothly depending on $\theta$. Moreover, it decays to infinite order at the faces lb, rb, and bf, with index sets at sc, bf$_0$, rb$_0$, lb$_0$, and zf given by $R_{sc}$, $R_{bf_0}$,
$R_{rb_0}$, $R_{lb_0}$, and $R_{zf}$ respectively.

c) (High energy resolvent behavior) For each $\theta\in(-\pi,\pi)$ and for $k$ bounded below, the Schwarz kernel of $(A+e^{i\theta}k^2)^{-1}$ is phg conormal on $\bar M^2_{k,sc}$, with infinite-order decay at lb, rb, and bf, index set $R_{sc}$ at sc, and index set $R_{tf}$ at tf.

Then for $t$ greater than any fixed $T>0$, the kernel of $e^{-tA}$ is polyhomogeneous conormal on $M^2_{w,sc}$, where $w=t^{-1/2}$. It decays to infinite order at lb, rb, and bf, and has
index sets at sc, bf$_0$, rb$_0$, lb$_0$, and zf which are subsets of $R_{sc}$, $R_{bf_0}+2$, $R_{rb_0}+2$, $R_{lb_0}+2$, and $R_{zf}+2$ respectively.
\end{theorem}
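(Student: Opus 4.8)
The plan is to realize $e^{-tA}$ as a contour integral of the resolvent and to transport the structure of hypotheses (b) and (c) through that integral using Melrose's pushforward theorem. Since $\sigma(A)\subset[0,\infty]$, the resolvent $(A+\lambda)^{-1}$ is holomorphic for $\lambda\notin(-\infty,0]$ and
\[
e^{-tA}=\frac{1}{2\pi i}\int_\Gamma e^{t\lambda}(A+\lambda)^{-1}\,d\lambda,
\]
with $\Gamma$ a Hankel-type contour encircling $(-\infty,0]$. I would fix $\theta_0\in(\pi/2,\pi)$ and deform $\Gamma$ to the two rays $\{\lambda=e^{\pm i\theta_0}k^2:k>0\}$, letting the small arc about $\lambda=0$ shrink away (permissible by the control on the kernel of $(A+\lambda)^{-1}$ near $\lambda=0$ from (b)). Substituting $\lambda=e^{\pm i\theta_0}k^2$ and then rescaling $\kappa=k/w=k\sqrt t$ — so that $t\lambda=e^{\pm i\theta_0}\kappa^2$, the resolvent energy is $k_{\mathrm{res}}=w\kappa$, and $d\lambda=2e^{\pm i\theta_0}w^2\kappa\,d\kappa$ — gives
\[
e^{-tA}=\frac1\pi\,\mathrm{Im}\int_0^\infty \exp\!\big(e^{i\theta_0}\kappa^2\big)\,\big(A+e^{i\theta_0}w^2\kappa^2\big)^{-1}\,2e^{i\theta_0}\,w^2\kappa\,d\kappa,
\]
in which the weight $\exp(e^{i\theta_0}\kappa^2)$ is Schwartz in $\kappa$ because $\cos\theta_0<0$.

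Second, I would split the integrand by $1=\chi(w\kappa)+(1-\chi(w\kappa))$, $\chi\equiv1$ near $0$, $\operatorname{supp}\chi\subset\{k_{\mathrm{res}}\le2\}$. On the support of $1-\chi$ one has $\kappa\ge1/w$, so $|\exp(e^{i\theta_0}\kappa^2)|\le e^{(\cos\theta_0)/w^2}$; combined with the polynomial-in-$k_{\mathrm{res}}$ bounds on the high-energy resolvent and its derivatives coming from polyhomogeneity on the \emph{compact} space $\bar M^2_{k,sc}$ in (c), this high-energy term is $O(w^\infty)$ uniformly, with all derivatives, and is smooth across the diagonal since $t\ge T>0$; hence it is polyhomogeneous conormal on $M^2_{w,sc}$ with infinite-order decay at every boundary hypersurface and does not affect the asserted index sets. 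It remains to treat the low-energy term, in which $k_{\mathrm{res}}=w\kappa\le2$.

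The core of the proof is the pushforward. I would construct a manifold with corners $\mathcal D$ carrying two b-fibrations $\pi_R:\mathcal D\to M^2_{k,sc}$ and $\pi_H:\mathcal D\to M^2_{w,sc}$, obtained from $\overline M^2\times[0,1)_w\times[0,\infty]_\kappa$ by first blowing up the corner $\{w=0\}\cap\{\kappa=\infty\}$ — which simultaneously resolves $w$, $\kappa$ and $k_{\mathrm{res}}=w\kappa$ — and then performing the iterated scattering and low-energy blow-ups defining $M^2_{k,sc}$ and $M^2_{w,sc}$; here $\pi_R$ records $(z,z',k_{\mathrm{res}})$, $\pi_H$ forgets $\kappa$, and the $\kappa$-integration is pushforward along $\pi_H$. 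The lifted integrand $\pi_R^*\!\big[(A+e^{i\theta_0}k_{\mathrm{res}}^2)^{-1}\big]\exp(e^{i\theta_0}\kappa^2)\,\chi(w\kappa)\,2e^{i\theta_0}w^2\kappa\,d\kappa$ is then polyhomogeneous conormal on $\mathcal D$ with conormal singularity only at the lifted diagonal: the resolvent factor is phg by (b) because $\pi_R$ is a b-map, the Gaussian gives rapid decay at the face over $\kappa=\infty$, and the remaining factors are monomials. Melrose's pushforward theorem then yields that the low-energy term is polyhomogeneous conormal on $M^2_{w,sc}$ — the diagonal singularity being in fact smoothed out by the $\kappa$-integration since $t\ge T$ — with index set at each boundary hypersurface the extended union, over the faces of $\mathcal D$ lying above it, of the corresponding index sets of the integrand, shifted by the exponents coming from $2e^{i\theta_0}w^2\kappa\,d\kappa$ and the blow-down Jacobians. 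The two powers of $w$ produce the shift by $2$ at bf$_0$, rb$_0$, lb$_0$ and zf; at sc this contribution is offset and the shift is $0$; the face over $\kappa=\infty$ contributes nothing by the Gaussian decay; and lb, rb, bf inherit infinite-order decay from the resolvent. This gives index sets contained in $R_{sc}$, $R_{bf_0}+2$, $R_{rb_0}+2$, $R_{lb_0}+2$ and $R_{zf}+2$ at sc, bf$_0$, rb$_0$, lb$_0$, zf, and infinite-order decay at lb, rb, bf; together with the second step this is the theorem.

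The hard part will be the construction of $\mathcal D$ and the verification that \emph{both} $\pi_R$ and $\pi_H$ are genuine b-fibrations — that the blow-ups resolving $w$, $\kappa$ and $w\kappa$ are compatible with the scattering and low-energy blow-ups on each side, that no boundary hypersurface of $\mathcal D$ is mapped into a corner, and that the lifted density $d\kappa$ has the expected divisor — together with the ensuing index-set bookkeeping in the pushforward theorem. In particular one must see how the zf face of the resolvent (where $w\kappa\to0$) distributes over the faces $\{w=0\}$ and $\{\kappa=0\}$ of $\mathcal D$; this is the geometric mechanism converting the low-energy structure of the resolvent into the long-time structure of the heat kernel. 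The Gaussian decay at $\kappa=\infty$ together with $t\ge T>0$ are exactly what guarantee convergence and the disappearance of the diagonal singularity, while the scattering-calculus blow-ups are what upgrade conormality to polyhomogeneity.
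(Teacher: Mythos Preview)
Your overall strategy---represent the heat kernel as a contour integral of the resolvent and push the polyhomogeneous structure through via Melrose's pushforward theorem---is exactly what the paper does. But two of your choices diverge from the paper's, and one of them is a genuine gap.

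\textbf{Shrinking the arc is not justified in the stated generality.} You let the small arc about $\lambda=0$ collapse and integrate along the rays from $k=0$. Near $\kappa=0$ your integrand has $b$-density order $R_{zf}+2$ (from the resolvent at zf together with $w^2\kappa\,d\kappa$), so the $\kappa$-integral converges only when every element of $R_{zf}$ has real part greater than $-2$. The theorem makes no such assumption on $R_{zf}$. The paper avoids this entirely by keeping the arc at radius $w^2$: on that arc one has $k=w$, so $R(\theta,w,z,z')$ already lives on $M^2_{w,sc}$, and the arc integral
\[
\frac{w^2}{2\pi i}\int_{-\varphi}^{\varphi} e^{e^{i\theta}} R(\theta,w,z,z')\,d\theta
\]
is trivially polyhomogeneous on $M^2_{w,sc}$ with the claimed index sets (the factor $w^2$ supplies the $+2$ shift). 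This is the key trick you are missing: it converts the most delicate part of the integral into a finite $\theta$-integral at the \emph{single} energy $k=w$, and the ray integral then runs only over $s\ge w$, sidestepping any convergence issue at $s=0$.

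\textbf{The paper does not build a single master space $\mathcal D$.} For the ray integral $\int_w^\infty 2s\,e^{-(s/w)^2 e^{i\varphi}}R(s^2,z,z')\,ds$ the paper decomposes $R=R_s+R_c$, separating the diagonal conormal singularity $R_c$ from the smooth part $R_s$, and further splits $R_c=R_1+R_2+R_3$ according to proximity to sc and zf, analyzing each piece in explicit symbolic coordinates. The smooth part $R_s$ is then split once more (near sc versus away from sc), and the near-sc piece is handled via an auxiliary iterated blow-up $S_2=[[S;P_1];P_2]$ for which two b-fibrations are checked (Propositions~\ref{propone} and~\ref{proptwo}). Your single $\mathcal D$ with simultaneous b-fibrations to $M^2_{k,sc}$ and $M^2_{w,sc}$ would, if it exists, replace this piecewise work---but constructing it is exactly ``the hard part'' you defer, and it is not clear the scattering blow-up (which depends on the energy variable) lifts compatibly in one shot. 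The paper's piecewise route is less elegant but concrete.

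\textbf{On the diagonal singularity.} The $\kappa$-integration does not smooth the conormal singularity at $z=z'$: the singularity is in the spatial variables and is essentially uniform in $\kappa$, so a transverse pushforward preserves it (this is the content of the pushforward theorem with conormal singularities from \cite{emm}). The paper instead shows the integral is polyhomogeneous \emph{with} a possible conormal singularity at the diagonal and then observes, at the very end, that since $e^{-tA}$ for $t>0$ has smooth kernel, the conormal singularity must in fact be absent. That is a cleaner way to phrase what you intend.
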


Once we have proven Theorem \ref{mainthm}, Theorem \ref{maincor} is an almost immediate consequence, though there is a slight twist involving the leading orders.

In section 3, we use Theorem \ref{maincor} and some additional geometric microlocal techniques to analyze the renormalized heat trace and prove Theorem \ref{bigrenorm}. We also analyze the renormalized zeta function and determinant in the special case where $M$ is exactly conic (or Euclidean) outside a compact set. In section 4, we extend Guillarmou and Hassell's work in \cite{gh1} to prove Theorem \ref{dslow}. Finally, in the Appendix, we use the framework in \cite{alb} to prove Theorem \ref{zshorttime}.

\subsection{Acknowledgements}

This work comprises the first part of my Stanford Ph.D. thesis \cite{s}. I owe a great deal to my advisor, Rafe Mazzeo, who introduced me to this problem and shared tremendous advice and support. It would be impossible to thank everyone who contributed to this project, but I would like to particularly thank Pierre Albin, Colin Guillarmou, Andrew Hassell, Andras Vasy, and the anonymous referee for providing interesting and helpful ideas. I am grateful to Gilles Carron and the Universit\'e de Nantes for their generous hospitality during the fall of 2010, during which a significant portion of this work was completed. Finally, I would like to thank the ARCS foundation for financial support in the 2011-2012 academic year.

\section{From resolvent to heat kernel}

The goal of this section is to prove Theorems \ref{mainthm} and \ref{maincor}. 

\subsection{Preliminaries}

We first give a brief summary of the key relevant concepts in geometric microlocal analysis; again, a self-contained introduction may be found in \cite{gri}. 
A \emph{manifold with corners} of dimension $n$ is a topological space which is locally modeled on 
$\mathbb R^k_+\times\mathbb R^{n-k}$ for some $k$; a simple example is the $n$-dimensional unit cube. \emph{Blow-up} is a way of creating new manifolds with corners from old ones, and is used to resolve certain geometric singularities. The idea is to formally introduce polar coordinates around a submanifold of a manifold with corners, in order to distinguish between directions of approach to that submanifold. For example, consider the origin as a submanifold of $\mathbb R_+^2$. To blow up the origin, we introduce polar coordinates $(r,\theta)$, which corresponds to replacing the point $(0,0)$ with a quarter-circle, which corresponds to the inward-pointing spherical normal bundle of $(0,0)\subset\mathbb R_+^2$. See \cite{me}, \cite{me2}, \cite{gri}, and/or the appendix of \cite{s} for a more detailed explanation and more general examples of blow-ups.

By Taylor's theorem, smooth functions on a manifold with corners are precisely those functions which have Taylor expansions at each boundary hypersurface and joint Taylor expansions at every corner. \emph{Polyhomogeneous conormal distributions}, which we abbreviate as phg or phg conormal, are a generalization of smooth functions. In particular, if we let $x$ be a boundary defining function, we allow terms of the form $x^s(\log x)^p$ for any $x\in\mathbb C$ and any $p\in\mathbb N_0$ to appear in the asymptotic expansions at the boundary and in the joint expansions at the corners. The \emph{index set} of a phg conormal distribution $u$ at a particular boundary hypersurface $H$ is simply the set of $(s,p)$ which appear in the asymptotic expansion of $u$ at $H$.

Polyhomogeneous conormal functions are well-behaved under addition and multiplication, but also under more complicated operations, namely pull-back and push-forward. To discuss these, we first need to discuss properties of a map $f:W\rightarrow Z$ between manifolds with corners. Roughly, we say that $f$ is a \emph{b-map} if it is smooth up to the boundary and product-type near the boundary in terms of the local coordinate models (see \cite{gri} for a precise definition). If additionally $f$ does not map any boundary hypersurface of $W$ into a corner of $Z$, and $f$ is also a fibration over the interior of every boundary hypersurface, then we call $f$ a \emph{b-fibration}. We now have the following two theorems, due to Melrose \cite{me4}, which will be critical in the analysis to follow:

\begin{proposition}[Melrose's Pull-Back and Push-Forward Theorems] Let $f:W\rightarrow Z$ be a smooth map of manifolds with corners. 

a) If $f$ is a b-map and $u$ is phg conormal on $Z$, then $f^*u$ is phg conormal on $W$. Moreover, the index sets of $f^*u$ may be computed explicitly from those of $u$ and the geometry of the map $f$.

b) If $f$ is also a b-fibration, $v$ is phg conormal on $W$, and $f_*v$ is well-defined (the pushforward is integration along the fibers, which may not converge), then $f_*v$ is phg conormal on $Z$, and again the index sets may be computed explicitly.
\end{proposition}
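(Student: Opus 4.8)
The plan is to prove both parts by localizing and reducing to explicit computations in orthant charts, where a b-map has a product normal form: part (a) will then be essentially bookkeeping, while part (b) requires a factorization of b-fibrations into elementary models together with a careful analysis of how index sets transform. After a partition of unity I would assume $Z$ is modelled on $[0,\infty)^p_X\times\mathbb R^q_Y$ and $W$ on $[0,\infty)^k_x\times\mathbb R^m_y$, with $u$ (resp.\ $v$) supported near a corner, and use that $f$ being a b-map means $f^*X_i=a_i\prod_j x_j^{e_{ij}}$ with each $a_i$ smooth and strictly positive and each $f^*Y_\alpha$ smooth, for a matrix of nonnegative integers $(e_{ij})$.

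For part (a) it suffices to pull back a single joint model term $\prod_i X_i^{s_i}(\log X_i)^{\ell_i}$ from the expansion of $u$ at a corner. I would use $f^*(X_i^{s_i})=a_i^{s_i}\prod_j x_j^{s_i e_{ij}}$ and $f^*(\log X_i)=\log a_i+\sum_j e_{ij}\log x_j$, Taylor-expand $a_i^{s_i}$ and $\log a_i$ at the corner, expand the powers of the logarithm by the multinomial theorem, and collect: this produces an asymptotic expansion on $W$ whose terms are monomials in the $x_j$ times powers of $\log x_j$ times smooth factors, so $f^*u$ is phg conormal. Tracking which exponents occur shows the index set of $f^*u$ at $\{x_j=0\}$ is the Minkowski sum $\sum_i e_{ij}E_i$, with logarithmic orders adding and being enlarged whenever two exponents coincide (or through the $\log a_i$ terms), where $E_i$ is the index set of $u$ at $\{X_i=0\}$. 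When $f$ is a b-fibration each column of $(e_{ij})$ has at most one nonzero entry, so this sum collapses to a single term.

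For part (b) I would add the b-fibration hypotheses --- $f$ a b-submersion and b-normal, i.e.\ no boundary hypersurface of $W$ maps to a corner of $Z$ --- and proceed in two stages. First, reduce to models: after a suitable finite sequence of blow-ups of $W$, $f$ factors as a composite of elementary b-fibrations of three kinds, namely (i) projection off interior factors $[0,\infty)^k_x\times\mathbb R^{m}\times\mathbb R^{m'}\to[0,\infty)^k_x\times\mathbb R^{m}$; (ii) projection $[0,\infty)_x\times Z'\to Z'$ off one boundary variable; and (iii) the multiplicative map $[0,\infty)^2_{x_1,x_2}\times Z'\to[0,\infty)_s\times Z'$, $s=x_1x_2$ (and its higher-exponent variants). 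Passing to a blow-up of $W$ is harmless, since the blow-down $\beta$ is a b-map so $\beta^*v$ is phg by part (a), $f\circ\beta$ is still a b-fibration over the same base, and $\beta$ leaves the fibre integral unchanged (it is a diffeomorphism off a measure-zero set). Second, handle the models. For (i) I would expand $v$ in the boundary variables $x$, differentiate under the integral, and integrate the expansion termwise (legitimate since the fibre integral converges by hypothesis), so $f_*v$ is phg with unchanged boundary index sets. For (ii) I would expand $v$ in $x$ with each coefficient phg on $Z'$, integrate in $x$ termwise, and read off a phg function on $Z'$ with index sets inherited from $v$. For (iii) I would substitute $s=x_1x_2$ and reduce to a one-dimensional fibre integral as in (ii) with a logarithmically weighted integrand; equivalently, take the Mellin transform in $x_1$ and $x_2$, so the pushforward becomes a contour/residue computation whose outcome is that the index set of $f_*v$ at $\{s=0\}$ is the extended union of the index sets of $v$ at $\{x_1=0\}$ and $\{x_2=0\}$ --- the extra logarithmic powers arising exactly when the two exponents collide, i.e.\ when Mellin poles coincide. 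Composing these models and propagating index sets through each stage then yields the general statement, with the index sets computed explicitly from those of $v$ and the exponent data of $f$.

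The main obstacle will be part (b), in two places. The first is the factorization: showing that finitely many blow-ups of $W$ turn $f$ into a composite of the models (i)--(iii). I would prove this by induction on the number of boundary hypersurfaces of $W$, using b-normality to control which faces of $W$ map onto which faces of $Z$; this is the step where the geometry of the map enters the index-set formulas. The second is the index-set bookkeeping: establishing that the output index sets --- assembled from Minkowski sums and the extended-union operation --- are exactly right, in particular that logarithms are generated precisely by coincidences of exponents, and doing so uniformly in parameters so that the pushforward comes out genuinely phg (jointly conormal at all corners, not merely asymptotic at each hypersurface separately). This last point forces the model computations to be carried out with uniform remainder estimates rather than pointwise asymptotics. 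Finally, convergence of the fibre integral is a genuine hypothesis, not automatic, and is invoked at each termwise integration.
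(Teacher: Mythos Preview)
The paper does not prove this proposition at all: it is stated as a known result due to Melrose, with a citation to \cite{me4} (and the surrounding discussion points to \cite{gri} for an exposition). So there is no proof in the paper to compare against; you have gone well beyond what the paper does by sketching an actual argument.

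Your sketch for part (a) is the standard one and is correct. For part (b), however, the approach you outline is not the usual one and has a genuine gap. The standard proofs (Melrose's original, or the exposition in Grieser) do not factor an arbitrary b-fibration through blow-ups into elementary models; they work directly, either via the Mellin transform characterization of polyhomogeneity (meromorphic continuation with prescribed poles) or via the equivalent description in terms of stable regularity under radial vector fields together with decay estimates, and then analyze how the fibre integration interacts with these structures in local product charts. Your factorization claim---that after finitely many blow-ups of $W$ the map becomes a composite of the three elementary types---is plausible in spirit but not standard, and the supporting assertion that ``$f\circ\beta$ is still a b-fibration over the same base'' is not true in general: if you blow up a corner of $W$ whose constituent hypersurfaces map under $f$ to \emph{distinct} hypersurfaces of $Z$, the new front face maps to their intersection, i.e.\ to a corner of $Z$, so $f\circ\beta$ fails b-normality. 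You would need to restrict carefully which blow-ups are allowed, and it is not clear that the restricted class suffices to achieve the factorization. If you want to pursue a proof, it is safer to follow the Mellin-transform route directly rather than attempt the factorization.
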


Finally, we need to consider distributions which have pseudodifferential-type \emph{conormal singularities} at submanifolds in the interior of a manifold with corners.
\begin{definition}\cite{gri} Let $y=(x_{1},\ldots,x_{k})$ and 
$z=(x_{k+1},\ldots,x_{n})$, and let $N$ be the set $\{z=0\}$ in $\mathbb R^n_{y,z}$. A distribution $u$ on $\mathbb R^{n}$ has a
conormal singularity at $N$ of order $m$ if it can be written
\[u(y,z)=\int_{\mathbb R^{n-k}}e^{iz\cdot\xi}a(y,\xi)\ d\xi,\]
where $a$ is a classical symbol; that is, $a$ has asymptotics as $|\xi|
\rightarrow\infty$
\[a(y,\xi)\sim\sum_{j=0}^{\infty}a_{m-j}(y,\frac{\xi}{|\xi|})|\xi|^{m-j},\]
with each coefficient $a_{m-j}$ smooth in $y$ and $\frac{\xi}{|\xi|}$.
\end{definition}
This definition may be extended, by using the local coordinate models, to define distributions with a conormal singularity at any \emph{p-submanifold} of a manifold with corners; a p-submanifold is a subset which, in each local coordinate chart, may be identified with a coordinate submanifold. Variants of the pullback and pushforward theorems also hold for polyhomogeneous conormal distributions with interior conormal singularities \cite{me2,emm}.

\subsection{The space $M^2_{k,sc}$}
We now introduce the space $M^2_{k,sc}$, which first appears in an unpublished note of Melrose and Sa Barreto \cite{msb}, and later in \cite{gh1, gh2, ghs}. To construct $M^2_{k,sc}$, we begin with the space
$M^2_k=[0,\infty)_k\times\bar M\times \bar M$; coordinates on this space near $[0,\infty)_k\times\partial\bar M\times\partial\bar M$ are $(k,x,y,x',y')$.
There are three boundary hypersurfaces: $\{k=0\}$, which we call zf,
$\{x=0\}$, which we call lb, and $\{x'=0\}$, which we call rb.

\begin{figure}
\centering
\includegraphics[scale=0.45]{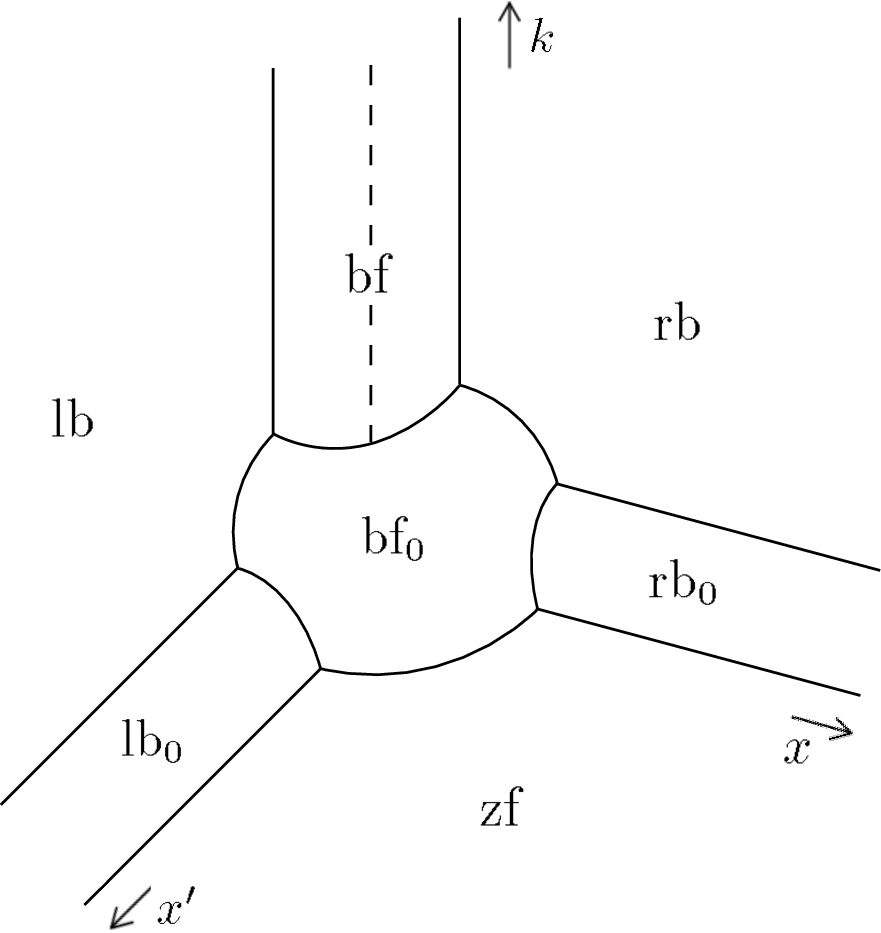}
\caption{The space $M^2_{k,b}$.}\label{firstblowupgh}
\end{figure}

First we blow up the corner $\{x=0,x'=0,k=0\}$, which corresponds to the introduction of polar coordinates near that corner; we call the front face of this
blowup bf$_{0}$. We then blow up three codimension-2 submanifolds: we blow up
$\{x=0,x'=0\}$ and call the new face bf, we blow up $\{x=0,k=0\}$ and call
the resulting face lb$_{0}$, and we blow up $\{x'=0,k=0\}$ and call the resulting
face rb$_{0}$. The resulting manifold with corners, which we call $M^2_{k,b}$ (as in \cite{ghs}), is shown in Figure \ref{firstblowupgh}, with $y$ and $y'$ suppressed. Using the definition of a "b-stretched product" from the work of Melrose-Singer \cite{ms}, we can identify this manifold near $x=0$, $x'=0$ with $X_{b}^{3}(x,x',k)\times N_{y}\times N_{y'}$. We use these b-stretched products $X_b^n$ from \cite{ms} throughout the arguments.

Finally, consider the intersection of the closure of the interior spatial
diagonal with the face bf. In coordinates near the boundary,
this is $\{(x/x')=1,y=y',(x/k)=0\}$, and it is marked with a dotted line in 
Figure \ref{firstblowupgh}. We blow this up to create a new boundary hypersurface, which we call
sc (for ``scattering''). The resulting space is $M^{2}_{k,sc}$,
and it has eight boundary hypersurfaces, illustrated in Figure \ref{ghspace}.
The spatial diagonal $D_{k,sc}$ is defined to be 
the closure in $M^{2}_{k,sc}$ of the
interior spatial diagonal; its intersection with the boundary
is marked with a dotted line in Figure \ref{ghspace}.

\begin{figure}
\centering
\includegraphics[scale=0.45]{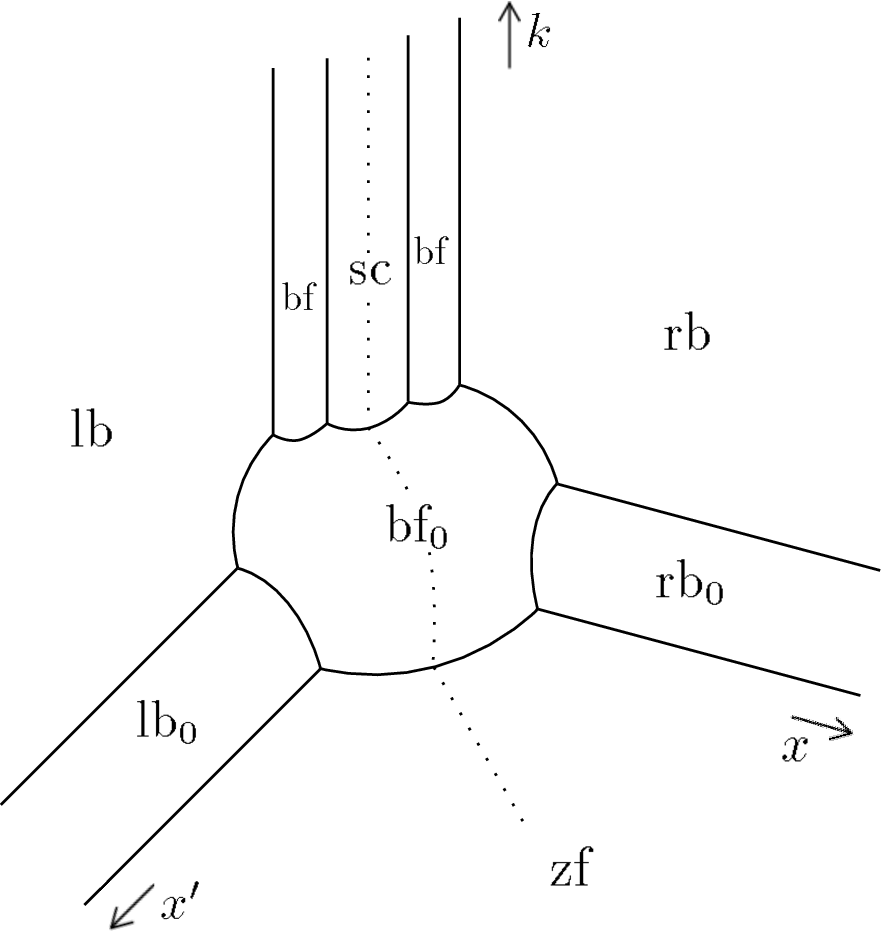}
\caption{The space $M^{2}_{k,sc}$.}\label{ghspace}
\end{figure}

We now describe some useful coordinate systems on $M^{2}_{k,sc}$. Near
the intersection of zf, rb$_{0}$, and bf$_{0}$, we use the coordinates
\[(x,\sigma=\frac{x'}{x},y,y',\kappa'=\frac{k}{x}).\]
In these coordinates, $x$ is a boundary defining function (bdf) for bf$_{0}$, $\sigma$ is a bdf for rb$_{0}$,
and $\kappa'$ is a bdf for zf. Similarly, near the intersection of zf, lb$_{0}$, and bf$_{0}$,
we use the coordinates
\[(x',\sigma'=\frac{x}{x'},y,y',\kappa=\frac{k}{x}).\]

Coordinates near sc are slightly more complicated; before the final blowup,
good coordinates are $(x',\sigma',y,y',(x/k))$. After the blowup, we
use the coordinates
\[X=k(\frac{1}{x'}-\frac{1}{x}),Y=k\frac{y-y'}{x},\lambda=\frac{x}{k},y,k.\]
These are valid in a neighborhood of the intersection of $D_{k,sc}$
with sc and bf$_{0}$; however, they are not good coordinates as we approach bf.

In addition to the b-stretched products of \cite{ms} and the space $M^{2}_{k,sc}$, we also define the \emph{scattering double space} $M^2_{sc}(z,z')$, originally described in \cite{me3} (see also \cite{ms}). It is a blown-up version of $\bar M\times \bar M$; the first blow-up is of $\{x=x'=0\}$, and the second blow-up is of the boundary fiber diagonal $\{x'=0, x/x'=1, y=y'\}$. Notice that each cross-section of $M^2_{k,sc}$ corresponding to a fixed $k>0$ is a copy of $M^2_{sc}$.
\subsection{Proof of Theorem \ref{mainthm}}

Let $A$ be an operator satisfying hypotheses a)-c) of Theorem \ref{mainthm}, and let $R(\lambda,z,z')$ be the Schwartz kernel of $(A+\lambda)^{-1}$. The spectrum of $A$ is $[0,\infty]$, so $R(\lambda,z,z')$ is holomorphic outside the non-positive real axis. Fix $\varphi\in(\pi/2,\pi)$.
For any $a>0$, let $\Gamma_{a}$ be the path in $\mathbb C$
consisting of two half-rays along $\theta=-\varphi$ and
$\theta=\varphi$, connected by the portion of the circle of radius $a$ 
from $\theta=-\varphi$ to $\theta=\varphi$, and traversed counterclockwise.
Moreover, let $\Gamma_{a,1}$ be the portion of $\Gamma_{a}$ along the circle
of radius $a$, and let $\Gamma_{a,2}$ be the remainder; that is, the two
half-rays. These contours are illustrated in Figure \ref{contour}.

\begin{figure}
\centering
\includegraphics[scale=0.70]{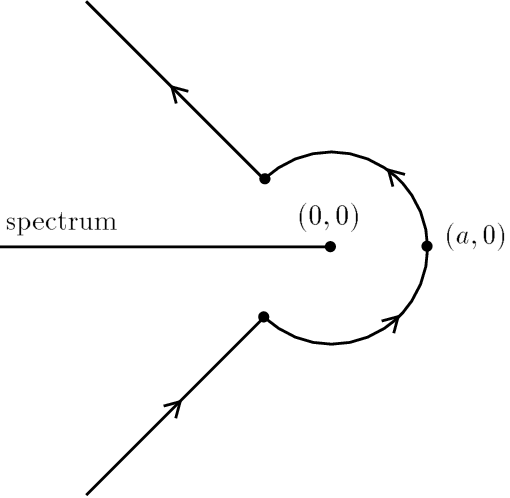}
\caption{Contour of integration $\Gamma_{a}$.}\label{contour}
\end{figure}

Let $F(w,z,z')$ be the heat kernel at time $t=w^{-2}$. Then by the functional calculus, we have
\begin{equation}\label{bigint}F(w,z,z')=\frac{1}{2\pi i}\int_{\Gamma}e^{\lambda/w^{2}}R(\lambda,z,z')\ d\lambda.\end{equation}
We let $a=w^{2}$ and $\Gamma=\Gamma_{w^{2}}$, and then consider the integral 
(\ref{bigint})
over $\Gamma_{w^{2},1}$ and $\Gamma_{w^{2},2}$ separately.

On $\Gamma_{w^{2},1}$, $\lambda=w^{2}e^{i\theta}$, so $d\lambda=w^{2}d\theta$,
and we have \begin{equation}\label{gammaone}
\frac{w^{2}}{2\pi i}\int_{-\varphi}^{\varphi}e^{e^{i\theta}}R(\theta,w,z,z')\ d\theta.
\end{equation}
By condition b), for each $\theta$, the integrand in (\ref{gammaone})
is phg conormal on $M^{2}_{w,sc}$ with a conormal singularity at $\Delta_{w,sc}$, 
and the dependence of all coefficients on $\theta\in [-\varphi,\varphi]$ is smooth. Therefore, the integral (\ref{gammaone}) is phg conormal on $M^{2}_{w,sc}$, with a possible
conormal singularity at $D_{w,sc}$. The index sets of (\ref{gammaone}) on $M^2_{w,sc}$
are those of $w^{2}$ plus those of $R(w,z,z')$. The function 
$w^{2}$ is smooth and has order
2 as a function at zf, bf$_{0}$, rb$_{0}$, and lb$_{0}$ and order 0 everywhere
else, so we add 2 to the index sets of the resolvent at those faces. This procedure gives precisely the index sets claimed in Theorem \ref{mainthm}.

It remains to consider the integral over $\Gamma_{w^{2},2}$. 
$\Gamma_{w^{2},2}$ consists
of two half-rays; we consider only the half-ray corresponding to $\theta=\varphi$,
as the other is analogous. Since $\theta$ is fixed, we suppress $\theta$
in the notation; the integral runs from $r=w^{2}$ to $r=\infty$.
After changing variables from $r$ to $s=\sqrt r$, and dropping the overall factor of $(2\pi i)^{-1}$ (which does not affect polyhomogeneity), the $\Gamma_{w^{2},2}$ portion
of (\ref{bigint}) becomes: \begin{equation}\label{gammatwo}
\int_{w}^{\infty}2se^{(\cos\varphi)s^{2}/w^{2}}e^{i(\sin\varphi)s^{2}/w^{2}}
R(s^2,z,z')\ ds.\end{equation}

First consider the behavior of the integrand as $s\rightarrow\infty$. Since $\cos\varphi<0$ and $w$ is bounded above, the term $\exp((\cos\varphi)s^2/w^2)$, and hence the entire integrand of (\ref{gammatwo}), will decay to infinite order at $s=\infty$. There will still be a conormal singularity at the spatial diagonal, but the coefficients decay to infinite order at $s=\infty$.

In order to analyze (\ref{gammatwo}), we break $R(s^2,z,z')$ into pieces. First we
separate out the conormal singularity in a neighborhood of $D_{s,sc}$ and analyze
(\ref{gammatwo}) using explicit local coordinates. Then we deal
with the remainder, which is smooth on $M^{2}_{s,sc}$, by breaking it into
two pieces and applying the pushforward theorem.

\subsection{The conormal singularity}

Using a partition of unity, we let $R=R_{s}+R_{c}$, where $R_{c}$ is supported
in a neighborhood of $D_{s,sc}$ and $R_{s}$ is supported away from $D_{s,sc}$, as in Figure
5.4. In a neighborhood of sc$\cap$bf$_{0}$, we use the coordinates
\[(X=s(\frac{1}{x}-\frac{1}{x'}), Y=s\frac{y-y'}{x},y,\mu=\frac{x}{s},s).\]
In these coordinates, $R$ has a conormal singularity at $\{X=Y=0\}$.
On the other hand, in a neighborhood of bf$_{0}\cap$zf$\cap D_{s,sc}$, 
we use a slight modification of the coordinates in the previous subsection: \[(\hat X=(1-\frac{x}{x'}),\hat Y=y-y',y,\frac{s}{x},x).\]
Since we use different coordinates in different regimes, write
$R_{c}=R_{1}+R_{2}+R_3$ by using a smooth partition of unity near $x/s=1$;
$R_{1}$ is supported near the boundary but away from sc (say $s/x<2$), $R_{2}$ near the boundary but away from zf (say $x/s<2$), and $R_3$ in the interior. The decomposition at the boundary is illustrated in Figure \ref{rdecomp}.

\begin{figure}
\centering
\includegraphics[scale=0.70]{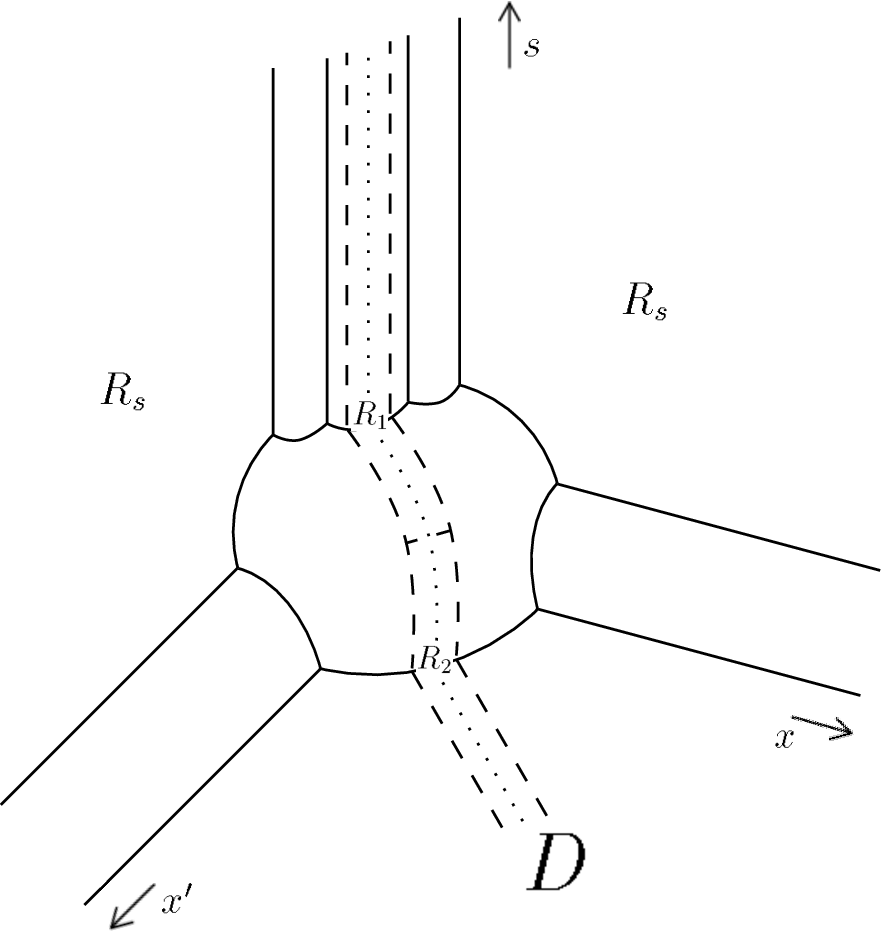}
\caption{Decomposition of $R$.}\label{rdecomp}
\end{figure}

First look at $R_{1}$. Using the explicit symbolic form of a conormal singularity, we may write
\begin{equation}\label{finalthing}
R_{1}\sim\int_{\mathbb R^{n}}e^{i(\hat X,\hat Y)\cdot(\xi_{1},\xi_{2})}
\sum_{j=0}^{\infty}a_{j}(\frac{s}{x},x,y,\frac{\xi}{|\xi|})|\xi|^{2-j}\ d\xi.
\end{equation}
This is an asymptotic sum, modulo smooth functions on $M^{2}_{s,sc}$;
we pick a particular representative which is supported
in a small neighborhood of $D_{s,sc}$, and absorb the remainder into $R_{s}$. The
coefficients $a_{j}$ are phg conormal in $x$ and $s/x$ with
index sets independent of $j$; they are also smooth in $y$ and $\frac{\xi}{|\xi|}$.
We plug (\ref{finalthing}) into (\ref{gammatwo}) and then interchange the convergent 
$s$-integral with the asymptotic sum and the oscillatory integral 
over $\mathbb R^{n}$. The result is
\begin{equation}\label{conormintone}
\int_{\mathbb R^{n}}e^{i(\hat X,\hat Y)\cdot(\xi_{1},\xi_{2})}
\sum_{j=0}^{\infty}(\int_{w}^{\infty}2se^{-(s/w)^{2}e^{i\varphi}}
a_{j}(\frac{s}{x},x,y,\frac{\xi}{|\xi|})\ ds)|\xi|^{2-j}\ d\xi.\end{equation}

By Melrose's pullback theorem \cite{me2},
the pullback of each $a_j$ to $X_{b}^{3}(s,x,w)\times N_{y}\times
S^{n-1}_{\xi/|\xi|}$ via projection is also phg conormal with index sets
independent of $j$. As a result, the integrand in
\begin{equation}\label{conormcoeffone}
\int_{w}^{\infty}2se^{-(s/w)^{2}e^{i\varphi}}
a_{j}(\frac{s}{x},x,y,\frac{\xi}{|\xi|})\ ds\end{equation}
is phg conormal on $X_{b}^{3}(s,w,x)\times N_{y}\times S^{n-1}_{\xi/|\xi|}$, 
with a cutoff singularity at $s/w=1$. Moreover, it has infinite-order decay at $s=\infty$, independent of $w<1$ and $x<1$, and hence integration in $s$ is well-defined. Integration in $s$ is a b-fibration from $X_{b}^{3}(s,w,x)$
to $X_{b}^{2}(w,x)$, by Proposition 4.4 of \cite{ms}, and is hence also a b-fibration
when we take the direct product with $N_{y}\times S^{n-1}_{\xi/|\xi|}$. Moreover,
integration in $s$ is transverse to the cutoff singularity at $s/w=1$.
Therefore, by the pushforward theorem with conormal singularities (from the Appendix of \cite{emm}),
(\ref{conormcoeffone}) is phg conormal on $X_{b}^{2}(w,x)\times N_{y}\times
S^{n-1}_{\xi/|\xi|}$, with index sets independent of $j$.

Since $w<s<2x$ on the support of $R_{1}$,
(\ref{conormcoeffone}) has a phg conormal expansion in $(w/x,x)$.
Therefore, the integral (\ref{conormintone}), in the coordinates $(\hat X,\hat Y,y,w/x,x)$, corresponding to the
$R_{1}$ piece of (\ref{gammatwo}), is phg conormal in $(w/x,x)$ with index sets independent of $j$, and
smoothly dependent on $y$, with an
interior conormal singularity at $\hat X=\hat Y=0$. Thus
(\ref{conormintone}) is phg conormal on $M^{2}_{w,sc}$ with a conormal
singularity at the diagonal.

\medskip

We now consider $R_{2}$; the analysis is similar. Write
\begin{equation}\label{conormformtwo}
R_{2}\sim\int_{\mathbb R^{n}}e^{i(X,Y)\cdot(\xi_{1},\xi_{2})}
\sum_{j=0}^{\infty}b_{j}(\frac{x}{s},s,y,\frac{\xi}{|\xi|})|\xi|^{2-j}\ d\xi,
\end{equation}
where the $b_{j}$ are phg conormal in $\frac{x}{s}$ and $s$ with index sets
independent of $j$, and also smooth in $y$ and $\frac{\xi}{|\xi|}$.

It is helpful to consider the regimes $w>x/2$ and $w<2x$ separately. 
First assume that $w>x/2$, and let $\bar X=w(\frac{1}{x}-\frac{1}{x'})$,
$\bar Y=w\frac{y-y'}{x}$, and $\bar\lambda=\frac{x}{w}$. We expect a
conormal singularity at $\bar X=\bar Y=0$ in this regime. Noting that 
$(X,Y)=(s/w)(\bar X,\bar Y)$, we change variables in (\ref{conormformtwo})
and let $\zeta=(s/w)\xi$. The result is:
\begin{equation}\label{conormmodformtwo}
R_{2}\sim\int_{\mathbb R^{n}}e^{i(\bar X,\bar Y)\cdot(\zeta_{1},\zeta_{2})}
\sum_{j=0}^{\infty}b_{j}(\frac{x}{s},s,y,\frac{\zeta}{|\zeta|})(s/w)^{j-2-n}
|\zeta|^{2-j}\ d\zeta.\end{equation}
As before, plug (\ref{conormmodformtwo}) into (\ref{gammatwo}) and
interchange the sums and convergent integrals: the part of (\ref{gammatwo})
coming from $R_{2}$ is
\begin{equation}\label{conorminttwo}
\int_{\mathbb R^{n}}e^{i(\bar X,\bar Y)\cdot(\zeta_{1},\zeta_{2})}
\sum_{j=0}^{\infty}(\int_{w}^{\infty}2se^{-(s/w)^{2}e^{i\varphi}}
b_{j}(\frac{x}{s},s,y,\frac{\zeta}{|\zeta|})(s/w)^{j-2-n}\ ds)|\zeta|^{2-j} d\zeta.
\end{equation}
 
Consider the coefficients
\begin{equation}\label{conormcoefftwo}
\int_{w}^{\infty}2se^{-(s/w)^{2}e^{i\varphi}}
b_{j}(\frac{x}{s},s,y,\frac{\zeta}{|\zeta|})(s/w)^{j-2-n}\ ds.
\end{equation}
If we can show that the coefficients (\ref{conormcoefftwo}) are 
phg conormal in $(\frac{x}{w},w)$ with
respect to some index sets independent of $j$, with smooth
dependence on $y$ and $\frac{\zeta}{|\zeta|}$, then (\ref{conorminttwo})
is phg conormal on $M^{2}_{w,sc}$ with a conormal singularity
at the diagonal when $w>x/2$.
To show this phg conormality, note that again the integrands in
(\ref{conormcoefftwo}) are each phg conormal on $X_b^{3}(s,x,w)\times N_{y}
\times S^{n-1}_{\zeta/|\zeta|}$. Moreover, the index sets are independent of $j$,
as $s/w>1$, and there is always infinite-order decay at $s=\infty$, independent
of $w$. As before, we use the pushforward theorem to integrate in $s$, 
and we conclude that the coefficients (\ref{conormcoefftwo}) are phg conormal on
$X_{b}^{2}(x,w)\times N_{y}\times S^{n-1}_{\zeta/|\zeta|}$ with respect
to index sets independent of $j$. Since $w>x/2$, this yields expansions
in $\frac{x}{w}$ and $w$, which is precisely what we need.

On the other hand, suppose that $w<2x$. Then we expect a conormal singularity
at $\hat X=\hat Y=0$. Since $(X,Y)=(s/x)(\hat X,\hat Y)$, we change variables
in (\ref{conormformtwo}), letting $\zeta'=(s/x)\xi$. Following the exact
same procedure as in the $w>x/2$ case, we see that the part of (\ref{gammatwo})
coming from $R_{2}$, when $w<2x$, is:
\begin{equation}\label{conormintthree}
\int_{\mathbb R^{n}}e^{i(\hat X,\hat Y)\cdot(\zeta'_{1},\zeta'_{2})}
\sum_{j=0}^{\infty}(\int_{w}^{\infty}2se^{-(s/w)^{2}e^{i\varphi}}
b_{j}(\frac{x}{s},s,y,\frac{\zeta'}{|\zeta'|})(s/x)^{j-2-n}\ ds|\zeta'|^{2-j}) 
d\zeta'.
\end{equation}
The coefficients can be rewritten as
\begin{equation}\label{conormcoeffthree}(w/x)^{j-2-n}
\int_{w}^{\infty}2se^{-(s/w)^{2}e^{i\varphi}}
b_{j}(\frac{x}{s},s,y,\frac{\zeta}{|\zeta|})(s/w)^{j-2-n}\ ds.
\end{equation}
But these are just $(w/x)^{j-2-n}$ times (\ref{conormcoefftwo}).
$(w/x)^{j-2-n}$ is phg conormal on $X_{b}^{2}(w,x)$, so
(\ref{conormcoeffthree}) is also phg conormal on $X_{b}^{2}(w,x)$ for 
each $j$. Since we are only considering $w<2x$,
the orders only improve as $j$ increases. In particular, all the
coefficients are phg conormal on $X_{b}^{2}(w,x)$ with respect to subsets of
the index set of the $j=0$ coefficient. This is again sufficient to prove
that (\ref{conormintthree}) is phg conormal on $M^2_{w,sc}$ with a conormal singularity at the diagonal.

\medskip

Finally, consider the interior term $R_3$; it is the simplest of the lot, since $z$ and $z'$ are in a compact subset of $M$. We let $\eta$ be a dual variable to $z-z'$ and write
\begin{equation}\label{finalthingthree}
R_{3}\sim\int_{\mathbb R^{n}}e^{i(z-z')\cdot(\eta)}
\sum_{j=0}^{\infty}c_{j}(s,z,\frac{\eta}{|\eta|})|\eta|^{2-j}\ d\eta.
\end{equation}
Here the $c_j$ are phg conormal at $s=0$ and $s=\infty$, with index sets independent of $j$ at $s=0$ and $s=\infty$. Following the same procedure as in the previous two cases, simplified since $z-z'$ and $\eta$ are independent of $s$, we conclude that the part of (\ref{gammatwo}) coming from $R_3$ is 
\begin{equation}\label{conormintfour}
\int_{\mathbb R^n}e^{i(z-z')\cdot\eta}
\sum_{j=0}^{\infty}(\int_{w}^{\infty}2se^{-(s/w)^{2}e^{i\varphi}}
c_{j}(s,z,\frac{\eta}{|\eta|})\ ds|\eta|^{2-j})\ d\eta.
\end{equation}
Analyzing the coefficients, we see that the integrand in each is phg conormal on $X_b^2(s,w)$, with index sets independent of $j$, $z$, and $\eta/|\eta|$, and with infinite-order decay at $s=\infty$. By the pushforward theorem, each coefficient is phg conormal at $w=0$, with index sets independent of $j$, $z$, and $\eta/|\eta|$. Moreover, (\ref{conormintfour}) has compact support in $(z,z')$. Therefore, (\ref{conormintfour}), and hence the part of (\ref{gammatwo}) corresponding to $R_c$, is phg conormal on $M^{2}_{w,sc}$ with a
conormal singularity at the diagonal.

Technically, we need to compute the index sets of the coefficients of the
conormal singularity at each
boundary face of $M^{2}_{w,sc}$. This may be done directly via the pushforward theorem, but it is easier to apply the analysis we will develop in the next section. Note that $a_{j}$ and $b_{j}$
may be viewed as phg conormal functions on the diagonal $D_{s,sc}\subset
M^{2}_{s,sc}$, with fixed index sets $R_{sc}$, $R_{bf_0}$, and $R_{zf}$
at the boundary hypersurfaces sc, bf$_0$, and zf. Observe that they can be extended
smoothly to functions defined in a neighborhood of $D_{s,sc}$ which are themselves
phg conormal on $M^{2}_{s,sc}$ with the given index sets;
call these extensions $d_{j}$. Then the coefficients
of the conormal singularity of the integral (\ref{gammatwo}) are the
restrictions to the diagonal of
\[
\int_{w}^{\infty}2se^{-(s/w)^{2}e^{i\varphi}} d_{j}(s,x,y,x',y')\ ds.\]
Applying Lemma \ref{biglemma} to each $d_j$, these coefficients are all phg conormal
on $M^{2}_{w,sc}$, with index sets obtained by adding $2$ at the faces bf$_0$, rb$_0$, lb$_0$, and zf. Therefore the restrictions to the diagonal are all phg conormal on $D_{w,sc}$, with
leading orders matching those in Theorem \ref{mainthm}, as expected. This completes the analysis of the conormal singularity.

\subsection{Finishing the proof}

It remains to consider the integral
\begin{equation}\label{gammatwosmooth}
\int_{w}^{\infty}2se^{-(s/w)^{2}e^{i\varphi}}
R_{s}(s^2,z,z')\ ds,\end{equation}
where $R_{s}(s^2,z,z')$ is phg conormal on $\bar M^{2}_{s,sc}$ and smooth
across the diagonal. We claim:

\begin{lemma}\label{biglemma} Let $T(s,z,z')$ be any function which is
phg conormal on $\bar M^{2}_{s,sc}$, smooth in the interior, and decaying to infinite
order at lb, rb, and bf. Then
\begin{equation}\label{genint}
\int_{w}^{\infty}2se^{-(s/w)^{2}e^{i\varphi}}T(s,z,z')\ ds
\end{equation}
is phg conormal on $M^{2}_{w,sc}$ for $w$ bounded above. Moreover, if the index sets of $T$ at the various boundary hypersurfaces are $T_{sc}$, $T_{bf_{0}}$, $T_{zf}$, $T_{rb_{0}}$, $T_{lb_{0}}$, and $T_{tf}$,
then the index sets of (\ref{genint}) are:
\[T_{sc},\ T_{bf_{0}}+2,\ T_{zf}+2,\ T_{rb_{0}}+2,\ T_{lb_{0}}+2.\]
\end{lemma}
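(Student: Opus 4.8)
The plan is to reduce the lemma to a single application of Melrose's pushforward theorem, after first isolating an explicit factor of $w^2$. First I would substitute $u=(s/w)^2$, so that $2s\,ds=w^2\,du$ and the integral (\ref{genint}) becomes
\begin{equation}
w^2\int_1^\infty e^{-ue^{i\varphi}}\,T(w\sqrt u,z,z')\,du .
\end{equation}
Since $\cos\varphi<0$, the weight $e^{-ue^{i\varphi}}$ decays to infinite order as $u\to\infty$, and because $w$ is bounded above this decay is uniform in $w$; this already shows the $u$-integral converges and that the locus $s=\infty$ --- the face tf of $\bar M^2_{s,sc}$ --- contributes nothing, which is why tf is absent from the conclusion. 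It is therefore enough to prove that $G(w,z,z'):=\int_1^\infty e^{-ue^{i\varphi}}T(w\sqrt u,z,z')\,du$ is phg conormal on $M^2_{w,sc}$ with index sets $T_{sc},T_{bf_0},T_{zf},T_{rb_0},T_{lb_0}$ at the respective faces and infinite-order decay at lb, rb, and bf. Granting this, one multiplies by $w^2$, which --- as already recorded in the treatment of the term (\ref{gammaone}) --- is a smooth function on $M^2_{w,sc}$ vanishing to order $2$ at zf, bf$_0$, rb$_0$, lb$_0$ and to order $0$ elsewhere, and since index sets of a product add, the index sets of (\ref{genint}) come out exactly as stated.

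To analyze $G$ I would realize it as a pushforward along a b-fibration. Because $T$ decays to infinite order at lb, rb, and bf, the corresponding regions of $M^2_{w,sc}$ are disposed of at once, and using a partition of unity subordinate to the coordinate charts introduced earlier in this section it suffices to work near each of sc, bf$_0$, zf, rb$_0$, lb$_0$ in turn. Away from $s=\infty$, where the weight already furnishes infinite-order decay, the integrand is phg conormal on the appropriate $s$-energy $b$-stretched product --- near the corners, $X^3_b(s,w,x)\times N_y\times N_{y'}$ with a further blow-up of the lifted diagonal near sc --- and integration in $s$ is a b-fibration onto the corresponding chart of $M^2_{w,sc}$ (the relevant $b$-stretched product in $w$ and the spatial variables), by Proposition 4.4 of \cite{ms}, transverse moreover to the lower cutoff $s=w$. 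By Melrose's pullback theorem the lift of $T$, which is phg on $\bar M^2_{s,sc}$, and of the weight remain phg on this space; since $T$ is smooth across the diagonal there is no interior conormal singularity to carry along, and the ordinary pushforward theorem then shows that $G$ is phg conormal on $M^2_{w,sc}$.

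What remains is to check that this pushforward introduces no new boundary terms, i.e.\ that $G$ inherits precisely the index sets of $T$. This is transparent from the displayed integral: a term $\rho^\alpha(\log\rho)^p$ in the expansion of $T$ at one of bf$_0$, zf, rb$_0$, lb$_0$, with $\rho$ the corresponding boundary defining function of $\bar M^2_{s,sc}$, becomes under $s=w\sqrt u$ a term comparable to $w^\alpha$ times bounded powers of $u$ and of $\log u$, so that integrating against $e^{-ue^{i\varphi}}$ over $u\in[1,\infty)$ returns $w^\alpha$ times a polynomial in $\log w$ of the same degree $p$; hence the pair $(\alpha,p)$ is carried to itself. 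At sc, $w$ does not vanish, the $u$-integral localizes near $u=1$, and $G$ simply inherits $T_{sc}$. Patching the charts gives the claim for $G$, and multiplying back by $w^2$ completes the lemma. The coordinate bookkeeping here is routine but requires the same regime-splitting (the distinction $w\gtrless x/2$) already encountered in the analysis of the conormal singularity.

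The hard part will be the step sketched in the second paragraph: pinning down the correct triple space near the corners of $M^2_{w,sc}$ and verifying that integration in $s$ really is a b-fibration onto it and transverse to the cutoff $s=w$. The interplay of the lower limit $s=w$ with the deepest corners is exactly what forces the case distinction above, and once the b-fibration picture is secured the pushforward theorem delivers everything else.
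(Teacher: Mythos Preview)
Your overall strategy---pull the integrand back to a triple space in $(s,w,z,z')$ and push forward along the $s$-direction via Melrose's pushforward theorem---is the paper's own. The substitution $u=(s/w)^2$ is a pleasant repackaging that makes the $+2$ shift at zf, bf$_0$, lb$_0$, rb$_0$ transparent (the paper tracks this through the pushforward index-set computation instead), but it does not alter the architecture of the argument and does not simplify the geometric part: after writing $s=w\sqrt u$ the pullback of $T$ still depends on ratios like $w\sqrt u/x$, so the same blown-up triple space is required.

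The gap you flag in your final paragraph is real and is precisely what the paper's proof is devoted to. Invoking Proposition~4.4 of \cite{ms} alone is not enough: that result handles projections between b-stretched products $X^n_b\to X^{n-1}_b$, but near sc the target $M^2_{w,sc}$ carries an additional blow-up of the boundary fiber diagonal, and one must verify that integration in $s$ remains a b-fibration onto this blown-up target. The paper does this by splitting $T=T_1+T_2$ with $T_1$ supported away from sc (so the $X^4_b$ machinery of \cite{ms} suffices) and $T_2$ supported near sc. For $T_2$ it builds an explicit iterated blow-up $S_2=[[S;P_1];P_2]$ of $S=X^3_b(s,w,x)\times B(\bar\sigma)\times N_y$ and proves two separate propositions (Propositions~\ref{propone} and \ref{proptwo}) showing that the maps $\pi_w$ and $\pi_s$ are a b-map and a b-fibration respectively, using Lemma~10 of \cite{hmm} rather than \cite{ms}. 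Your phrase ``$X^3_b(s,w,x)\times N_y\times N_{y'}$ with a further blow-up of the lifted diagonal near sc'' is aimed at the right object but must be made precise in exactly this way to become a proof.

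Finally, your third-paragraph heuristic---reading off the index sets termwise from $\rho^\alpha(\log\rho)^p$---cannot replace the pushforward theorem. It examines one face at a time and says nothing about the joint expansions at corners, which is where polyhomogeneity can fail; pushforward along a b-fibration can in general produce extended unions of index sets, and ruling out extra logarithms requires the exponent-matrix information recorded in Proposition~\ref{proptwo}.
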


We defer the proof for the moment. Applying Lemma \ref{biglemma} to $T(s)=R_{s}(s^2)$, we
conclude that (\ref{gammatwosmooth}) is phg conormal on $M^{2}_{w,sc}$ with index sets
precisely as in Theorem \ref{mainthm}.
Combining this with our analysis of  $R_c$, we 
have now shown that $F(w,z,z')$ is phg conormal on $M^{2}_{w,sc}$
possibly with a conormal singularity at the spatial diagonal, and
with leading orders as specified in Theorem \ref{mainthm}. However, $F(w,z,z')$ is a heat kernel, so it has no conormal singularity at the diagonal. This completes
the proof of Theorem \ref{mainthm}.

Finally, to prove Theorem \ref{maincor}, we apply Theorem \ref{mainthm}. Condition a) is true since the Laplacian is essentially self-adjoint and non-negative. Condition b) follows from Theorems \ref{ghlow} and \ref{dslow}. Condition c) is a well-known consequence of the semiclassical scattering calculus. The scattering calculus was first introduced by Melrose in \cite{me3} and the semiclassical version was developed by Vasy, Wunsch, and Zworski among others \cite{vz,wz}. The exact statement we need, along with a summary of the semiclassical scattering calculus, may be found in section 10 of \cite{hw}; $\hbar$ in the semiclassical calculus corresponds to $k^{-1}$ in our context. Applying Theorem \ref{mainthm} gives us the polyhomogeneity we claim, and once we plug in the leading orders from \cite{gh1} and the Appendix, we see that the heat kernel has leading orders of 0 at sc and $n$ at each of bf$_0$, rb$_0$, and lb$_0$.

Unfortunately, Theorem \ref{mainthm} does not by itself give us the claimed order-$n$ behavior at zf; instead, we only see at least quadratic decay at zf when $n\geq 3$ and decay of at least the form $w^2\log w$ when $n=2$. However, this may be improved via an a priori argument due to Pierre Albin \cite{albpc}. Specifically, since we know the heat kernel is polyhomogeneous, it has an expansion at zf of the form
\[F(w,z,z')=w^{s_0}(\log w)^j a_0(z,z')+\textrm{ (lower order terms)}\]
for some $s_0\in\mathbb R$, $j\in\mathbb N_0$, with $a_0(z,z')$ not identically zero. Applying the heat operator to this heat kernel gives zero by definition, and in these coordinates, the heat operator is $-w^2\partial_w+\Delta_z$. Since the kernel has a polyhomogeneous conormal expansion we may apply this operator term by term. The leading-order term of the result is 
\[w^{s_0}(\log w)^j\Delta_z a_0(z,z'),\]
with all other terms lower order. This implies that $\Delta_z a_0(z,z')$ must be zero, and therefore that $a_0(z,z')$ is harmonic for each $z'$, nonvanishing for at least some open set of values of $z'$. By the maximum principle, $a_0(z,z')$ cannot decay at infinity. So the leading order term of $F(w,z,z')$ at zf is $w^{s_0}(\log w)^j$ times a term which \emph{does not vanish} at the left face lf$_0$. Since $w^{s_0}(\log w)^j$ has index set $(s_0,j)$ at lf$_0$, the index set of $F(w,z,z')$ at lf$_0$ must contain a term no better than $(s_0,j)$ -- in particular $F(w,z,z')$ cannot decay to any order better than $w^{s_0}(\log w)^j$ at lf$_0$. However, we already know that the leading order of $F(w,z,z')$ at lf$_0$ is $n$. Thus $s_0\geq n$, with $j=0$ if $s_0=n$. This shows that the leading order of the heat kernel at zf must actually be at least $n$, completing the proof of Theorem \ref{maincor}.

Note that the lack of sharpness in the order calculation of Theorem \ref{mainthm} reflects the fact that our real-analytic approach does not take into account the complex-analytic structure of the resolvent; there is cancellation between the top and bottom parts of the integral that our approach cannot see. In particular, we could instead move the contour $\Gamma$ towards the spectrum and represent the heat kernel as an integral with respect to the spectral measure. Guillarmou, Hassell, and Sikora demonstrate in a recent paper that there is cancellation between the top and bottom parts of the contour in the spectral measure \cite{ghs}. In particular, the spectral measure at zf vanishes to order $n-1$ (\cite{ghs}, Theorem 1.2); integrating $e^{-\lambda t}$ against this spectral measure, we obtain an alternative proof of the fact that the heat kernel vanishes to order $n$ at zf. 

\subsection{Proof of Lemma \ref{biglemma}}

We now prove Lemma \ref{biglemma};
the proof involves extensive use of Melrose's pullback and pushforward theorems.
First, write $T(s,z,z')$ as $T_{1}+T_{2}$, 
where $T_{1}$ is supported away from sc and $T_{2}$ is supported 
in a neighborhood of sc. This partition is illustrated
in Figure \ref{aseven}. Then decompose (\ref{genint}) 
into two integrals, corresponding to $T_{1}$ and $T_{2}$.

\begin{figure}
\centering
\includegraphics[scale=0.50]{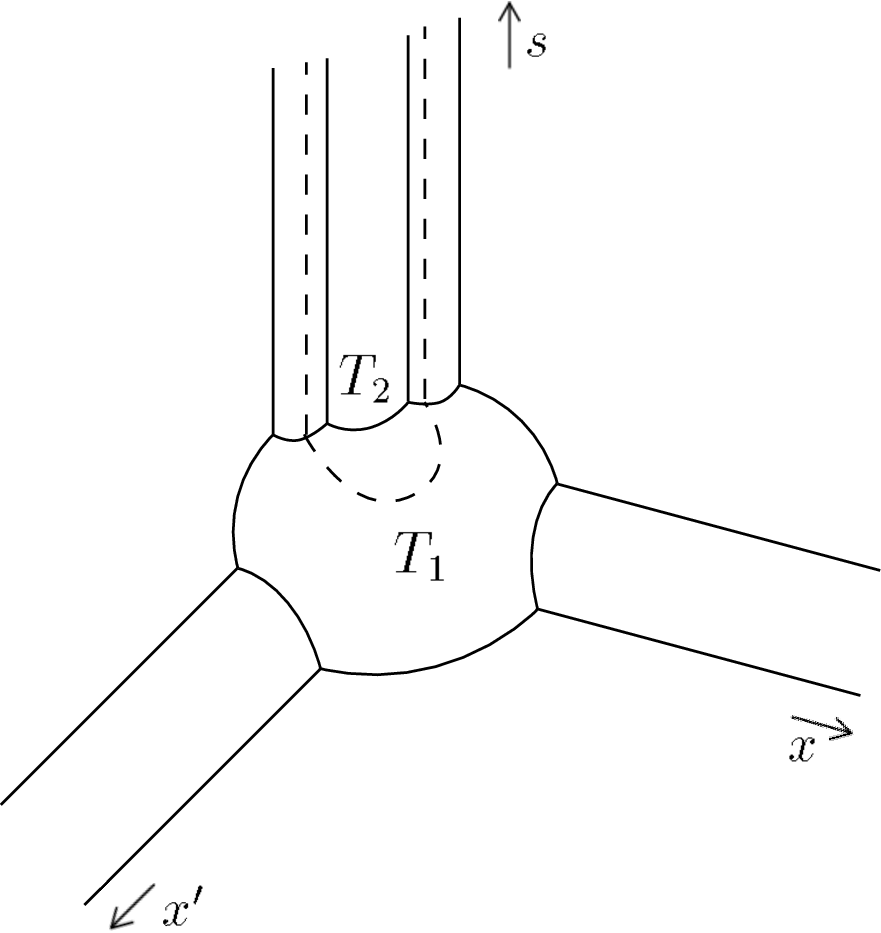}
\caption{Decomposition of $T$.}\label{aseven}
\end{figure}

Consider the first integral:
\begin{equation}\label{gammatwopartone}
\int_{0}^{\infty}\chi(\{s\geq w\})
2se^{-(s/w)^{2}e^{i\varphi}}T_{1}(s,z,z')\frac{ds}{s}.
\end{equation}
Notice that $T_{1}(s,z,z')$ is phg conormal on $M^2_{s,sc}$ but supported away from sc,
so it is in fact phg conormal on the blown-down space $M_{s,b}^{2}(z,z')$ (see Figure \ref{firstblowupgh}).
The rest of the terms in the integrand are phg conormal on $X_{b}^{2}(s,w)$, with a cutoff singularity (which is an example of a conormal singularity) at $s=w$. We now define a space $M_{s,w,b}^2(z,z')$ as follows: start with $[0,T)_{w}\times [0,\infty]_{s}\times M_{z}\times M_{z'}$. Then blow up, in order,
\begin{itemize}
\item The submanifold where all four of $(x,x',s,w)$ are zero;
\item The four now-disjoint submanifolds where exactly three of $(x,x',s,w)$ are zero;
\item The six now-disjoint submanifolds where exactly two of $(x,x',s,w)$ are zero.
\end{itemize}
This construction mimics the construction of the b-stretched product $X_b^4(x,x',s,w)$, and in fact $M_{s,w,b}^2(z,z')$ is precisely $X_b^4(x,x',s,w)\times N_y\times N_{y'}$ in a neighborhood of $\{x=x'=s=w=0\}$. By the same arguments as for the b-stretched products in \cite{ms}, the projection-induced maps from $M_{s,w,b}^2(z,z')$ to $M_{w,b}^2(z,z')$ (isomorphic to $X_b^3(x,x',w)\times N_y\times N_{y'}$ near $\{x=x'=w=0\}$) and to $X_b^2(s,w)$ are well-defined b-fibrations. Therefore, by the pullback theorem, the integrand of (\ref{gammatwopartone})
is phg conormal on $M_{s,w,b}^w(z,z')$, with
a conormal singularity at $s=w$. Since the fibers of the projection map to $M_{w,b}^2(z,z')$ are transverse to the singularity at $s=w$, and the integrand has order $\infty$ at $s=\infty$, the pushforward theorem implies that (\ref{gammatwopartone}) itself is phg conormal on $M^2_{w,b}(z,z')$. Since $M^2_{w,b}(z,z')$ is a blow-down of $M^2_{w,sc}(z,z')$, we conclude that (\ref{gammatwopartone}) is phg conormal on $M^2_{w,sc}(z,z')$ as desired.
\medskip

For $T_2$, we may use $(z,z')=(x,y,x',y')$ since $T_2$ is supported in a small neighborhood of sc. We have:
\begin{equation}\label{gammatwoparttwo}
(\int_{0}^{\infty}\chi(\{s\geq w\})
2se^{-(s/w)^{2}e^{i\varphi})}T_{2}(s,x,y,x',y')\frac{ds}{s}).
\end{equation}

Let $\bar{\sigma}=(\frac{x}{x'}-1,y-y')$; $\bar{\sigma}$ is an $n$-dimensional
coordinate, and $M^{2}_{s,sc}$ is created from $X_{b}^{3}(s,x,x')\times
N_{y}\times N_{y'}$ by blowing up $\{\bar\sigma=\frac{x}{s}=0\}$. In particular,
$T_{2}(s,x,y,x',y')$, having compact support in $x/x'$, is phg conormal on 
\[[X_{b}^{2}(s,x)\times N_{y};\{\bar\sigma=\frac{x}{s}=0\}].\]
This space is the subset of $M^{2}_{w,sc}$ with $1/2<x/x'<2$, so label
its boundary hypersurfaces bf, sc, bf$_{0}$, and zf. In this labeling,
$T_{2}$ is supported away from zf, decays to infinite order at bf,
and has leading orders $t_{sc}$ at sc and $t_{bf_{0}}$ at bf$_{0}$.

We analyze the integrand in (\ref{gammatwoparttwo})
as a function on the space
\[S=(X_{b}^{3}(s,w,x)\cap \{s\geq w\})\times B(\bar{\sigma})\times N_{y}.\]
Here $B$ is the unit ball in $\mathbb R^{n}$. A diagram of $S$ is given in
Figure \ref{thespaces},
with $\bar{\sigma}$ and $y$ suppressed; we label the boundary hypersurfaces
A-E.

\begin{figure}
\centering
\includegraphics[scale=0.60]{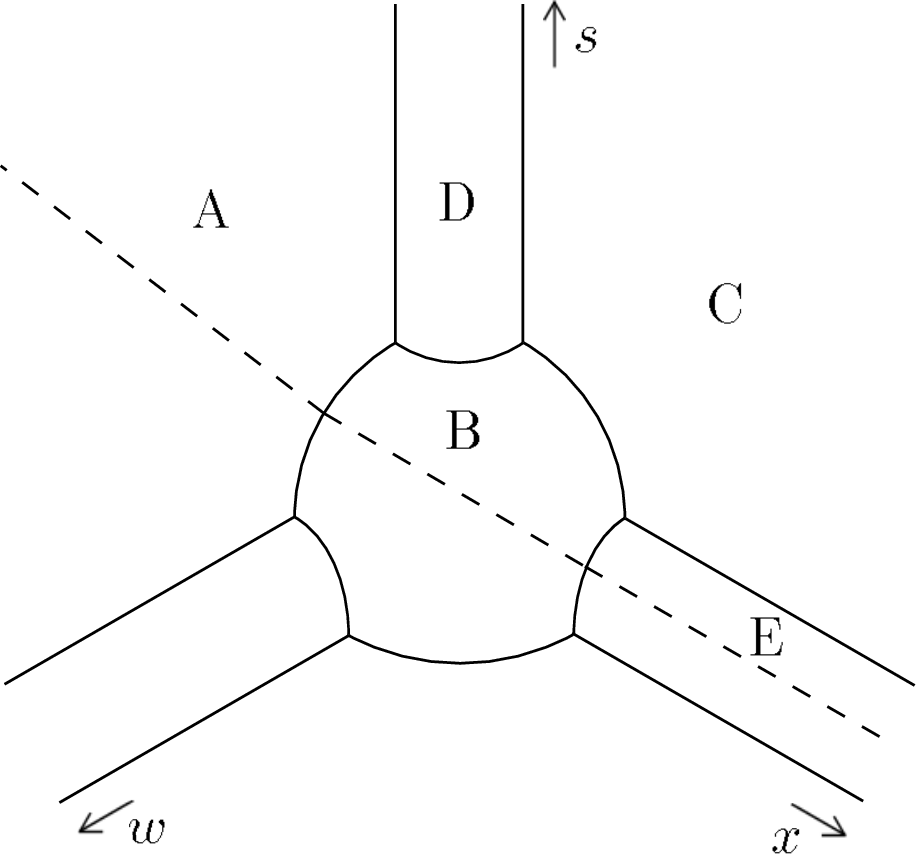}
\caption{The space $S$.}\label{thespaces}
\end{figure}

We now define an iterated blow-up of $S$.
Let $P_{1}$ be the p-submanifold of $S$
given by A$\cap \{\bar{\sigma}=0\}$. Blowing up $P_{1}$ creates a new space
$S_{1}=[S;P_{1}]$; call the front face of this blowup $F$.
Now let $P_{2}$ be the p-submanifold of $S_{1}$ given by the closure of the
lift of D$^{\circ}\cap\{\bar{\sigma}=0\}$. Then let 
\[S_{2}=[S_{1};P_{2}]=[[S;P_{1}];P_{2}],\]
and let $G$ be the new front face.
The following two propositions allow us to analyze
(\ref{gammatwoparttwo}); their proofs are deferred for the moment.
\begin{proposition}\label{propone} The map
\[\pi_{w}:S_{2}\cap\{s\geq x\}\rightarrow 
[(X_{b}^{2}(s,x)\cap \{s\geq x\})\times B_{1}(\bar{\sigma})
\times N_{y};\{\bar\sigma=\frac{x}{s}=0\}],\]
given in the interior of $S_{2}$ 
by projection off the variable $w$ and extending continuously to the
boundary, is a b-map.
\end{proposition}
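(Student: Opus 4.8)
The plan is to prove Proposition \ref{propone} by verifying the b-map property in each of the coordinate charts that cover $S_2 \cap \{s \geq x\}$, using the explicit blow-up coordinates on the source and target. The general strategy is the standard one for such statements: a map between manifolds with corners is a b-map if, in product-type coordinate charts adapted to the boundary, each target boundary defining function pulls back to a product of (nonnegative integer) powers of the source boundary defining functions times a smooth positive factor. So first I would enumerate the boundary hypersurfaces of $S_2 \cap \{s \geq x\}$: these are the lifts of A--E from $S$ (minus those cut off by $s \geq x$), together with the two new faces $F$ (from blowing up $P_1 = \mathrm{A} \cap \{\bar\sigma = 0\}$) and $G$ (from blowing up $P_2$). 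On the target side, the space $[(X_b^2(s,x)\cap\{s\geq x\})\times B_1(\bar\sigma)\times N_y; \{\bar\sigma = x/s = 0\}]$ has boundary hypersurfaces coming from $\{s=0\}$, $\{x=0\}$, $\{|\bar\sigma|=1\}$, and the front face of the blow-up of $\{\bar\sigma = x/s = 0\}$.

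Next I would introduce explicit coordinates. On the target, near the front face of the blow-up one uses coordinates such as $(x, s/x, \bar\sigma/(x/s))$ or the reciprocal versions depending on which region of the front face one is in; away from that face one just has $(s, x, \bar\sigma)$ with the b-stretched product $X_b^2$ coordinates $(s, x/s)$ or $(s/x, x)$. On the source, $S = (X_b^3(s,w,x) \cap \{s\geq w\}) \times B(\bar\sigma)\times N_y$ already comes with b-stretched product coordinates on the $(s,w,x)$ factor; then blowing up $P_1$ and $P_2$ introduces polar-type coordinates in the remaining directions. The key computational point is that the projection $\pi_w$ simply forgets $w$, so on the level of the underlying $(s,x,\bar\sigma,y)$ coordinates it is essentially the identity — the real content is checking that the blow-ups of the source (which involve $w$) map compatibly onto the blow-up of the target (which does not). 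Concretely, I would check that the bdf of the target front face $\{\bar\sigma = x/s = 0\}$ pulls back, under forgetting $w$, to an appropriate monomial in the bdfs of the source faces $F$, $G$, and the lifted $s$- and $x$-faces, with a smooth nonvanishing remainder; and similarly for the other target bdfs. The constraint $s \geq x$ is what guarantees that we stay in the region where the relevant coordinate ratios ($x/s$, etc.) are bounded and the blow-up of $P_2$ resolves exactly the locus the target blow-up resolves.

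The main obstacle I anticipate is bookkeeping rather than anything conceptually deep: one must carefully track how the two iterated blow-ups of $S$ interact near their common corners, and confirm that $P_2$ was chosen precisely so that its image under forgetting $w$ is the submanifold $\{\bar\sigma = x/s = 0\}$ being blown up in the target — so that no boundary hypersurface of $S_2 \cap \{s\geq x\}$ gets crushed to a corner or to lower dimension (which would violate the b-map condition, or even continuity). In particular I would pay attention to the faces $F$ and $G$: $F$ comes from $\mathrm{A} \cap \{\bar\sigma = 0\}$, where $\mathrm{A}$ is (presumably) the face $s = w$ or one of the $X_b^3$ faces involving $w$, so $F$ should map into one of the "old" target faces or the target front face, and I must verify the pullback exponents are nonnegative integers there. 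The continuity of the extension to the boundary, asserted in the statement, follows once the coordinate computations show the forget-$w$ map is well-defined as a b-map; there is no separate argument needed beyond checking that the blow-down relations are compatible. I would organize the verification region by region (near $s=0$, near $x=0$, near the target front face, near $|\bar\sigma|=1$), since in each region only a few boundary hypersurfaces are present and the coordinate formulas simplify, and then note that these regions cover $S_2 \cap \{s \geq x\}$.
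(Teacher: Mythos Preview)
Your approach is correct and would succeed; indeed the paper explicitly remarks that this chart-by-chart verification is how the proposition is proved in the author's thesis \cite{s}. The paper itself, however, takes a different and more structural route. Rather than computing in coordinates, it starts from the observation that the \emph{unblown-up} projection $\tilde\pi_w : S \to X_b^2(s,x)\times B(\bar\sigma)\times N_y$ is already a b-fibration (this comes from the general theory of b-stretched products in \cite{ms}). The key step is then to identify the preimage under $\tilde\pi_w$ of the target blow-up locus $\{\bar\sigma = x/s = 0\}$: it is precisely the union of the two p-submanifolds $P_1 = \mathrm{A}\cap\{\bar\sigma=0\}$ and $P_2 = \mathrm{D}\cap\{\bar\sigma=0\}$ that are blown up to form $S_2$. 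With this in hand, the paper simply invokes Lemma 10 of Hassell--Mazzeo--Melrose \cite{hmm}, which guarantees that when one blows up a p-submanifold in the target and its full preimage in the source, a b-fibration lifts to a b-fibration. This yields the stronger conclusion that $\pi_w$ is a b-fibration (hence a b-map) with essentially no computation.

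Your direct method has the advantage of being self-contained and producing the exponent matrix explicitly, which can be useful for index-set bookkeeping downstream. The paper's method is much shorter and, more importantly, explains conceptually why $P_1$ and $P_2$ are the correct submanifolds to blow up: they are forced by the requirement that the lifted map remain a b-fibration. Your uncertainty about which face $\mathrm{A}$ is would also be resolved immediately by this viewpoint, since $\mathrm{A}$ and $\mathrm{D}$ are singled out as exactly the two components of $\tilde\pi_w^{-1}\{x/s=0\}$.
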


\begin{proposition}\label{proptwo}  
The map
\[\pi_{s}:S_{1}\rightarrow 
[X_{b}^{2}(w,x)\times B_{1}(\bar{\sigma})\times 
N_{y};\{\bar\sigma=\frac{x}{w}=0\}],\]
given in the interior of $S_{1}$ 
by projection off the variable $s$ and extending continuously to the
boundary, is a b-fibration. Moreover, if we let $\rho_{H}$ be a bdf
for each hypersurface $H$, we have
\begin{equation}
(\pi_{s})^{*}(\rho_{zf})=\rho_{C}\rho_{E},
(\pi_{s})^{*}(\rho_{bf_{0}})=\rho_{B}\rho_{D},
(\pi_{s})^{*}(\rho_{sc})=\rho_{F},
(\pi_{s})^{*}(\rho_{bf})=\rho_{A}.\end{equation}
\end{proposition}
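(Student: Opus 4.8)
The plan is to verify the three defining properties of a b-fibration — that $\pi_{s}$ is a b-map, a b-submersion, and b-normal — by a direct computation in projective coordinate charts, reading off the four bdf identities along the way. The computation only needs to be carried out near the corner $\{x=w=0\}$ of $S_{1}$ and near the front face $F$; everywhere else $\pi_{s}$ is simply the product of the identity on $B_{1}(\bar\sigma)\times N_{y}$ with the projection $X_{b}^{3}(s,w,x)\to X_{b}^{2}(w,x)$ that forgets $s$, which is a b-fibration with the obvious bdf relations by Proposition 4.4 of \cite{ms}. Patching the charts at the end then finishes the proof.

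First I would set up the coordinates. On $(X_{b}^{3}(s,w,x)\cap\{s\ge w\})\times B(\bar\sigma)\times N_{y}$ near $\{x=w=0\}$, the constraint $s\ge w$ keeps $w/s\le 1$ bounded, so two families of projective coordinates cover the relevant region: one adapted to $s$ being comparable to the largest variable, with coordinates $(s,\ w/s,\ x/s,\ \bar\sigma,\ y)$, and one adapted to $x$ dominating, with coordinates $(x,\ s/x,\ w/x,\ \bar\sigma,\ y)$ (subject to $s/x\ge w/x$). In each chart I would identify the hypersurfaces $A$--$E$ of $S$ from Figure \ref{thespaces} together with a bdf for each, and then incorporate the blow-up $S_{1}=[S;P_{1}]$ with $P_{1}=A\cap\{\bar\sigma=0\}$: in the chart where $x$ is a bdf for $A$, $P_{1}=\{x=\bar\sigma=0\}$ and $S_{1}$ is covered by the two projective charts with coordinates $(x,\ \bar\sigma/x,\ \dots)$ and $(|\bar\sigma|,\ x/|\bar\sigma|,\ \bar\sigma/|\bar\sigma|,\ \dots)$, with $F=\{x=0\}$ and $F=\{|\bar\sigma|=0\}$ respectively, exactly as for the b-stretched products in \cite{ms}.

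Next, in each such chart I would write $\pi_{s}$ explicitly by deleting the $s$-coordinate (equivalently $w/s$ or $s/x$) and mapping the remaining coordinates into a corresponding projective chart on $[X_{b}^{2}(w,x)\times B_{1}(\bar\sigma)\times N_{y};\{\bar\sigma=x/w=0\}]$, whose four relevant hypersurfaces are zf $=\{w=0\}$, bf $=\{x=0\}$, bf$_{0}$ (the front face of the corner blow-up in $X_{b}^{2}$), and sc (the final front face). Three things then need checking. (i) \emph{$\pi_{s}$ is a b-map}: this reduces to the statement that $P_{1}$ maps onto the center $\{\bar\sigma=x/w=0\}$ of the target's last blow-up, so that the blow-up of $P_{1}$ in the domain is compatible with it and the extension from the interior is smooth, not merely continuous; in particular $F$ surjects onto sc. (ii) \emph{b-submersion}: immediate from the explicit formulas, since $\pi_{s}$ forgets a single coordinate and the induced map of b-tangent spaces is visibly surjective. (iii) \emph{b-normality}: the displayed identities $(\pi_{s})^{*}\rho_{sc}=\rho_{F}$, $(\pi_{s})^{*}\rho_{bf}=\rho_{A}$, $(\pi_{s})^{*}\rho_{zf}=\rho_{C}\rho_{E}$, and $(\pi_{s})^{*}\rho_{bf_{0}}=\rho_{B}\rho_{D}$ — which I would verify chart-by-chart by matching a bdf of each target hypersurface against the product of bdfs of the domain hypersurfaces lying over it — exhibit each $(\pi_{s})^{*}\rho_{H}$ as a product of bdfs of pairwise-distinct hypersurfaces of $S_{1}$, none of which is shared between two of the products; this is precisely the condition that no boundary hypersurface of $S_{1}$ is mapped into a corner of the target. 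Surjectivity and properness of $\pi_{s}$ (the $s$-fiber is the compact interval $[w,\infty]$) are clear, and the faces of $X_{b}^{3}$ not meeting the support under consideration — notably $\{s=\infty\}$ and $\{|\bar\sigma|=1\}$ — are harmless since $\pi_{s}$ is a product map near each of them.

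The step I expect to be the main obstacle is (i): confirming that the blow-up of $P_{1}$ in $S_{1}$ is \emph{exactly} matched to the blow-up of $\{\bar\sigma=x/w=0\}$ in the target, so that $\pi_{s}$ genuinely extends from the interior to a b-map across $F$ and across the intersections of $F$ with the lifts of $A$--$E$, and so that $F$ is carried onto sc rather than into a corner. This is intertwined with the bookkeeping of which of the a priori many boundary hypersurfaces of $X_{b}^{3}(s,w,x)$ actually appear under the constraint $s\ge w$ and the support restriction — which is what makes the face count close up at $A$--$E$ before the blow-up. Once the projective charts are in place, the b-submersion and b-normality checks, and hence the bdf formulas, are a routine unwinding of the coordinate expressions.
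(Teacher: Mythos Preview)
Your proposal is correct and is essentially the explicit local-coordinate verification that the paper itself acknowledges is carried out in the thesis \cite{s}. The paper, however, chooses a shorter abstract argument: it starts from the already-known b-fibration $\tilde\pi_{s}:S\to X_{b}^{2}(w,x)\times B(\bar\sigma)\times N_{y}$ (inherited from the b-stretched product machinery of \cite{ms}), observes that the preimage under $\tilde\pi_{s}$ of the blow-up center $\{\bar\sigma=x/w=0\}$ in the target is exactly $P_{1}=A\cap\{\bar\sigma=0\}$, and then invokes Lemma~10 of Hassell--Mazzeo--Melrose \cite{hmm}, which says that a b-fibration lifts to a b-fibration between the blown-up spaces in precisely this situation. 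This bypasses the chart-by-chart b-map/b-submersion/b-normal check that you outline --- in particular, your ``main obstacle'' (i) is exactly what Lemma~10 of \cite{hmm} packages once and for all. On the other hand, your approach is self-contained and delivers the four bdf identities as a direct byproduct of the coordinate computation, whereas the paper still has to defer those identities to a separate local-coordinate calculation (referring back to \cite{s}). So the two routes trade off: the paper's is quicker for the b-fibration claim but no shorter overall once the bdf relations are included; yours does everything in one pass at the cost of more bookkeeping.
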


Since $T_{2}$ is supported in $\{s\geq x\}$ and its support does not intersect the lift of $\{s=x\}$, Proposition \ref{propone}
and the pullback theorem imply that the pullback of 
$T_{2}$ is phg conormal on $S_{2}$. Moreover, the remainder of the integrand
in (\ref{gammatwoparttwo}) is phg conormal on $(X_{b}^{2}(s,w)\cap\{s>w\})$, so
pulling back first to $X_{b}^{3}(s,w,x)\cap\{s>w\}$, and then to $S_{2}$, we see
that it is phg conormal on $S_{2}$ as well. Therefore, the entire integrand
in (\ref{gammatwoparttwo}) is phg conormal on $S_{2}=[S_{1};P_2]$. However, the factor of $e^{-s^2/w^2}$, and hence the integrand, vanishes to infinite order at the front face G; consequently the integrand in (\ref{gammatwoparttwo}) is actually phg conormal on $S_{1}$.
By the pushforward theorem from \cite{emm}, since $\pi_{s}$ is a b-fibration transverse to the
conormal singularity at $s=w$, the pushforward (\ref{gammatwoparttwo})
is phg conormal on the target space 
$[X_{b}^{2}(w,x)\times B_{1}(\bar{\sigma})\times 
N_{y};\{\bar\sigma=\frac{x}{w}=0\}]$. From Figure \ref{rangespace}, we see that this space
is a subset of $M^{2}_{w,sc}$; we have therefore shown that (\ref{gammatwoparttwo})
is phg conormal on $M^{2}_{w,sc}$. This completes the proof of the polyhomogeneity statement in Lemma \ref{biglemma}, modulo the proofs of Propositions \ref{propone} and \ref{proptwo}.

\begin{figure}
\centering
\includegraphics[scale=0.60]{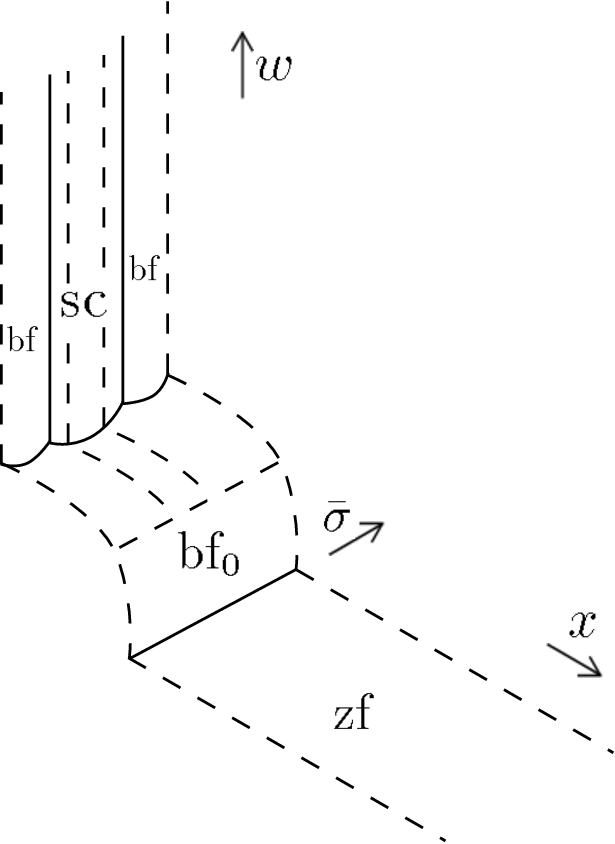}
\caption{$[X_{b}^{2}(w,x);\{\bar\sigma=\frac{x}{w}=0\}]$.}\label{rangespace}
\end{figure}

It remains to check the index sets claimed in Lemma \ref{biglemma}. However, this calculation is a straightforward application of the pullback and pushforward theorems (explicit descriptions of the pullback and pushforward index sets may be found in \cite{gri}). A computation of the leading orders may be found in \cite{s} and computing the index sets themselves is no harder.

\subsection{Propositions \ref{propone} and \ref{proptwo}}

Finally, we prove Propositions \ref{propone} and \ref{proptwo}. These propositions are proved in \cite{s} using explicit local coordinates, but here we instead give a simpler proof based on the machinery developed by Hassell-Mazzeo-Melrose in \cite{hmm}.

Observe first that there are projection-induced maps from $S$ to both $X_b^2(s,x)\times B(\bar\sigma)\times N_y$ and $X_b^2(w,x)\times B(\bar\sigma)\times N_y$. We call these maps $\tilde\pi_w$ and $\tilde\pi_s$ respectively. It is easy to see directly that both of these maps are in fact b-fibrations; see also the analysis of b-stretched products in \cite{ms}. Moreover, it may be checked by hand (also see \cite{ms}) that each entry of the 'exponent matrix' associated to each of these maps is either 0 or 1; see \cite{gri} or \cite{ma} for a discussion of exponent matrices.

To prove Proposition \ref{propone}, consider the p-submanifold $\{\bar\sigma=x/s=0\}$ of the target space of $\tilde\pi_w$. Its lift under $\tilde\pi_w$ is a union of two p-submanifolds of $S$: A$\ \cap\ \{\bar\sigma=0\}$ and D$\ \cap\ \{\bar\sigma=0\}$. $S_2$ is precisely the space we obtain from $S$ by blowing up those two p-submanifolds (first A, then D). We may therefore apply Lemma 10 in Section 2 of \cite{hmm} to conclude that the lift of $\tilde\pi_w$ to a map from $S_2$ to $[X_b^2(s,x)\times B(\bar\sigma)\times N_y; \{\bar\sigma=x/s=0\}]$ is a b-fibration; but this lift is precisely $\pi_w$. Since a b-fibration is certainly a b-map, this completes the proof of Proposition \ref{propone}.

Proposition \ref{proptwo} is proved in exactly the same way: the lift of $\{\bar\sigma=x/w=0\}$ to $S$ under $\tilde\pi_s$ is just A$\ \cap\ \{\bar\sigma=0\}$, which is precisely $P_1$. An identical application of Lemma 10 of \cite{hmm} allows us to conclude that $\pi_s$ is a b-fibration. The computation of the pullbacks of boundary defining functions is not hard and may be done directly using local coordinates; the details may be found in \cite{s}.

\section{Renormalized heat trace and zeta function}
In this section, we define the renormalized heat trace, zeta function, and determinant on an asymptotically conic manifold $M$. These definitions ultimately allow us, in \cite{s2}, to state and prove Theorem \ref{detapprox}. The first step is to define the \emph{renormalized trace}. 
This definition is inspired by Melrose's b-heat trace, which is a renormalized heat 
trace for manifolds with asymptotically cylindrical ends. 
Albin also defined renormalized heat traces in the
asymptotically hyperbolic setting \cite{alb}; later, Albin, Aldana, and Rochon
defined and investigated a renormalized determinant of the Laplacian
on asymptotically hyperbolic surfaces \cite{aar}.

\subsection{The renormalized heat trace}

Pick any cutoff function $\chi_{1}(r)$ on
$\mathbb R_{+}$ which is supported on $\{r\leq 2\}$ and equal to 
1 on $\{r\leq 1/2\}$. Assume that either

a) $\chi_{1}(r)$ is a non-increasing smooth function of $r$
(smooth cutoff), or

b) $\chi_{1}(r)$ is precisely the characteristic function of $[0,1]$
(sharp cutoff).

Then for any $\delta<1/2$,
let $\chi_{1,\delta}$ be a function on $M$, equal to $\chi_{1}(r\delta)$
for $r\geq 1$ and equal to 1 inside $\{r=1\}$. Consider the integral
\begin{equation}\label{partrenorm}
\int_{M}\chi_{1,\delta}(z)H^{M}(t,z,z)\ dz.\end{equation} 
We will prove the following theorem:
\begin{theorem}\label{totalrenorm} Let $\chi_1$ be either the smooth or the sharp cutoff.
The integral (\ref{partrenorm}) has a polyhomogeneous expansion in $\delta$ for each fixed $t$. Moreover, the finite part at $\delta=0$, which we denote $P(t)$, has polyhomogeneous expansions in $t$ at $t=0$ and $t^{-1}$ at $t=\infty$.
\end{theorem}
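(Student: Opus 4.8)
The plan is to split $M$ into a compact core and a collar neighbourhood of infinity, dispatch the core by classical means, and reduce the collar integral to a single application of Melrose's pullback and pushforward theorems, feeding in the heat-kernel structure of Theorems \ref{zshorttime} and \ref{maincor}. So I would fix a small $\epsilon_0$ and write $M=K\cup E$ with $K=\{x\geq\epsilon_0\}$ compact and $E\cong[0,\epsilon_0)_x\times N_y$ the collar, on which the Riemannian density is $\sqrt{\det h(x)}\,x^{-n-1}\,dx\,dy$ and, after a harmless $O(x)$ reparametrisation, $\chi_{1,\delta}=\chi_1(\delta/x)$. For $\delta<\epsilon_0/2$ the cutoff is identically $1$ on $K$, so
\[\int_M\chi_{1,\delta}(z)H^M(t,z,z)\,dz=\int_K H^M(t,z,z)\,dz+\int_E\chi_1(\delta/x)\,H^M(t,x,y,x,y)\,\sqrt{\det h(x)}\,x^{-n-1}\,dx\,dy.\]
The core term carries no $\delta$-dependence: as $t\to0$ it is the classical heat-trace expansion $(4\pi t)^{-n/2}\sum_{k\geq0}\bigl(\int_K a_k(z)\,dz\bigr)t^k$, and as $t\to\infty$ it is polyhomogeneous in $w=t^{-1/2}$ with leading order $n$, since restricting Theorem \ref{maincor} to the interior diagonal over the compact set $K$ (where $M^2_{w,sc}$ is just $[0,\epsilon)_w$ times a neighbourhood of the diagonal in $K\times K$) gives $H^M(t,z,z)=w^n(b_0(z)+O(w))$ uniformly there. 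So everything reduces to the collar integral.

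For the collar, the key observation is that restriction to the spatial diagonal tames the blow-up geometry: the lifted diagonal is an interior $p$-submanifold whose closure meets only the face sc for bounded time, and only sc, bf$_0$, zf for long time, and along it $x$ is a boundary defining function for sc. Restricting Theorem \ref{zshorttime} to the diagonal therefore shows that $(x,y,\sqrt t)\mapsto H^M(t,x,y,x,y)$ is polyhomogeneous on the plain product $[0,\epsilon_0)_x\times N_y\times[0,1)_{\sqrt t}$, with index set of order $-n$ at $\{\sqrt t=0\}$ (the usual $t^{-n/2}$ local expansion) and order at least $0$ at $\{x=0\}$; and restricting Theorem \ref{maincor} shows that $(x,y,w)\mapsto H^M(t,x,y,x,y)$ is polyhomogeneous on the blow-up of $[0,\epsilon_0)_x\times N_y\times[0,\epsilon_0)_w$ at the corner $\{x=w=0\}$ --- that corner blow-up carrying the three faces zf, bf$_0$, sc, separated by the ratio $w/x$ --- with orders at least $n$ at zf and bf$_0$ and at least $0$ at sc. All of these index sets are read off explicitly from the two theorems. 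Integrating $y$ out over the closed manifold $N$ is harmless, so the problem is effectively one-dimensional in $x$.

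Next I would form the iterated blow-up $B$ of $[0,\epsilon_0)_x\times[0,\epsilon_0)_\delta\times[0,1)_{\sqrt t}$, obtained by blowing up first the triple corner $\{x=\delta=\sqrt t=0\}$ and then the relevant codimension-two corners --- and similarly with $\sqrt t$ replaced by $w$, incorporating also the $\{x=w=0\}$ blow-up from the previous paragraph --- exactly in the spirit of the spaces $S,S_1,S_2$ of Section 2. On $B$ the cutoff $\chi_1(\delta/x)$ is polyhomogeneous (a smooth function of the boundary coordinate $\delta/x$), the density $\sqrt{\det h(x)}\,x^{-n-1}$ is polyhomogeneous of order $-n-1$ at the lift of $\{x=0\}$, and $H^M(t,x,y,x,y)$ pulls back polyhomogeneously by Melrose's pullback theorem applied to the projection onto the $(x,y,\sqrt t)$- (resp.\ $(x,y,w)$-) space. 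Integration in $x$ is a $b$-fibration from $B$ down to a blow-up of the $(\delta,\sqrt t)$- (resp.\ $(\delta,w)$-) quadrant, transverse to all faces; the $x$-integral converges because the cutoff confines its support to $\{x\gtrsim\delta\}$ (so for $\delta>0$ it is literally compactly supported), while the failure of the integrand to decay as $x\to0$ is precisely what produces the divergent $\delta$-expansion. Melrose's pushforward theorem then shows that the collar integral, hence (\ref{partrenorm}), is polyhomogeneous jointly in $(\delta,\sqrt t)$ near $t=0$, jointly in $(\delta,w)$ near $t=\infty$, and a fortiori in $\delta$ for each fixed $t$; the index sets --- in particular which terms are $\delta$-singular, $\delta$-positive, or $\delta^0(\log\delta)^0$ --- are computed by the standard pullback/pushforward rules, as in Section 2 and in \cite{s}.

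Finally, $P(t)$ is obtained by taking the coefficient of $\delta^0(\log\delta)^0$ in the joint expansion of (\ref{partrenorm}) and adding the core contribution. The step I expect to be the main obstacle is checking that this finite-part operation is well-defined and commutes with the time polyhomogeneity all the way down to $\sqrt t=0$ (resp.\ $w=0$): the pushed-forward integral lives on a blow-up of the $(\delta,\sqrt t)$- (resp.\ $(\delta,w)$-) quadrant, and near the corner $\{\delta=0\}\cap\{\sqrt t=0\}$ the joint expansion genuinely mixes the two variables, so one must verify that extracting the $\delta^0(\log\delta)^0$ coefficient at fixed positive $\sqrt t$ yields a function that is itself polyhomogeneous as $\sqrt t\to0$. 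I would handle this by running the pushforward on the full triple blow-up $B$ at once --- rather than renormalising in $\delta$ first and only then expanding in $t$ --- or, equivalently, by identifying the finite part with the constant term at $\nu=0$ of a meromorphic family $\nu\mapsto\int_M r^{-\nu}H^M(t,z,z)\,dz$ and tracking polyhomogeneity in $t$ through the continuation. Granting this, $P(t)$ is polyhomogeneous in $\sqrt t$ near $t=0$ with leading order $-n$ (so $P(t)\sim c\,t^{-n/2}$), and polyhomogeneous in $w=t^{-1/2}$ near $t=\infty$, with leading order $n$ forced by the zf and bf$_0$ orders together with the Cheng--Li--Yau bound of Theorem \ref{chengliyau}; everything else is routine pullback/pushforward bookkeeping.
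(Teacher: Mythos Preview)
Your approach is essentially the paper's: restrict the heat kernel to the diagonal via Theorems \ref{zshorttime} and \ref{maincor} (this is exactly Proposition \ref{thingtwo}), split off the compact core, and run pullback/pushforward on the collar integral. Three small points where you diverge from or overcomplicate the paper's argument:

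\textbf{Small time is simpler than you suggest.} Since Theorem \ref{zshorttime} restricted to the diagonal gives $H^M(t,x,y,x,y)$ polyhomogeneous on the \emph{plain product} $[0,\epsilon_0)_x\times[0,1)_{\sqrt t}$ (no blow-up between $x$ and $\sqrt t$), the integrand lives on $\mathbb R_+(\sqrt t)\times X_b^2(x,\delta)$, and the pushforward lands directly on the plain product $(\sqrt t,\delta)$. No triple blow-up is needed for small $t$; only the long-time side requires $X_b^3(w,x,\delta)\to X_b^2(w,\delta)$.

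\textbf{The sharp cutoff is not smooth.} You write that $\chi_1(\delta/x)$ is ``a smooth function of the boundary coordinate $\delta/x$,'' but the theorem covers both the smooth and the sharp cutoff, and in the sharp case $\chi_1$ has a jump at $\delta/x=1$. The paper handles this by treating the jump as a conormal (cutoff) singularity transverse to the fibres of the $x$-integration and invoking the pushforward theorem with interior conormal singularities from \cite{emm}. Your argument needs this amendment.

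\textbf{The finite-part step is not an obstacle.} Once the collar integral is polyhomogeneous on $\mathbb R_+(\sqrt t)\times\mathbb R_+(\delta)$ (small time) and on $X_b^2(w,\delta)$ (large time), $P(t)$ is by definition the coefficient of $\delta^0(\log\delta)^0$ at the $\delta=0$ face; coefficients in a polyhomogeneous expansion at one hypersurface are automatically polyhomogeneous on that hypersurface, with index sets inherited from the adjacent corners (cf.\ Lemma A.4 in \cite{ma}). No meromorphic regularisation or re-running of the pushforward is needed.
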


This theorem allows us to define the renormalized heat trace on an asymptotically conic manifold.
Roughly, this corresponds to integrating the heat kernel on the diagonal
over regions where $r\leq\delta^{-1}$, and renormalizing by subtracting the
divergent parts at $\delta=0$.
Renormalization in this fashion is often called Hadamard renormalization
(for details, see \cite{alb,aar}).

\begin{definition} Let $\chi_{1}(r)$ be the sharp cutoff.
The renormalized heat trace, denoted $^{R}Tr H^{M}(t)$, is 
the finite part at $\delta=0$ of (\ref{partrenorm}).\end{definition}

We now prove Theorem \ref{totalrenorm}. The key ingredient is the following observation on the structure of the heat kernel on the diagonal near the boundary, which is a consequence of the structure theorem we have proven for the heat kernel. The asymptotic structure of $H^{M}(t,x,y,x,y)$ reflected in this proposition is illustrated in Figure \ref{hkasymp}.

\begin{proposition}\label{thingtwo} 
a) For $t$ bounded above, 
$H^{M}(t,x,y,x,y)$ is phg conormal in $(\sqrt t,x)$, with smooth
dependence on $y$.

b) For $t$ bounded below,
let $w=t^{-1/2}$; then $H^{M}(t,x,y,x,y)$
is phg conormal as a function of $w$ and $x$ on $X_{b}^{2}(w,x)$, again with smooth dependence on $y$.
\end{proposition}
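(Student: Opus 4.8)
The plan is to deduce Proposition~\ref{thingtwo} directly from Theorem~\ref{maincor} and Theorem~\ref{zshorttime} by restricting the heat kernel to the spatial diagonal. The idea is that the diagonal $D \subset \bar M \times \bar M$ lifts to a p-submanifold of each of the blown-up double spaces, and restriction of a polyhomogeneous conormal distribution (with a conormal singularity transverse to that p-submanifold) to a p-submanifold is again polyhomogeneous conormal, with index sets inherited from those of the ambient function at the boundary hypersurfaces meeting the submanifold. Since the heat kernel has no conormal singularity at the diagonal, the restriction is completely unproblematic on that score.

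For part~(b), I would argue as follows. By Theorem~\ref{maincor}, for $t>T$ the heat kernel is phg conormal on $M^2_{w,sc}$ with $w=t^{-1/2}$. The closure $D_{w,sc}$ of the interior diagonal is a p-submanifold of $M^2_{w,sc}$; its boundary meets exactly the faces sc, bf$_0$, and zf (the faces lb, rb, bf, lb$_0$, rb$_0$ do not touch the diagonal, as is clear from Figure~\ref{ghspace}). Near a boundary point of the diagonal in the interior of bf$_0$, good coordinates are of the form $(x, \sigma, y, y', \kappa)$ from Section~2.2, in which $x$ is a bdf for bf$_0$ and $\kappa$ (equivalently $w/x$) is a bdf for zf; on the diagonal one has $\sigma=1$, $y=y'$, and the two remaining coordinates are $x$ and $w/x$. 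Thus the restriction is phg conormal in $(x, w/x)$, i.e.\ phg conormal on $X_b^2(w,x)$, with smooth dependence on $y$; the index sets at $w=0$ (zf) and $x=0$ (bf$_0$) are subsets of $R_{zf}+2$ and $R_{bf_0}+2$ from Theorem~\ref{mainthm}. Near sc$\cap$bf$_0$ one uses the coordinates $(X,Y,y,\lambda,w)$ in which the diagonal is $\{X=Y=0\}$; restricting gives a phg function of $(\lambda,w)$, again of the required form after noting $x=\lambda w$. Patching these local pieces with a partition of unity along the diagonal gives the global statement on $X_b^2(w,x)$.

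For part~(a), the same reasoning applies to Theorem~\ref{zshorttime}: for $t<1$ the heat kernel is phg conormal on $[M^2_{sc}(z,z')\times[0,1]_{\sqrt t}; \{\sqrt t = 0, z=z'\}]$, with a conormal singularity only at the face $F$ produced by the final blow-up of the diagonal. The diagonal (away from that blown-up corner) lifts to a p-submanifold whose boundary meets only the scattering front face sc and the faces involving $\sqrt t$; in the natural coordinates there, the surviving coordinates along the diagonal are $\sqrt t$ and $x$, so the restriction is phg conormal in $(\sqrt t, x)$ with smooth dependence on $y$. The only subtlety is the neighborhood of $F$: there the conormal singularity is transverse to the diagonal, so restriction to the diagonal is still well-defined and phg, and one checks in the final-blow-up coordinates (with the diagonal sitting at the center of $F$, where $\sqrt t$ serves as a bdf) that the result is phg in $(\sqrt t, x)$.

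The only real obstacle is bookkeeping: one must verify in each coordinate chart along the diagonal that the p-submanifold is genuinely a coordinate submanifold transverse to the conormal singularity (so that restriction is legitimate), and that the chosen boundary defining functions restrict correctly — e.g.\ that $w$ and $x$, or $\sqrt t$ and $x$, really do generate the relevant index sets after restriction. This is routine given the explicit coordinate systems recorded in Section~2.2 and in Theorem~\ref{zshorttime}, and since we already know the heat kernel has no conormal singularity \emph{at} the diagonal, no delicate analysis of the singularity is needed; the argument is purely a matter of tracking polyhomogeneous structure under restriction to a p-submanifold.
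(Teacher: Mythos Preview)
Your proposal is correct and follows exactly the paper's approach: the proof there is a one-line observation that the spatial diagonal $D$ is a p-submanifold of the space on which $H^M$ is polyhomogeneous (by Theorems~\ref{maincor} and~\ref{zshorttime}), so the restriction is polyhomogeneous, and $D$ is precisely the space pictured in Figure~\ref{hkasymp}. Your write-up simply unpacks this in coordinates; one small terminological slip is that there is no interior ``conormal singularity'' near $F$ to worry about---the heat kernel is genuinely polyhomogeneous there---so that paragraph can be shortened to the observation that the lifted diagonal is a p-submanifold meeting $F$ and sc, with $\sqrt t$ and $x$ restricting to boundary defining functions.
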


\begin{figure}
\centering
\includegraphics[scale=0.75]{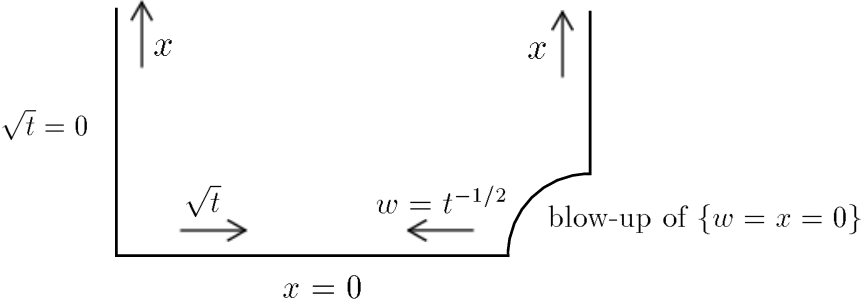}
\caption{Asymptotic structure of $H^{M}(t,x,y,x,y)$.}\label{hkasymp}
\end{figure}

\begin{proof} Proposition \ref{thingtwo} is an immediate consequence of restricting to the spatial diagonal $D$ in Theorem \ref{maincor}; since $D$ is a p-submanifold of the space on which $H^M$ is polyhomogeneous, the restriction of $H^M$ to $D$ is also polyhomogeneous. Comparing $D$ with Figure \ref{hkasymp}, we see that $D$ is precisely the space described in Proposition \ref{thingtwo}.
\end{proof}

To prove Theorem \ref{totalrenorm}, we analyze (\ref{partrenorm}), which may be rewritten as
\begin{equation}\label{thingthree}\int_{N}\int_0^1\chi_1(\delta/x)H^{M}(t,x,y,x,y)x^{-n-1}\ dx\ dy+\int_{x\geq 1}H^M(t,z,z)\ dz.\end{equation}
First analyze the second term; the region $\{x\geq 1\}$ is bounded away from spatial infinity. Therefore, by Theorem \ref{maincor}, $H^M(t,z,z)$ has polyhomogeneous expansions in $t$ at $t=0$ and $t^{-1/2}$, hence $t^{-1}$, at $t=\infty$, and these expansions are uniform in $z$ with smooth coefficients. Integrating in $z$ results in a function of $t$ which is phg conormal at $t=0$ and $t=\infty$; this function contributes only to the finite part $P(t)$ at $\delta=0$ and satisfies the polyhomogeneity claimed in Theorem \ref{totalrenorm}.

It remains to analyze the first term in (\ref{thingthree}).
We consider the small-$t$ and large-$t$ regimes separately, analyzing the
integrand \begin{equation}\label{integrand}\chi_1(\delta/x)H^M(t,x,y,x,y)x^{-n-1}\end{equation} in each regime as a function of $(t,x,\delta)$. In each case, $\chi_{1}(\delta/x)$ is phg
conormal on $X_{b}^{2}(x,\delta)$; if $\chi_1$ is the sharp cutoff, there is also a cutoff singularity,
which is a type of conormal singularity, at $\delta/x=1$.

For small $t$, $H^{M}(t,x,y,x,y)$ is phg conormal in $(\sqrt t,x)$, so (\ref{integrand}) is phg conormal on
$\mathbb R_{+}(\sqrt t)\times X_{b}^{2}(x,\delta)$, possibly with a conormal singularity at $\delta/x=1$. The projection
map $\pi_{x}$ is a b-fibration from this space onto 
the first quadrant in $(\sqrt t,\delta)$ and is transverse to $\delta/x=1$;
moreover, the integral in $x$ is well-defined, as the integrand is supported away from the $x=0,\delta>0$ face.
By the pushforward theorem from \cite{emm}, the first term of (\ref{thingthree}) is phg conormal in $(\sqrt t,\delta)$ for bounded $t$.

On the other hand, 
for large $t$, $H^{M}$ is phg conormal on $X_{b}^{2}(w,x)$ and $\chi_{1}$
is phg conormal on $X_{b}^{2}(x,\delta)$. Since the maps from $X_{b}^{3}
(w,x,\delta)$ to each of these spaces are b-maps (also b-fibrations), the
integrand is phg conormal on $X_{b}^{3}(w,x,\delta)$ by the pullback theorem; there may again be a conormal singularity at $\delta/x=1$. Integration in $x$ is pushforward by a b-fibration onto $X_{b}^{2}(w,\delta)$.
Again, the integrand is supported in $\{x>\delta\}$, and the fibration is transverse to $\delta/x=1$,
so we apply the pushforward theorem from \cite{emm} to conclude that the first term in (\ref{thingthree}) is phg conormal on $X_{b}^{2}(w,\delta)$
for bounded $w$.
Combining these results, we have shown that (\ref{thingthree}) is phg
conormal on the space in Figure \ref{hkasymptwo}.

\begin{figure}
\centering
\includegraphics[scale=0.75]{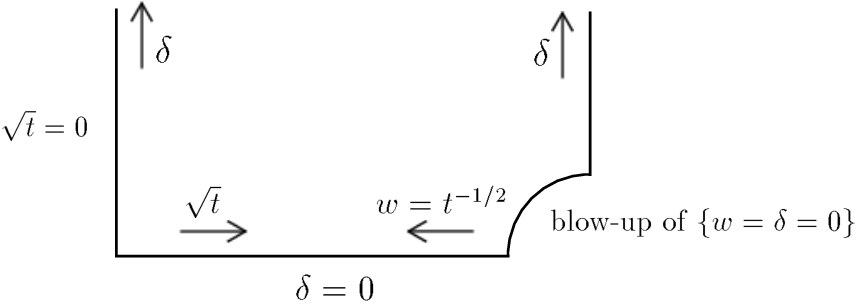}
\caption{Asymptotic structure of (\ref{thingthree}).}\label{hkasymptwo}
\end{figure}

In particular, for any fixed $t$, (\ref{thingthree}) has a polyhomogeneous expansion as $\delta\rightarrow 0$. Moreover, $P(t)$ is simply the coefficient of the $t^0$ term at the $(0<t<\infty,\delta=0)$ face. By the definition of phg conormality (also see the discussion surrounding Lemma A.4 in \cite{ma}), $P(t)$ therefore has polyhomogeneous conormal expansions at $t=0$ and $t=\infty$. This completes the proof of Theorem \ref{totalrenorm}.

\medskip

Using Theorem \ref{totalrenorm}, we now define the meromorphic continuation of the renormalized zeta function:
\begin{equation}\label{renzeta}
^{R}\zeta_{M}(s)=\frac{1}{\Gamma(s)}\int_{0}^{\infty}(^{R}Tr H^{M})(t)t^{s-1}\
dt.
\end{equation}

We break up the integral (\ref{renzeta})
at $t=1$, and consider first the short-time piece:
\begin{equation}\label{renzetashort}
\frac{1}{\Gamma(s)}\int_{0}^{1}(^{R}Tr H^{M}(t))t^{s-1}\ dt.\end{equation}
By the phg conormality of $^{R}Tr H^{M}(t)$, we can write for any $N>0$:
\[^{R}Tr H^{M}(t)=\sum_{i=0}^{k_{N}}a_{i}t^{z_{i}}(\log t)^{p_{i}}+
\mathcal O(t^{N}).\]
Plug this expansion into (\ref{renzetashort}).
The $\mathcal O(t^{N})$ contribution is well-defined and meromorphic whenever
$\Re s>-N$, and the continuations of the other terms are integrals of the
form
\[\frac{a_{i}}{\Gamma(s)}\int_{0}^{1}t^{z_{i}+s-1}(\log t)^{p_{i}}.\]
These integrals may be evaluated directly, and give explicit meromorphic
functions of $s$, each  with finitely many poles. Therefore,
(\ref{renzetashort}), though initially defined
only when $\Re s>-z_{0}$, has a meromorphic extension to
all of $\mathbb C$.

On the other hand, the long-time piece is
\begin{equation}\label{renzetalong}
\frac{1}{\Gamma(s)}\int_{1}^{\infty}(^{R}Tr H^{M}(t))t^{s-1}\ dt.\end{equation}
Writing $u=1/t$ and substituting, this becomes
\[\frac{1}{\Gamma(-(-s))}\int_{0}^{1}(^{R}Tr H^{M}(1/u))u^{(-s)-1}\ du.\]
We have a phg conormal expansion for $^{R}Tr H^{M}(1/u)$ as $u\rightarrow 0$;
say the leading order term is of the form $u^{z_{\infty}}(\log u)^{p}$.
Proceeding exactly as in the analysis of (\ref{renzetashort}), we conclude
that (\ref{renzetalong}), though initially defined only when 
$\Re(-s)>-z_{\infty}$, has a meromorphic continuation to all of $\mathbb C$.

This allows us to define the renormalized zeta function and determinant on any asymptotically conic manifold $M$.

\begin{definition} The renormalized zeta function on $M$, $^{R}\zeta_{M}(s)$,
is given by the meromorphic continuation of (\ref{renzeta}). \end{definition}
Depending on the orders, there may be no $s$ in $\mathbb C$
for which (\ref{renzeta}) is defined; however, once we split the integral
at $t=1$, both pieces are defined in half-planes and continue meromorphically
to all of $\mathbb C$.

\begin{definition} The \emph{renormalized
determinant} of the Laplacian on $M$ 
is $e^{-^{R}\zeta_{M}'(0)}$, where $^{R}\zeta_{M}'(0)$
is the coefficient of $s$ in the Laurent series for $^{R}\zeta_{M}(s)$ at
$s=0$. \end{definition}

\subsection{Manifolds conic near infinity}

We now specialize to the case of manifolds which are precisely conic outside a compact set. In particular, let $Z$ be any asymptotically conic manifold without boundary which is isometric to a cone outside a compact set. Without loss of generality, assume that $Z$ is isometric to a cone when $r\geq 1/2$. We examine the asymptotic expansion of 
$\int_{Z}\chi_{1,\delta}(z)H^{Z}(t,z,z)\ dz$ as $\delta\rightarrow 0$; the finite part is precisely the renormalized heat trace. However, for applications, such as the study of conic degeneration in \cite{s2}, we are also interested in identifying the divergent terms in the expansion. The fact that $Z$ is conic near infinity allows us to identify those terms:

\begin{theorem}\label{divexpansion} Let $Z$ be conic near infinity as above, and let $\chi_1$ be the sharp cutoff. Then
$\int_{Z}\chi_{1,\delta}(z)H^{Z}(t,z,z)\ dz$ has the following asymptotic expansion as $\delta\rightarrow 0$:
\begin{equation}\label{sharpexp}\int_{Z}\chi_{1,\delta}(z)H^{Z}(t,z,z)\ dz=
\sum_{k=0}^{n-1}f_{k}(t)\delta^{k-n}
+f_{log}(t)\log\delta+^{R}Tr H^{Z}(t)+R(\delta,t).\end{equation}
Here $R(\delta,t)$ goes to zero as $\delta$ goes to zero for each fixed $t$. Moreover, if we let $u_k(1,y)$ be the coefficient of $t^(k-n)/2$ in the short-time heat expansion on $C_N$ at the point $(1,y)$, then $f_{k}(t)=(\int_{N}u_{k}(1,y)\ dy)\frac{t^{(k-n)/2}}{k-n}$ and
$f_{\log}(t)=-\int_{N}u_{n}(1,y)\ dy$. \end{theorem}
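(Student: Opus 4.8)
The plan is to decompose the integral $\int_Z\chi_{1,\delta}(z)H^Z(t,z,z)\,dz$ into the compact region $\{r\le 1/2\}$ (equivalently $\{x\ge 2\}$ in a boundary defining function) and the conic region $\{r\ge 1/2\}$, where $Z$ is exactly a cone $C_N=(0,\infty)_r\times N$ with metric $dr^2+r^2h_0$. On the compact piece the cutoff is identically $1$ and the integral is a smooth function of $t$ (indeed with the usual short-time heat expansion), contributing only to the finite part $^{R}\mathrm{Tr}\,H^Z(t)$; this requires no work beyond Theorem \ref{maincor} and the standard short-time result. All the divergent terms in $\delta$ come from the conic region, so the heart of the proof is to analyze
\[
\int_N\int_{1/2}^{1/\delta} H^{C_N}(t,r,y,r,y)\,r^{n-1}\,dr\,dy
\]
(after cutting off smoothly near $r=1/\delta$ via $\chi_1(\delta r)$; since we take the sharp cutoff, this is literally the characteristic function of $r\le 1/\delta$).

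The key structural input is the exact scaling symmetry of the heat kernel on a cone: $H^{C_N}(t,r,y,r',y')=\lambda^{-n}H^{C_N}(t/\lambda^2,r/\lambda,y,r'/\lambda,y')$ for $\lambda>0$. Setting $\lambda=r$ on the diagonal gives $H^{C_N}(t,r,y,r,y)=r^{-n}H^{C_N}(t/r^2,1,y,1,y)$. Therefore the $r$-integrand becomes $r^{-1}H^{C_N}(t/r^2,1,y,1,y)$, and substituting $\tau=t/r^2$ (so $r^{-1}dr=-\tfrac12\,d\tau/\tau$) converts the $r$-integral over $[1/2,1/\delta]$ into a $\tau$-integral over $[t\delta^2,4t]$ of $H^{C_N}(\tau,1,y,1,y)$ against $\tfrac12\,d\tau/\tau$. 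The divergence as $\delta\to0$ is now an honest small-$\tau$ divergence, controlled by the short-time heat expansion on $C_N$ at the fixed point $(1,y)$: $H^{C_N}(\tau,1,y,1,y)\sim\sum_{k\ge0}u_k(1,y)\tau^{(k-n)/2}$. Integrating $\tfrac12 u_k(1,y)\tau^{(k-n)/2-1}\,d\tau$ from $t\delta^2$ to $4t$ produces, for $k\ne n$, a term $\propto (t\delta^2)^{(k-n)/2}=t^{(k-n)/2}\delta^{k-n}$ — these are the $f_k(t)\delta^{k-n}$ — and for $k=n$ a logarithm $-\tfrac12 u_n(1,y)\log(t\delta^2)=-u_n(1,y)\log\delta+(\text{smooth in }t)$, giving $f_{\log}(t)$. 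Tracking the constant $\tfrac12$, the sign, and the factor $(k-n)$ from the antiderivative reproduces exactly $f_k(t)=\big(\int_N u_k(1,y)\,dy\big)\tfrac{t^{(k-n)/2}}{k-n}$ and $f_{\log}(t)=-\int_N u_n(1,y)\,dy$; the $k>n-1$ tail of the short-time expansion, together with the upper endpoint $\tau=4t$ and the remainder in Borel summation, is uniformly $O(1)$ in $\delta$ and gets absorbed into $^{R}\mathrm{Tr}\,H^Z(t)+R(\delta,t)$ with $R(\delta,t)\to0$.

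The step I expect to be the main obstacle is matching the remainder-and-renormalization bookkeeping precisely: one must check that the ``finite part'' left over after extracting $\sum_{k=0}^{n-1}f_k(t)\delta^{k-n}+f_{\log}(t)\log\delta$ really coincides with the $^{R}\mathrm{Tr}\,H^Z(t)$ as defined (the $\delta^0$ coefficient of the polyhomogeneous $\delta$-expansion from Theorem \ref{totalrenorm}), rather than differing by a $\delta$-independent function of $t$ hidden in the endpoint contributions at $\tau=4t$ or in the compact region. This is a matter of comparing two definitions of the finite part and using uniqueness of the polyhomogeneous expansion guaranteed by Theorem \ref{totalrenorm}; once the scaling reduction above is in place it is routine, but it is the only genuinely non-mechanical point. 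A secondary, purely technical point is justifying the interchange of the $y$-integral with the $\tau$-expansion, which is immediate from the smoothness and uniformity of the short-time heat coefficients $u_k(\cdot,y)$ on the compact cross-section $N$.
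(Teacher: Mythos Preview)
There is a genuine gap in your reduction. When you pass from the integral over the conic region of $Z$ to
\[
\int_N\int_{1/2}^{1/\delta} H^{C_N}(t,r,y,r,y)\,r^{n-1}\,dr\,dy,
\]
you have silently replaced $H^Z$ by $H^{C_N}$. The scaling identity $H(t,r,y,r,y)=r^{-n}H(t/r^2,1,y,1,y)$ that drives your computation holds for the heat kernel on the \emph{exact} cone $C_N$, not for $H^Z$: even though the metric of $Z$ coincides with that of $C_N$ on $\{r\ge 1/2\}$, the heat kernel is a global object and $H^Z\neq H^{C_N}$ on that region. So as written, the argument does not apply to the integral you actually need to analyze.

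What is missing is precisely the comparison step the paper supplies: a lemma (proved by a supersolution/maximum-principle argument on $\hat Z=Z\cap\{r\ge 1\}$) showing that $|H^Z(t,z,z)-H^{C_N}(t,z,z)|$ decays to infinite order as $|z|\to\infty$, hence the difference of the two cut-off integrals converges as $\delta\to 0$ and contributes only to the finite part. Once that lemma is in place, your scaling-and-short-time-asymptotics computation on $C_N$ is exactly the right way to identify the divergent terms, and in fact matches the paper's argument essentially verbatim (the paper's change of variables $s=t/r^2$ is your $\tau$). Your concern about bookkeeping the finite part is well placed but secondary; the substantive missing idea is the $H^Z$ versus $H^{C_N}$ comparison.
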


We now prove Theorem \ref{divexpansion}. Note first that $\int_Z\chi_{1,\delta}(z)H^Z(t,z,z)$ does in fact have a polyhomogeneous expansion in $\delta$, by Theorem \ref{totalrenorm}, so it is just a matter of identifying the terms.
The proof involves a comparison of the heat kernels
on $Z$ and on $C_{N}$; $Z$ and $C_{N}$ are identical near infinity,
which allows us to formulate and prove the following lemma:

\begin{lemma} Let $C_{N}$ be the infinite cone over $N$. 
Then $|H^{C_{N}}(t,z,z)-H^{Z}(t,z,z)|$, defined whenever $r\geq 1$,
decays to infinite order in $|z|$ as $|z|$ goes to infinity.
\end{lemma}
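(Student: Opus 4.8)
The plan is to compare the heat kernels on $Z$ and $C_N$ via the finite propagation speed / parametrix difference technique, exploiting the fact that $Z$ and $C_N$ are \emph{isometric} on $\{r \geq 1/2\}$. I would first set up a cutoff $\psi$ supported in $\{r > 1/2\}$ and equal to $1$ on $\{r \geq 1\}$, and use $\psi$ to glue the two heat kernels: since $\Delta_Z$ and $\Delta_{C_N}$ agree as differential operators on the region where $\psi \neq 0$, the function $u(t,z,z') = \psi(z)(H^{C_N} - H^Z)(t,z,z')$ satisfies a heat equation $(\partial_t + \Delta_Z)u = [\Delta_{C_N},\psi] H^{C_N} - [\Delta_Z, \psi] H^Z = E(t,z,z')$ where the commutator error $E$ is supported in the transition region $\{1/2 \leq r \leq 1\}$, a compact set bounded away from infinity. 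By Duhamel's principle, $u(t,z,z')$ is then expressible as $-\int_0^t H^Z(t-\tau) E(\tau) \, d\tau$ acting appropriately, so the difference $H^{C_N}(t,z,z) - H^Z(t,z,z)$ for $r \geq 1$ equals (up to the trivial cutoff factor) a convolution of the $Z$-heat kernel with an error term supported where $r \leq 1$.

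The key estimate is then a Gaussian off-diagonal bound. On $Z$, which has bounded sectional curvature, Theorem \ref{chengliyau} gives $H^Z(t,z,z') \leq C_1 t^{-n/2} e^{-|z-z'|^2/(C_2 t)}$; the same bound holds on $C_N$ for $r$ large since $C_N$ is asymptotically conic (or one simply notes $C_N$ also has the requisite geometry away from the cone point). Combining these with the fact that $E(\tau, z', z'')$ involves $H^{C_N}(\tau)$ and its first derivatives localized to $\{r(z') \leq 1\}$ — which also obey Gaussian bounds, including for the derivatives, by standard parabolic estimates — the Duhamel integral for $z$ with $r(z) = R$ large picks up a factor $e^{-(R-1)^2/(C t)}$ from the distance between $z$ and the support of the error. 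For fixed $t$ this decays faster than any power of $R$, hence faster than any power of $|z| \sim R$, which is exactly the claimed infinite-order decay. One should be slightly careful that the Duhamel integral runs $\tau \in [0,t]$ and at $\tau$ near $t$ the factor $1/(t-\tau)$ could blow up, but the spatial separation forces the whole integrand to be exponentially small uniformly, and the singularity at $\tau = t$ is integrable in any case since $|z - z'|$ stays bounded below.

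The main obstacle, and the step requiring the most care, is controlling the derivatives of $H^{C_N}$ appearing in the commutator error $E$ uniformly and with Gaussian decay — this is where one needs parabolic regularity estimates (e.g. local parabolic Schauder or Li--Yau type gradient estimates) rather than just the bare heat kernel bound, and one must verify these hold on $C_N$ on the relevant region $\{1/2 \leq r \leq 1\}$, which is a fixed compact set away from the cone tip where the geometry is perfectly smooth. A clean alternative that sidesteps derivative estimates entirely is to work with the \emph{resolvents} instead: write $(H^{C_N} - H^Z)(t)$ via the contour integral (\ref{relation}) and compare $(\Delta_{C_N}+\lambda)^{-1}$ with $(\Delta_Z + \lambda)^{-1}$ using the resolvent identity and a cutoff, reducing to the statement that the difference of resolvents has a kernel decaying rapidly in $|z|$ for $r(z)$ large — which follows because the two operators agree near infinity and elliptic estimates are easier to propagate. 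Either way the conceptual content is the same: \emph{the difference solves an equation whose inhomogeneity lives in a compact region, and both heat kernels have Gaussian spatial decay, so the difference inherits super-polynomial decay at infinity.} I would present the Duhamel version as the main argument, remarking that the derivative bounds are standard interior parabolic estimates on the fixed compact collar.
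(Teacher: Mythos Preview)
Your Duhamel/cutoff-gluing argument is correct and yields the lemma (indeed with the stronger Gaussian decay $e^{-c|z|^2/t}$), once you supply two ingredients you flag yourself: Gaussian upper bounds for $H^{C_N}$, which need a word of justification since the cone tip is singular and Cheng--Li--Yau does not apply directly (the explicit Cheeger Bessel-function representation, or known Li--Yau-type bounds on cones, does the job); and first-derivative Gaussian bounds on the collar for the commutator term, which as you say follow from interior parabolic regularity. The paper takes a rather different and more elementary route that sidesteps both issues. It works on the common region $\hat Z=\{r\geq 1\}$, where the difference $H^{C_N}-H^{Z}$ solves the heat equation with zero initial data and boundary data $H^{C_N}(t,1,y,z')-H^{Z}(t,1,y,z')$ on $\{r=1\}$; the only analytic input required is that this boundary data is \emph{bounded} uniformly for $t<T$ and $|z'|>2$, which the paper obtains from the scaling symmetry of $C_N$ (reducing to continuity of the cone heat kernel near the tip) together with the structure theorem for $H^{Z}$. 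An explicit radial supersolution built from shifted Euclidean heat kernels and the parabolic maximum principle then force infinite-order decay in $|z|$. The paper's approach is lighter---no off-diagonal Gaussian estimates, no derivative bounds, no appeal to cone heat-kernel theory---but is tailored to the exact-cone-near-infinity hypothesis; your Duhamel argument is heavier but more robust and would adapt more readily to perturbations of the conic end.
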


\begin{proof} 
Let $\hat Z=Z\cap\{r\geq 1\}$. It is a
complete manifold with boundary at $r=1$, and is a subset of both $C_{N}$ and $Z$.
On $\hat Z$, $H^{C_{N}}(t,z,z')-H^{Z}(t,z,z')$ is a solution
of the heat equation for each $z'$, 
with initial data equal to zero and boundary data at $r=1$ given
by $H^{C_{N}}(t,1,y,z')-H^{Z}(t,1,y,z')$.
Fix any $T>0$. We claim that for all $t<T$, $y\in N$, 
and $z'$ with $|z'|>2$, there is a constant $K$ so that
the absolute value of the boundary data is less than $K$.

We show this for $H^{C_{N}}(t,1,y,z')$ and for $H^{Z}(t,1,y,z')$
separately. For $C_{N}$, by scaling and noting that $|z'|<2$,
\[H^{C_{N}}(t,1,y,z')=|z'|^{-n}H^{C_{N}}(t/|z'|^{2},1/|z'|,y,z'/|z'|)
<H^{C_{N}}(t/|z'|^{2},1/|z'|,y,1,y').\]
For each fixed $y'$, the heat kernel with point
source at $(1,y')$ is continuous for $r<1/2$ (i.e. the tip of the cone),
and hence is bounded for $r<1/2$ and for $t/|z'|^{2}<t<T$
by some universal constant $K$. Since $y'$ varies only over a compact set, 
the proof is complete.
As for $H^{Z}(t,1,y,z')$, consider the region 
\[W=\{(t,1,y,x',y')|\ t<T,x'<\frac{1}{2},y\in N,y'\in N\}\]
as a subset of $(t,x,y,x',y')$ space.
The kernel $H^{Z}$ has infinite-order decay at each boundary
hypersurface of the space in Figure \ref{hkac} with which $W$ has nontrivial
intersection. We conclude that $H^{Z}$ is bounded
on $W$, so there is a constant $K$ so that $H^{Z}(t,1,y,z')<K$ for all $t<T$, all $y\in N$, and all $z'$ with $|z'|>2$.

Since we have an upper bound for the boundary data,
we can construct a supersolution and apply the parabolic maximum principle.
Let $g(t,r)$ be the solution of the heat equation on $\hat Z$ with zero
initial condition and boundary data at $r=1$ equal to $K$ for all $t$.
By the maximum principle, we see that, uniformly for $|z'|>2$ and
$t<T$, $|H^{C_{N}}(t,z,z',t)-H^{Z}(t,z,z')|<g(t,|z'|)$.
We claim that $g(t,r)$ decays
to infinite order in $r$, uniformly in $t$ for $t<T$.
This can be seen either from Bessel function expansions
or by constructing a further
supersolution $\hat g(t,r)$
modeled on the heat kernel on $\mathbb R^{n}$. In particular, we can use
\[\hat g(t,r)=\frac{K/\alpha}{(4\pi)^{n/2}}
\sum_{k=-1}^{T_{0}}\frac{1}{(t-k)^{n/2}}e^{-\frac{r^{2}}{4(t-k)}}
\chi_{\{t>k\}},\]
where $\alpha=(8\pi)^{n/2}e^{1/4}$ and $T_{0}$ is the greatest integer less
than or equal to $T$. This supersolution has the uniform exponential decay
property we want, so a final application of 
the parabolic maximum principle finishes the proof of the lemma.
\end{proof}

The following corollary is an immediate consequence:

\begin{corollary} For any fixed $t$ and any $\chi_{1,\delta}$ (either
a sharp cutoff or a smooth cutoff),
\begin{equation}\label{difference}
|\int_{C_{N}} \chi_{1,\delta}(z)H^{C_{N}}(t,z,z)\ dz-
\int_{Z} \chi_{1,\delta}(z)H^{Z}(t,z,z)\ dz|\end{equation}
converges as $\delta\rightarrow 0$.
\end{corollary}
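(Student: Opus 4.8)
The plan is to reduce the quantity inside (\ref{difference}) to a single integral of the \emph{difference} $H^{C_{N}}(t,z,z)-H^{Z}(t,z,z)$ over the common conic end $\{r\geq 1\}$, on which the Lemma just proved gives rapid decay, and then to pass to the limit $\delta\to 0$ by dominated convergence.

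First I would split each of the two regularized integrals at $r=1$. For $\delta<1/2$ we have $r\delta<1/2$ on $\{r\leq 1\}$, hence $\chi_{1,\delta}\equiv 1$ there, so
\[\int_{C_{N}}\chi_{1,\delta}(z)H^{C_{N}}(t,z,z)\ dz=\int_{\{r\leq 1\}}H^{C_{N}}(t,z,z)\ dz+\int_{\{r\geq 1\}}\chi_{1,\delta}(z)H^{C_{N}}(t,z,z)\ dz,\]
and likewise for $Z$. The two ``core'' integrals over $\{r\leq 1\}$ are finite numbers independent of $\delta$: in $Z$ the region is compact and $H^{Z}(t,\cdot,\cdot)$ is smooth, and in $C_{N}$ the scaling identity $H^{C_{N}}(t,r,y,r,y)=t^{-n/2}H^{C_{N}}(1,r/\sqrt t,y,r/\sqrt t,y)$, together with the boundedness of the cone heat kernel near the tip for fixed positive time, shows that $H^{C_{N}}(t,\cdot,\cdot)$ is bounded on the finite-volume region $\{r\leq 1\}$. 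Their difference thus contributes a constant, independent of $\delta$, to (\ref{difference}).

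On $\{r\geq 1\}$ the manifolds $C_{N}$ and $Z$ are isometric and $\chi_{1,\delta}$ is literally the same function on both, so the remaining contribution to (\ref{difference}) equals $\int_{\{r\geq 1\}}\chi_{1,\delta}(z)\bigl(H^{C_{N}}(t,z,z)-H^{Z}(t,z,z)\bigr)\ dz$. By the Lemma, the integrand is $O(|z|^{-N})$ for every $N$; choosing $N>n$ and recalling that the volume element on the conic end grows like $r^{n-1}\,dr$, the function $H^{C_{N}}(t,z,z)-H^{Z}(t,z,z)$ is absolutely integrable over $\{r\geq 1\}$. Since $0\leq\chi_{1,\delta}\leq 1$ and $\chi_{1,\delta}(z)\to 1$ pointwise as $\delta\to 0$, dominated convergence gives that this integral converges (to $\int_{\{r\geq 1\}}(H^{C_{N}}(t,z,z)-H^{Z}(t,z,z))\ dz$), and adding back the constant core term yields convergence of (\ref{difference}). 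The argument is essentially routine once one has the Lemma; the only point that calls for an explicit remark is the boundedness of $H^{C_{N}}(t,\cdot,\cdot)$ on the core region near the cone tip, which follows, via the scaling identity, from estimates of the same kind already used in the proof of the Lemma, so there is no real obstacle.
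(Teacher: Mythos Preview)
Your argument is correct and is exactly the ``immediate consequence'' the paper has in mind: split at $r=1$, note the two core contributions are $\delta$-independent constants, and on the common conic end use the Lemma together with dominated convergence. The only point you flagged---boundedness of $H^{C_{N}}(t,\cdot,\cdot)$ on $\{r\leq 1\}$---is handled precisely as you say, via conic scaling and the continuity of the cone heat kernel at the tip for fixed positive time (indeed the paper's proof of the Lemma already invokes this continuity).
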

It now suffices to show that 
\[\int_{C_{N}}\chi_{1,\delta}(z)H^{C_{N}}(t,z,z)\ dz\]
has a divergent asymptotic expansion of the form claimed in Theorem
\ref{divexpansion},
as (\ref{difference}) converges as $\delta\rightarrow 0$ and hence
contributes only to the finite part of the expansion. (Recall that $\chi$ is the sharp cutoff).

\begin{lemma} Fix $t$. The divergent terms in the expansion of 
$\int_{|z|\leq 1/\delta}H^{C_{N}}(t,z,z)$
as $\delta\rightarrow 0$ are given by:
\[\sum_{k=0}^{n-1}(\int_{N}u_{k}(1,y)\ dy)\frac{t^{(k-n)/2}}{k-n}\delta^{k-n}
-C\log\delta,\] where $C$ is equal to $\int_{N}u_{n}(1,y)\ dy$.
\end{lemma}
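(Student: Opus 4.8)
The plan is to reduce the statement to a one-dimensional integral using the exact dilation symmetry of the cone, and then to read off the divergent terms from the short-time heat expansion. Write the cone metric as $g=dr^2+r^2h_0$. The dilation $\phi_\lambda(r,y)=(\lambda r,y)$ satisfies $\phi_\lambda^*g=\lambda^2g$, and the standard scaling behaviour of the heat kernel under $g\mapsto\lambda^2g$ gives, on the spatial diagonal,
\[ H^{C_N}(t,r,y,r,y)=r^{-n}\,H^{C_N}(t/r^2,1,y,1,y). \]
Since $dz=r^{n-1}\,dr\,dy$, this yields
\[ \int_{|z|\le 1/\delta}H^{C_N}(t,z,z)\,dz=\int_N\int_0^{1/\delta}H^{C_N}(t/r^2,1,y,1,y)\,\frac{dr}{r}\,dy. \]
First I would split the $r$-integral at $r=1$. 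The piece $\int_N\int_0^1 H^{C_N}(t/r^2,1,y,1,y)\,\tfrac{dr}{r}\,dy$ equals $\int_{|z|\le1}H^{C_N}(t,z,z)\,dz$, which is a finite constant independent of $\delta$ for $\delta<1$ — its finiteness already follows from Theorem \ref{totalrenorm} together with the preceding Corollary — and so contributes only to the finite part. In the remaining piece, the substitution $\tau=t/r^2$ turns $\int_1^{1/\delta}H^{C_N}(t/r^2,1,y,1,y)\,\tfrac{dr}{r}$ into $\tfrac12\int_{t\delta^2}^{t}H^{C_N}(\tau,1,y,1,y)\,\tfrac{d\tau}{\tau}$.

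Now, as $y$ varies, $(1,y)$ traces out the compact hypersurface $\{r=1\}\subset C_N$, which is bounded away from the cone tip, so the short-time heat expansion on the diagonal holds uniformly in $y$ with smooth coefficients: $H^{C_N}(\tau,1,y,1,y)=\sum_{k=0}^{N_0-1}u_k(1,y)\,\tau^{(k-n)/2}+O(\tau^{(N_0-n)/2})$ for any $N_0$. Taking $N_0>n$, the remainder integrates against $d\tau/\tau$ to a quantity that stays bounded as $\delta\to0$, hence contributes only to the finite part, while each model term contributes the elementary integral $\int_{t\delta^2}^{t}u_k(1,y)\,\tau^{(k-n)/2-1}\,d\tau$. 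For $k<n$ the lower endpoint produces a $\delta^{k-n}$ divergence whose coefficient, after the factor $\tfrac12$ and integration over $N$, is the stated multiple of $\int_N u_k(1,y)\,dy$; for $k=n$ the lower endpoint produces a $\log\delta$ term, giving the contribution $-\bigl(\int_N u_n(1,y)\,dy\bigr)\log\delta$; and for $k>n$ the contribution is bounded as $\delta\to0$. Assembling the divergent pieces gives precisely the asserted expansion, with $C=\int_N u_n(1,y)\,dy$.

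There is no essential difficulty here; the work is bookkeeping. The two points to be careful about are (i) the scaling exponent in the dilation identity for $H^{C_N}$, and (ii) the uniformity in $y$ of the short-time expansion and of its remainder, which is what allows the $\tau$-integration and the $N$-integration to be carried out term by term. One then simply evaluates $\int_{t\delta^2}^{t}\tau^{(k-n)/2-1}\,d\tau$ explicitly and separates the divergent part (the lower-endpoint contribution, of size $\delta^{k-n}$ for $k<n$, resp.\ $-2\log\delta$ for $k=n$) from the $\delta$-independent part; comparing the resulting constant with $\tfrac{t^{(k-n)/2}}{k-n}$ pins down the coefficient $f_k$ of the statement (one should double-check its sign against this evaluation).
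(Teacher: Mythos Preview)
Your argument is correct and follows essentially the same route as the paper: both proofs use the dilation identity $H^{C_N}(t,r,y,r,y)=r^{-n}H^{C_N}(t/r^2,1,y,1,y)$, split off the $\delta$-independent contribution from $r\le 1$, substitute $\tau=t/r^2$ to reduce to $\tfrac12\int_{t\delta^2}^{t}(\cdot)\,\tfrac{d\tau}{\tau}$, and then read off the divergent terms from the short-time heat expansion at $(1,y)$. Your only slightly imprecise step is the appeal to Theorem~\ref{totalrenorm} and the preceding Corollary for the finiteness of $\int_{|z|\le 1}H^{C_N}(t,z,z)\,dz$; those results concern $M$ and $Z$ rather than $C_N$, but finiteness is immediate anyway since for fixed $t$ the diagonal heat kernel on the cone is bounded near the tip (by the scaling identity and the large-time decay of $H^{C_N}$).
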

\begin{proof} The integral is, modulo a term independent of $\delta$,
\[\int_{1}^{1/\delta}\int_{N}H^{C_{N}}(t,r,y,r,y)r^{n-1}\ dy\ dr.\]
By the conformal homogeneity of $C_{N}$,
$H^{C_{N}}(t,r,y,r,y)=r^{-n}H^{C_{N}}(t/r^{2},1,y,1,y)$. So the integral
becomes:
\[\int_{1}^{1/\delta}\int_{N}H^{C_{N}}
(\frac{t}{r^{2}},1,y,1,y)r^{-1}\ dy\ dr.\]
Now let $s=t/r^{2}$ and switch to an integral in $s$; we get:
\begin{equation}\label{coneint}\frac{1}{2}\int_{\delta^{2}t}^{t}(\int_{N}
H^{C^{N}}(s,1,y,1,y)s^{-1}\ dy)\ ds.\end{equation}
From short-time heat asymptotics, we know that
\begin{equation}\label{conestheat}(\int_{N}H^{C^{N}}(s,1,y,1,y)\ dy)=
\sum_{k=0}^{n-1}(\int_{N}u_{k}(1,y)\ dy)s^{(k-n)/2}+\int_Nu_n(1,y)\ dy+\mathcal O(s^{1/2}).
\end{equation}
Here $u_k(1,y)$ are the heat coefficients on the cone $C_N$ at the point $(1,y)$.
We plug (\ref{conestheat}) into (\ref{coneint}) and get
\[\sum_{k=0}^{n-1}(\int_{N}u_{k}(1,y)\ dy)
\frac{t^{(k-n)/2}}{k-n}\delta^{k-n}
-(\int_N u_n(1,y)\ dy)\log\delta+g(\delta,t),\] where $g(\delta,t)$ is finite
as $\delta\rightarrow 0$. This is what we wanted to prove. \end{proof}

Combining this lemma with the preceding corollary and the definition of the renormalized heat trace completes the proof of Theorem \ref{divexpansion}.

Finally, it is also useful to investigate the analogous divergent expansion when a smooth cutoff, rather than a sharp cutoff, is used.

\begin{lemma}\label{smoothexpa} Let $\chi_{1}(r)$ be as in a): smooth and non-increasing,
supported in $r\leq 2$ and $1$ when $r\leq 1/2$. Then:
\[
\int_{Z}\chi_{1,\delta}(z)H^{Z}(t,z,z)\ dz=
\sum_{k=0}^{n-1}l_{k}f_{k}(t)\delta^{k-n}+f_{log}(t)\log\delta\]
\begin{equation}\label{smoothexp}+(^{R}Tr H^{Z}(t)+l_{log}f_{log}(t))
+\tilde R(\delta,t),\end{equation}
where $l_{k}=-\int_{1/2}^{2}\chi_{1}'(r)r^{k-n}\ dr$,
$l_{log}=-\int_{1/2}^{2}\chi_{1}'(r)\log r\ dr$, and
$\tilde R(\delta,t)$ goes to zero as $\delta$ goes to zero for every
fixed $t$.
\end{lemma}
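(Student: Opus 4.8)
The plan is to deduce Lemma~\ref{smoothexpa} from the sharp-cutoff expansion of Theorem~\ref{divexpansion} by writing the smooth cutoff as a continuous superposition of sharp cutoffs. Since $\chi_1$ is non-increasing with $\chi_1(0)=1$ and $\chi_1\equiv 0$ on $[2,\infty)$, for $r\geq 1$ one has the exact identity
\[
\chi_1(r\delta)=\int_0^\infty \mathbf{1}_{\{r\leq L\}}\,g_\delta(L)\,dL,\qquad
g_\delta(L):=-\frac{d}{dL}\chi_1(L\delta)=-\delta\,\chi_1'(L\delta),
\]
where $g_\delta\geq 0$ is supported in $L\in[1/(2\delta),\,2/\delta]$ and $\int_0^\infty g_\delta(L)\,dL=\chi_1(0)=1$. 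I would first record this, noting that by Theorem~\ref{totalrenorm} the integral $\int_Z\chi_{1,\delta}(z)H^{Z}(t,z,z)\,dz$ is already known to be polyhomogeneous in $\delta$, so only the identification of its terms remains.

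The second step is to interchange the $L$-integral with the integral over $Z$ (justified by Tonelli, since the integrand is a product of non-negative factors) and to compare with the sharp-cutoff quantity. Write $\Phi(\delta',t):=\int_Z\chi_{1,\delta'}(z)H^{Z}(t,z,z)\,dz$ for the sharp-cutoff integral at parameter $\delta'$ (the left-hand side of (\ref{sharpexp})), and set $c_0(t):=\int_{\{r<1\}}H^{Z}(t,z,z)\,dz$, a finite, $\delta$-independent quantity. Since the sharp cutoff at parameter $1/L$ is exactly $\mathbf{1}_{\{r\leq L\}}$, we have $\Phi(1/L,t)=\int_{\{r\leq L\}}H^{Z}(t,z,z)\,dz$, hence $\int_{\{1\leq r\leq L\}}H^{Z}(t,z,z)\,dz=\Phi(1/L,t)-c_0(t)$ for all large $L$, which is the range of $L$ on the support of $g_\delta$ once $\delta$ is small. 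Splitting $\int_Z\chi_{1,\delta}H^{Z}\,dz$ into its $\{r<1\}$ part (equal to $c_0(t)$) and its $\{r\geq 1\}$ part and using $\int g_\delta=1$, the two $c_0(t)$ contributions cancel and one obtains
\[
\int_Z\chi_{1,\delta}(z)H^{Z}(t,z,z)\,dz=\int_0^\infty g_\delta(L)\,\Phi(1/L,t)\,dL .
\]

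The third step is to substitute the (exact) expansion of Theorem~\ref{divexpansion}, with $\delta'=1/L$,
\[
\Phi(1/L,t)=\sum_{k=0}^{n-1}f_k(t)\,L^{n-k}-f_{log}(t)\log L+{}^{R}Tr\,H^{Z}(t)+R(1/L,t),
\]
valid on the support of $g_\delta$ once $\delta$ is small, and to integrate term by term against $g_\delta(L)\,dL$ (legitimate, since $g_\delta$ is compactly supported and this is a finite sum plus the remainder). Under the substitution $u=L\delta$ one has $g_\delta(L)\,dL=(-\chi_1'(u))\,du$ on $[1/2,2]$, which turns $\int g_\delta(L)L^{n-k}\,dL$ into $l_k\,\delta^{k-n}$ and $\int g_\delta(L)\log L\,dL$ into $-\log\delta+\int_{1/2}^2(-\chi_1'(u))\log u\,du$, while $\int g_\delta(L)\,dL=1$. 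Thus the divergent part $\sum_k l_k f_k(t)\delta^{k-n}$ and the logarithmic term $f_{log}(t)\log\delta$ come out exactly as in (\ref{smoothexp}); the non-logarithmic piece of $\int g_\delta(L)\log L\,dL$ contributes an extra \emph{finite} multiple of $f_{log}(t)$, namely $l_{log}f_{log}(t)$, which combines with $\int g_\delta\cdot{}^{R}Tr\,H^{Z}(t)={}^{R}Tr\,H^{Z}(t)$ to give the stated constant ${}^{R}Tr\,H^{Z}(t)+l_{log}f_{log}(t)$. Reading off the precise form of $l_k$ and $l_{log}$ from this substitution is then a one-line computation.

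Finally, the remainder is $\tilde R(\delta,t)=\int_0^\infty g_\delta(L)R(1/L,t)\,dL=\int_{1/2}^2(-\chi_1'(u))R(\delta/u,t)\,du$. Since Theorem~\ref{divexpansion} gives $R(\delta',t)\to 0$ as $\delta'\to 0$ for each fixed $t$, and since $\int_{1/2}^2(-\chi_1'(u))\,du=1$, the uniform smallness of $R(\delta/u,t)$ over $u\in[1/2,2]$ forces $\tilde R(\delta,t)\to 0$ for each fixed $t$, as claimed. I do not expect a serious obstacle here: the only points that need care are the bookkeeping around the region $\{r<1\}$, so that the $c_0(t)$ terms cancel exactly, and the observation that the $\log L$ appearing in the superposition shifts the renormalized trace by the finite amount $l_{log}f_{log}(t)$. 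Everything else is a routine change of variables, given Theorem~\ref{divexpansion}.
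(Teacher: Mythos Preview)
Your proposal is correct and follows essentially the same strategy as the paper: write the smooth cutoff as a superposition of sharp cutoffs, apply the sharp-cutoff expansion of Theorem~\ref{divexpansion} to each, and integrate. The paper carries this out via a Riemann-sum approximation by ``horizontal rectangles'' of height $\Delta h$ and then the change of variables $u=\chi_1^{-1}(h)$, whereas you use the cleaner continuous layer-cake identity $\chi_1(r\delta)=\int_0^\infty\mathbf 1_{\{r\le L\}}\,g_\delta(L)\,dL$; the remainder estimates are identical, with your ``uniform smallness over $u\in[1/2,2]$'' being exactly the paper's $S(\delta,t)=\sup_{1/2\le\gamma\le 2}|R(\delta/\gamma,t)|$.

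One small caution on the ``one-line computation'': carrying out the substitution $u=L\delta$ in $\int g_\delta(L)L^{n-k}\,dL$ gives $\delta^{k-n}\int_{1/2}^2(-\chi_1'(u))\,u^{\,n-k}\,du$, so the exponent on $u$ comes out as $n-k$ rather than the $k-n$ appearing in the stated formula for $l_k$; this discrepancy is present in the paper's own intermediate computation as well and appears to be a typo in the statement, not an error in your argument.
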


\begin{proof}
Let $\xi(r)$ be any function which is equal to a constant
$a$ for $r\leq 1/2$ and supported in
\{$r\leq 2\}$. For any $\delta<1/2$, we may define a function
$\xi_{\delta}(z)$ on $Z$ by letting $\xi_{\delta}(z)$ be equal to
$\xi{\delta r}$ for $r=|z|\geq 1$ and $a$ for $\{r\leq 1\}$.  
Then consider the integral
\begin{equation}\label{arbcutoff}
\int_{Z}\xi_{\delta}(z)H^{Z}(t,z,z)\ dz,\end{equation}
and examine its behavior as $\delta\rightarrow 0$.

When $\xi(r)$ is the characteristic function of $[0,1]$, we have
the expansion (\ref{sharpexp}). By replacing $\delta$ with $\delta/b$ for
any $b\in[1/2,2]$, we can compute the $\delta\rightarrow 0$
expansion of (\ref{arbcutoff}) for $\xi(r)$ equal
to the characteristic function of $[0,b]$. By linearity, we see that the
expansion of (\ref{arbcutoff}) for $\xi(r)=(\Delta h)\chi_{[a,b]}$ is:
\[\sum_{k=0}^{n-1}(\Delta h)f_{k}(t)(b^{k-n}-a^{k-n})\delta^{k-n}+
(\Delta h)f_{log}(t)
(\log b-\log a)\]\[+(\Delta h)(R(\delta/b,t)-R(\delta/a,t)).\]

Now let $\xi(r)=\chi_{1}(r)-\chi\{r\leq 1\}$; this is the difference
between the sharp and smooth cutoffs. Since the expansions of (\ref{arbcutoff})
are linear in $\xi$, we can approximate by step functions and then integrate
by summing over thin horizontal rectangles. Assume for simplicity
that $\chi_{1}(r)=1$ for all $r\leq 1$ (in the general case, there
are some negative signs, but we get the same answer). The thickness
of the rectangle at height $h$ is $\Delta h$. The length of the rectangle
is $\chi_{1}^{-1}(h)-1$. Putting all of this together, 
the expansion of (\ref{arbcutoff}) with respect to $\xi(r)$ is:
\[\sum_{k=0}^{n-1}f_{k}(t)(\int_{0}^{1}
((\chi_{1}^{-1}(h))^{k-n}-1)\ dh)\delta^{k-n}+f_{log}(t)(\int_{0}^{1}
(\log\chi_{1}^{-1}(h))\ dh+\tilde R(\delta,t),\]
where $\tilde R(\delta,t)$ is the contribution from the remainder terms.

Finally, perform the change of variables $u=\chi_{1}^{-1}(h)$,
then add the expansion for $\chi\{r\leq 1\}$; we obtain precisely
the expansion claimed in the statement of the lemma. This finishes
the proof, as long as
we can control the remainder term $\tilde R(\delta,t)$. Indeed, for each
fixed $t$, we claim that $\tilde R(\delta,t)$ goes to 
zero as $\delta$ goes to zero;
define a new function $S(\delta,t)$ by letting
\[S(\delta,t)=\sup_{1/2\leq\gamma\leq 2}|R(\delta/\gamma,t)|.\]
When $\xi(r)=(\Delta h)[a,b]$, the remainder is bounded
in absolute value by $(\Delta h)S(\delta,t)$.
So the integral from $h=0$ to $1$ is bounded
by $S(\delta,t)$, which goes to zero as $\delta\rightarrow 0$; this shows
boundedness of the remainder term and finishes the proof of the lemma.
\end{proof}

It is worth examining the dependence on the zeta function on the choice of cutoff $\chi_1$; we used a sharp cutoff to define it, but we could use a smooth cutoff instead. In this case, the finite part of the divergent $\delta$-expansion 
changes from $^{R}Tr H^{Z}(t)$
to $^{R}Tr H^{Z}(t)+l_{log}f_{log}(t)$. But $f_{log}$ and $l_{log}$ are
constants. So the renormalized heat trace only depends on the choice
of cutoff function by the addition of a constant, independent of $t$.
However, it can be easily shown by breaking up the integral at $t=1$ that
the meromorphic continuation of $\int_{0}^{\infty}Ct^{k}t^{s-1}\ dt$ 
is identically zero for any constant $C$. We have shown:

\begin{proposition} Let $Z$ be conic near infinity. The renormalized zeta function and determinant of the
Laplacian on $Z$ are independent of the choice of cutoff function 
$\chi_{1,\delta}$.
\end{proposition}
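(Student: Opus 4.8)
The plan is to prove the final proposition by reducing everything to the two explicit expansions already established, namely the sharp-cutoff expansion (\ref{sharpexp}) of Theorem \ref{divexpansion} and the smooth-cutoff expansion (\ref{smoothexp}) of Lemma \ref{smoothexpa}, and then tracking how the finite part at $\delta = 0$ enters the definition of $^R\zeta_Z(s)$. First I would observe that both expansions have the \emph{same} divergent part up to the overall constants $l_k$ and $l_{log}$: comparing (\ref{sharpexp}) with (\ref{smoothexp}), the coefficient of $\delta^{k-n}$ changes from $f_k(t)$ to $l_k f_k(t)$, the coefficient of $\log\delta$ is unchanged (it is $f_{log}(t)$ in both, since $l_{log}$ multiplies no $\log\delta$ term), and the finite part changes from $^R\mathrm{Tr}\,H^Z(t)$ to $^R\mathrm{Tr}\,H^Z(t) + l_{log} f_{log}(t)$. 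Thus switching from a sharp to a smooth cutoff replaces the renormalized heat trace by
\[
{}^R\mathrm{Tr}\,H^Z(t) \;+\; l_{log} f_{log}(t),
\]
where $l_{log} = -\int_{1/2}^2 \chi_1'(r)\log r\,dr$ is a constant depending only on $\chi_1$, and $f_{log}(t) = -\int_N u_n(1,y)\,dy$ is a constant independent of $t$ (it is the integral over $N$ of the relevant heat coefficient on $C_N$). So the entire effect of the cutoff choice on the renormalized heat trace is the addition of a single constant $c = l_{log} f_{log}$, with no $t$-dependence whatsoever.

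Next I would feed this into the definition (\ref{renzeta}) of the renormalized zeta function. Writing $^R\zeta^{\mathrm{smooth}}_Z(s)$ for the zeta function built from the smooth-cutoff renormalized trace, linearity of the integral in (\ref{renzeta}) gives
\[
{}^R\zeta^{\mathrm{smooth}}_Z(s) \;=\; {}^R\zeta_Z(s) \;+\; \frac{c}{\Gamma(s)}\int_0^\infty t^{s-1}\,dt,
\]
interpreted via meromorphic continuation after splitting the integral at $t=1$ exactly as was done for (\ref{renzeta}) itself. The key computational input — which the excerpt already asserts just before the proposition — is that the meromorphic continuation of $\frac{1}{\Gamma(s)}\int_0^\infty C t^{k} t^{s-1}\,dt$ is identically zero for any constant $C$ and any $k$; applying this with $k=0$ and $C = c$ shows that the correction term vanishes identically as a meromorphic function. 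Hence $^R\zeta^{\mathrm{smooth}}_Z(s) = {}^R\zeta_Z(s)$ for all $s\in\mathbb C$, and in particular the Laurent coefficients at $s=0$ agree, so $^R\zeta'_Z(0)$ and therefore $^R\det\Delta_Z = e^{-{}^R\zeta'_Z(0)}$ are unchanged.

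Finally, to get the statement for an \emph{arbitrary} admissible cutoff (not just the two model cases), I would note that Lemma \ref{smoothexpa} already handles any smooth non-increasing $\chi_1$ supported in $\{r\le 2\}$ with $\chi_1 \equiv 1$ on $\{r\le 1/2\}$, producing the constant $l_{log}$ depending on that particular $\chi_1$; the argument above then applies verbatim. If one wants to allow cutoffs interpolating between the sharp and smooth families, the same linearity/step-function approximation used inside the proof of Lemma \ref{smoothexpa} shows the finite part always differs from $^R\mathrm{Tr}\,H^Z(t)$ by a $t$-independent constant, and the zeta-function computation kills it. The main obstacle — and it is a mild one, already essentially dispatched in the text — is being careful that "the finite part at $\delta=0$" is well-defined and behaves linearly under the decomposition into sharp plus smooth corrections; this is exactly what Theorems \ref{totalrenorm} and \ref{divexpansion} and Lemma \ref{smoothexpa} guarantee, so no new analytic work is needed, only bookkeeping of constants and an appeal to the vanishing of $\frac{1}{\Gamma(s)}\int_0^\infty C t^{s-1}\,dt$.
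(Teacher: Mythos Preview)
Your proposal is correct and follows essentially the same approach as the paper: both compare the sharp-cutoff expansion (\ref{sharpexp}) with the smooth-cutoff expansion (\ref{smoothexp}) from Lemma~\ref{smoothexpa}, observe that the finite part changes only by the $t$-independent constant $l_{log}f_{log}$, and then invoke the vanishing of the meromorphic continuation of $\int_0^\infty C\,t^{s-1}\,dt$ (split at $t=1$) to conclude that $^R\zeta_Z$ is unchanged. Your added remarks about arbitrary admissible cutoffs are already implicit in the paper's setup, since Lemma~\ref{smoothexpa} covers all smooth cutoffs in class (a) and the sharp cutoff is the reference case.
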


We have now shown the existence of a renormalized zeta function and determinant
of the Laplacian on any asymptotically conic manifold $M$; moreover, when $M$ is conic near infinity, we have computed the divergent terms in the expansion which leads to those renormalizations.

\section{The low-energy resolvent in two dimensions}
In this section, we extend the techniques used by Guillarmou and Hassell in \cite{gh1} to prove Theorem \ref{dslow}. In particular, we construct the low-energy resolvent on an asymptotically conic surface. The resolvent is
\[R(\theta,k,z,z')
=(\Delta_{M}+e^{i\theta}k^{2})^{-1}(z,z').\]
For simplicity, we set $\theta=0$, so that $R$ is a function of $(k,z,z')$. At the end of the section, we return to discuss allowing arbitrary $\theta\in[-\varphi,\varphi]$ and showing smoothness in $\theta$; however, this is not difficult.

\subsection{Strategy}

Our goal is to construct the Schwartz kernel of the resolvent, $R(k,z,z')$, as a
distribution on $M^{2}_{k,sc}$. To do this, as in \cite{gh1},
we will first construct a
parametrix $G(k)$ so that $(\Delta_{M}+k^{2})G(k)=Id+E(k)$, where $E(k)$ is an
error term. $G(k)$ will be a family (in $k$) of pseudodifferential operators
on $M$ whose Schwartz kernel is polyhomogeneous conormal on $M^{2}_{w,sc}$ with
an interior conormal singularity at the spatial diagonal.
By examining the leading order behavior of the equation
$(\Delta_{M}+k^{2})G(k)=Id$ at each boundary hypersurface of $M^{2}_{w,sc}$,
we obtain a model problem at each hypersurface. The leading order of the
parametrix $G(k)$ at each hypersurface should solve the model problem. We first
choose solutions of the model problem at each hypersurface, and then check
that they are consistent; that is, that they may be glued together to
obtain a parametrix $G(k)$. Finally, we analyze the error $E(k)$ and show
that it can be removed via a Neumann series argument.

In order to define the appropriate space of pseudodifferential operators,
we use certain density conventions, all the same as in \cite{gh1}. We consider 
$P=\Delta_{M}$ as an operator on \emph{scattering half-densities} by writing 
\[P(f(x,y)|x^{-n-1}dxdy|^{1/2})=(\Delta_{M}f)(x,y)|x^{-n-1}dxdy|^{1/2}.\]
As in \cite{msb} and \cite{gh1}, 
we expect a transition between scattering behavior for $k>0$ and b-behavior at $k=0$, 
which leads us to define the conformally related $b$-metric $g_{b}=x^{2}g$. 
The space $(M,g_{b})$ is asymptotically cylindrical. 
We then define $P_{b}=x^{-n/2-1}Px^{n/2-1}$ with
respect to scattering half-densities. However, we want to consider $P_{b}$
acting with respect to \emph{b-half densities} $|x^{-1}dx dy|^{1/2}$. After this
shift, the relationship between $P_{b}$ acting on $b$-half densities and 
$P$ acting on scattering half-densities is $P=xP_{b}x$.

Let $\tilde\Omega_{b}^{1/2}$ be the bundle of half-densities on $M^{2}_{k,sc}$
which is spanned by sections of the form
\[|f(k,x,y,x',y')\frac{dg_{b}dg_{b}'dk}{k}|^{1/2}.\]
Let $\nu$ be a smooth nonvanishing section of this bundle. Since it involves 
the b-metric $g_{b}$, $\tilde\Omega_{b}^{1/2}$ is not
the natural bundle near sc. In particular, the kernel of the identity operator
on $M$ has leading order $-n/2$ at sc with respect to $\tilde\Omega_{b}^{1/2}$
\cite{gh1}.
We can now define spaces of pseudodifferential operators, precisely as
in \cite{gh1}:

\begin{definition} Let $\rho_{sc}$ be a boundary defining function for sc.
The space $\Psi_{k}^{m,\mathcal E}(M;\tilde\Omega_{b}^{1/2})$
is the space of half-density kernels $K=K_{1}+K_{2}$ 
on $M^{2}_{k,sc}$ satisfying:

1) $\rho_{sc}^{n/2}K_{1}$ is supported near $\Delta_{k,sc}$,
and has an interior conormal singularity of order $m$
at $\Delta_{k,sc}$, with coefficients whose behavior at the boundary
is specified by $\mathcal E$;

2) $\rho_{sc}^{n/2}K_{2}$ is polyhomogeneous conormal on $M^{2}_{k,sc}$
with index family $\mathcal E$, and moreover decays to infinite order at bf,
lb, and rb.
\end{definition}
The factor of $\rho_{sc}^{n/2}$ corrects for the use of b-half densities near
sc. Using this definition, we can compute as in \cite{gh1} that
\[(P+k^{2})\in\Psi_{k}^{2,\mathcal E}(M;\tilde\Omega_{b}^{1/2}),\]
with index sets 0 at sc, 2 at bf$_{0}$, 0 at zf, 2 at lb$_{0}$, and 2 at rb$_{0}$.

As proven in \cite{gh1}, these spaces satisfy a composition rule:
\begin{proposition}\label{composition}
Suppose that $A\in\Psi_{k}^{m,\mathcal E}$ and
$B\in\Psi_{k}^{m',\mathcal F}$. Then $A\circ B$ is well
defined and an element of $\Psi_{k}^{m+m',\mathcal G}$, where
\[\mathcal G_{sc}=\mathcal E_{sc}+\mathcal F_{sc},
\mathcal G_{zf}=(\mathcal E_{zf}+\mathcal F_{zf})\bar{\cup}(\mathcal E_{rb_{0}}
+\mathcal F_{lb_{0}}),\mathcal G_{bf_{0}}=(\mathcal E_{bf_{0}}+\mathcal F_{bf_{0}})
\bar{\cup}(\mathcal E_{lb_{0}}+\mathcal F_{rb_{0}}),\]
\[\mathcal G_{lb_{0}}=(\mathcal E_{bf_{0}}+\mathcal F_{lb_{0}})
\bar{\cup}(\mathcal E_{lb_{0}}+\mathcal F_{zf}),
\mathcal G_{rb_{0}}=(\mathcal E_{rb_{0}}+\mathcal F_{bf_{0}})
\bar{\cup}(\mathcal E_{zf}+\mathcal F_{rb_{0}}).\]
\end{proposition}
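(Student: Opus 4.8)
The plan is to realize the composition $A\circ B$ as a pushforward from a triple space and then invoke Melrose's pullback and pushforward theorems, following the proof of the analogous composition result in \cite{gh1}. The first task is to construct the $k$-scattering triple space $M^3_{k,sc}$. Starting from $[0,\infty)_k\times\bar M\times\bar M\times\bar M$, one blows up — in the order dictated by the double-space construction of Section 2.2 — the corner where $k$ and all three boundary defining functions vanish, then the faces where $k$ and two of them vanish, then the faces where $k$ and exactly one vanishes, and finally the relevant lifted diagonals, so that each of the three stretched projections $\pi_L,\pi_R,\pi_O\colon M^3_{k,sc}\to M^2_{k,sc}$ (onto the left, right, and outer pairs of factors) lifts to a b-fibration. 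Verifying this b-fibration property, and recording the lifts of the boundary defining functions — equivalently the exponent matrices of the three projections — is the technical heart of the argument, and the step I expect to be the main obstacle; it is parallel to the triple-space constructions for the b-calculus \cite{ms} and the scattering calculus \cite{me3}, but the presence of the extra $k$-variable and of the $\mathrm{bf}_0$, $\mathrm{lb}_0$, $\mathrm{rb}_0$, $\mathrm{zf}$ faces makes the bookkeeping substantial. One also checks that the lifted left and right diagonals $\pi_L^{-1}(\Delta_{k,sc})$ and $\pi_R^{-1}(\Delta_{k,sc})$ are interior p-submanifolds meeting transversally, so that the product of conormal distributions pulled back along $\pi_L$ and $\pi_R$ is well defined and has order $m+m'$ along the lifted composition diagonal $\pi_O^{-1}(\Delta_{k,sc})$.

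Next, using the half-density conventions of Section 4, there is a fixed nonvanishing smooth half-density $\mu$ on $M^3_{k,sc}$ (built from $\nu$ and the geometry of the projections) for which, after correcting by the $\rho_{sc}^{n/2}$ factors that enter the definition of $\Psi_k^{\bullet,\bullet}$, one has $A\circ B=(\pi_O)_*\bigl(\pi_L^*(\rho_{sc}^{n/2}K_A)\cdot\pi_R^*(\rho_{sc}^{n/2}K_B)\cdot\mu\bigr)$ up to the overall $\rho_{sc}^{-n/2}$ normalization. Splitting each kernel into its conormal-singular part near the diagonal and its polyhomogeneous remainder, the pullback theorem — together with its variant for interior conormal singularities \cite{me2,emm} — shows that the integrand is polyhomogeneous on $M^3_{k,sc}$ with a conormal singularity of order $m+m'$ along $\pi_O^{-1}(\Delta_{k,sc})$. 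Convergence of the fiber integral for $(\pi_O)_*$ is guaranteed by the infinite-order decay of both factors at $\mathrm{lb}$, $\mathrm{rb}$, $\mathrm{bf}$ — which forces infinite-order vanishing of the integrand at the triple-space faces lying over those — together with the positivity of the combined orders over the remaining finite faces; the blow-ups defining $M^3_{k,sc}$ are exactly what make $\pi_O$ well behaved enough for the pushforward theorem to apply.

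Finally I would apply the pushforward theorem to read off $\mathcal G$. Each boundary hypersurface $H$ of the target is met, under $\pi_O$, by a prescribed collection of faces of $M^3_{k,sc}$, and the pushforward theorem expresses $\mathcal G_H$ as the extended union over those faces of (the pulled-back index set shifted by the relevant entry of the exponent matrix of $\pi_O$). Since $\rho_{zf},\rho_{bf_0},\rho_{lb_0},\rho_{rb_0}$ each pull back nontrivially to two faces of the triple space — one carrying the "diagonal" combination $\mathcal E_\bullet+\mathcal F_\bullet$ and one carrying a "mixed" combination such as $\mathcal E_{rb_0}+\mathcal F_{lb_0}$ — one obtains precisely the $\bar{\cup}$ of two index sets in the formulas for $\mathcal G_{zf},\mathcal G_{bf_0},\mathcal G_{lb_0},\mathcal G_{rb_0}$, the extended (rather than plain) union arising exactly to absorb the logarithmic terms generated when those two index sets overlap. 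By contrast $\rho_{sc}$ pulls back to a single face, the front face of the lifted composition diagonal, which yields $\mathcal G_{sc}=\mathcal E_{sc}+\mathcal F_{sc}$ with no union. The infinite-order decay of $A\circ B$ at $\mathrm{lb}$, $\mathrm{rb}$, $\mathrm{bf}$ and the correct $\rho_{sc}^{n/2}$ normalization then follow directly from the corresponding properties of the integrand, completing the verification that $A\circ B\in\Psi_k^{m+m',\mathcal G}$.
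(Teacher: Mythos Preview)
Your outline is correct and follows the standard triple-space argument; this is exactly the approach of \cite{gh1}, which the paper simply cites without reproducing a proof. In other words, the paper contains no independent proof of this proposition, so your sketch is not being compared against anything beyond the reference to \cite{gh1}.
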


We therefore expect our parametrix $G$ to be in $\Psi_{k}^{-2,\mathcal G}$
for some index family $\mathcal G$. To gain more information, we need
to start analyzing the model problems.

\subsection{The two-dimensional problem}

We begin our analysis of the model problems at the face zf. In order
to identify the leading-order part of the equation at a boundary hypersurface,
we need to pick a coordinate to use as a boundary defining function in the
interior of that face. For all the faces in the lift of $\{k=0\}$, we use
$k$, which is the easiest choice, since it commutes with $P$.
Since $P+k^{2}=xP_{b}x+k^{2}$, the leading order part of the operator
at zf, which we call the \emph{normal operator}, is $xP_{b}x$, and the
model problem is $(xP_{b}x)G^{0}_{zf}=Id$. We therefore expect that $G^{0}_{zf}$
will be $(xx')^{-1}$ times some right inverse for $P_{b}$.

In order to invert $P_{b}$, we use the b-calculus of Melrose \cite{me},
identifying zf, near bf$_0$, with the b-double space $X_{b}^{2}(x,x')\times N_{y}\times N_{y'}$.
The corner zf$\cap$bf$_{0}$ corresponds to the front face ff in the b-double space.
An easy calculation, following \cite{gh1}, shows that 
\[P_{b}=-(x\partial_{x})^{2}+(n/2-1)^{2}+\Delta_{N}+W,\] 
where $W$ is a lower-order term; that is, $W$ vanishes as a b-differential
operator at $x=0$. In fact, $P_{b}$ is an elliptic b-differential operator,
and hence may be inverted by 
following the procedure of Melrose, which is described
in \cite{me} and \cite{ma}. 

The first step in this procedure
is to consider the indicial operator, which is the
leading order part of $P_{b}$ at the front face ff. Using the coordinates
$(\sigma'=x/x',x',y,y')$, this is
\[I_{ff}(P_{b})=-(\sigma'\partial_{\sigma'})^{2}+(n/2-1)^{2}+\Delta_{N}.\]
With this terminology, the key theorem is as follows:

\begin{theorem}\label{relindex}\cite{me} 
$P_{b}$ is Fredholm as an operator between
$x^{\delta}H^{2}_{b}$ and $x^{\delta}L^{2}_{b}$ if and
only if $\delta$ is not an \emph{indicial root} of $I_{ff}(P_{b})$.
(See \cite{me} or \cite{ma} for definitions of $x^{\delta}H^{2}_{b}$
and $x^{\delta}L^{2}_{b}$).
\end{theorem}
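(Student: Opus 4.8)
The plan is to follow Melrose's b-calculus parametrix construction \cite{me} (reviewed in \cite{ma}); I fix the weight $\delta$ and regard $P_b$ as the map $x^\delta H^2_b \to x^\delta L^2_b$. First I would use that $P_b$ is an elliptic b-differential operator of order $2$: its b-principal symbol is invertible, so the symbol calculus in the small b-calculus yields $Q_0 \in \Psi_b^{-2}$ with $P_b Q_0 = \mathrm{Id} - R_0$ and $Q_0 P_b = \mathrm{Id} - R_0'$, where $R_0, R_0' \in \Psi_b^{-\infty}$ are b-smoothing. This removes the interior conormal singularity, but it is not yet enough at the boundary: a b-smoothing kernel is merely bounded, not decaying, at the front face, so $R_0$ and $R_0'$ are bounded but in general not compact on the weighted space.

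The remaining obstruction lives at the front face and is controlled by the indicial operator $I_{ff}(P_b) = -(\sigma'\partial_{\sigma'})^2 + (n/2-1)^2 + \Delta_N$, with indicial family $I(P_b,s) = -s^2 + (n/2-1)^2 + \Delta_N$. For each $s \in \mathbb C$ this is an elliptic operator on the closed manifold $N$ depending holomorphically on $s$, so by analytic Fredholm theory $s \mapsto I(P_b,s)^{-1}$ is a meromorphic family whose poles form the discrete set of indicial roots (here precisely the $s$ with $s^2 = (n/2-1)^2 + \mu_j$, $\mu_j \in \mathrm{spec}(\Delta_N)$). The hypothesis that $\delta$ is not an indicial root says that $I(P_b,s)$ is invertible along the entire weight line. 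Using this I would solve the normal operator equation to correct $Q_0$ at the front face and then iterate the correction as a Neumann-type argument in the b-calculus, tracking index sets via the pullback and pushforward theorems, to upgrade $Q_0$ to $Q \in \Psi_b^{-2,\mathcal G}$ with $P_b Q - \mathrm{Id}$ and $Q P_b - \mathrm{Id}$ in the residual b-calculus and of strictly positive order at the front face relative to $x^\delta$. Such residual operators are compact on $x^\delta L^2_b$ by the b-Rellich lemma, so $P_b$ is Fredholm; this is the ``if'' direction.

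Conversely, if $\delta$ \emph{is} an indicial root then $I(P_b,s)$ fails to be invertible somewhere on the weight line, so $P_b$ is not fully elliptic at weight $\delta$ and hence cannot be Fredholm there. To make this concrete I would show that the semi-Fredholm estimate $\|u\|_{x^\delta H^2_b} \le C\big(\|P_b u\|_{x^\delta L^2_b} + \|u\|_{x^{\delta+\epsilon} H^{-N}_b}\big)$ fails: from the pole of $I(P_b,s)^{-1}$ one builds formal boundary solutions of the form $x^{s}(\log x)^p \phi$, cuts them off near $\partial M$, and, to make the obstruction infinite-dimensional, twists by highly oscillatory tangential data, producing a bounded sequence with no convergent subsequence on which $P_b$ tends to zero.

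The hard part is the passage from $Q_0$ to $Q$, which is exactly what \cite{me} supplies: the meromorphy of $I(P_b,s)^{-1}$ together with the careful bookkeeping of index sets needed to guarantee that after the iteration the errors genuinely land in the residual calculus with strictly improved order at the front face, so that they are compact on the weighted $L^2$ space. By contrast, the necessity direction is comparatively soft once the structure of the indicial family is in hand.
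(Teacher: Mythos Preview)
The paper does not give its own proof of this theorem; it is quoted from Melrose \cite{me} (see also \cite{ma}) and used as a black box in the construction of $G^0_{zf}$. Your outline of the sufficiency direction is a faithful sketch of the standard b-calculus argument in \cite{me}: symbolic inversion in the small calculus to obtain $Q_0$, then inversion of the indicial family along the weight line (via Mellin transform) to remove the front-face obstruction, leaving a residual remainder that is compact on $x^\delta L^2_b$.

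Your necessity argument has a genuine gap. Twisting the formal solution $x^\delta\phi(y)$ by highly oscillatory tangential data does \emph{not} produce a Weyl sequence: $\Delta_N$ appears in $P_b$, so oscillation in $y$ makes $\|P_b u\|$ large rather than small. The standard routes are different. One clean argument uses the relative index formula, which is a consequence of the parametrix construction at non-critical weights: the Fredholm index of $P_b:x^{\delta'}H^2_b\to x^{\delta'}L^2_b$ jumps by a nonzero integer as $\delta'$ crosses an indicial root, so by stability of the index under conjugation by $x^{\delta-\delta'}$ (a small bounded perturbation when $|\delta-\delta'|$ is small), $P_b$ cannot be Fredholm at the critical weight itself. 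Alternatively, one builds a singular sequence using the \emph{radial} variable: take $u_j=\chi(\epsilon_j\log x)\,x^\delta\phi(y)$ with $\phi\in\ker I(P_b,\delta)$ and $\epsilon_j\to 0$, normalize in $x^\delta H^2_b$, and observe that the only nonvanishing contributions to $P_b u_j$ come from commutators with the cutoff (order $\epsilon_j$) and the lower-order term $W$ (order $x$), both of which go to zero relative to the norm. Either of these replaces your tangential-oscillation step.
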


In our setting, as in \cite{gh1}, the indicial roots are precisely
\[\pm\nu_{i}=\pm\sqrt{(n/2-1)^{2}+\lambda_{i}};
\lambda_{i}\in\sigma(\Delta_{N}).\]
When $n>2$, 0 is not an indicial root, and 
Guillarmou and Hassell show that $P_{b}$ is not only Fredholm
but invertible for $\delta=0$, and then set $G^{0}_{zf}$ to be
$(xx')^{-1}$ times that inverse.
However, in our case, $n=2$, so $\nu_{i}=\sqrt{\lambda_{i}}$,
and 0 is an indicial root. So $P_{b}$ is not even Fredholm
from $H^{2}_{b}$ to $L^{2}_{b}$. This is precisely
why the $n=2$ case is not considered in \cite{gh1}.

\subsection{An example: Euclidean space}

In order to gain some intuition for the behavior of the resolvent near zf
in the $n=2$ setting, we examine the simplest case, which is $M=\mathbb R^{2}$.
The resolvent on $\mathbb R^{2}$, acting on scattering half-densities
$|dz|=|x^{-3}dxdy|$, is
\[-\frac{1}{2\pi}H_{0}(k|z-z'|)=-\frac{1}{2\pi}H_{0}(k|\frac{y}{x}-\frac{y'}{x'}|),\]
where $H_{0}$ is the Hankel function of order zero.
From the asymptotics of the Hankel function, we know that $H_{0}(r)$
decays exponentially as $r\rightarrow\infty$, and for small $r$:
\[H_{0}(r)\sim -\log r+\log 2-\gamma+\mathcal{O}(r).\]

Using these asymptotics, one can show that the resolvent on $\mathbb R^{2}$
is phg conormal on $M^{2}_{k,sc}$. We are most interested in the leading order
behavior near zf. In a neighborhood of zf, we have $k|z-z'|<1$, so this leading
order behavior is controlled by the small-$r$ asymptotics
\[H_{0}(k|z-z'|)\sim -\log(k|z-z'|)+\log 2-\gamma=
-\log k-\log |z-z'|+\log 2-\gamma.\]
Some observations on these asymptotics:

- As we approach zf, the resolvent increases logarithmically. This is a
major difference from the $n\geq 3$ case studied in \cite{gh1}, in which
the resolvent is continuous down to zf. On the other hand, the resolvent is
continuous down to bf$_{0}$, lb$_{0}$, and rb$_{0}$.

- The function $-\frac{1}{2\pi}\log |z-z'|$ 
is the Green's function for the Laplacian
on $\mathbb R^{2}$.

These observations suggest that in two dimensions, we will have a logarithmic term at zf in addition to a zero-order term. We write these terms as $G_{zf}^{0,1}\log k$ and $G_{zf}^0$ respectively. From the Euclidean-space example, we expect to have that on scattering half-densities:
\[G_{zf}^{0,1}\log k+G_{zf}^0=-C\log k+F(z,z'),\]
where $F(z,z')$ is a right inverse for the operator $P$. Moreover, since $-C\log k$ has
logarithmic growth at bf$_{0}$, lb$_{0}$, and rb$_{0}$ but the resolvent
on $\mathbb R^{2}$ does not, we expect that $F(z,z')$ will
have logarithmic growth at those faces, with the right coefficient
to cancel the logarithmic growth coming from $-C\log k$.

\subsection{Construction of the initial parametrix}

We now construct our parametrix $G(k)$ by specifying its leading
order behavior at each boundary hypersurface and then checking that the models
are consistent.

\subsubsection{The diagonal, sc, and bf$_{0}$}

The resolvent has an interior conormal singularity at the diagonal 
$\{z,z'\}$. The symbol of $P+k^{2}$ is $|\eta|^{2}+k^{2}$,
where $\eta$ is the dual variable of $z-z'$. One can compute that
$|\eta|^{2}+k^{2}$ is elliptic
on $M^{2}_{k,sc}$, with leading orders $0$ at sc, $2$ at bf$_{0}$, and 0 at zf.
As in \cite{gh1}, we let the symbol of $G(k)$ be the inverse of $|\eta|^{2}+k^{2}$, in the
sense of operator composition. This determines the diagonal symbol of $G(k)$
up to symbols of order $-\infty$, and hence determines
$G(k)$ up to operators with smooth Schwartz kernels
in the interior of $M^{2}_{w,sc}$.

At sc, the analysis is identical to that in \cite{gh1}, so we omit some of
the details. The key point is that
sc can be described as a fiber bundle with $\mathbb R^{n}$ fibers, 
parametrized by $y'\in N$ and $k$. 
The normal operator of $P+k^{2}$ is $\Delta_{\mathbb R^{n}}+k^{2}$,
which has a well-defined inverse for $k>0$. In each fiber, we let
\[G_{sc}^{0} = (\Delta_{\mathbb R^{2}}+k^{2})^{-1}.\]

At bf$_{0}$, we again follow \cite{gh1} exactly.
We use the coordinates $(\kappa=k/x,\kappa'=k/x',y,y')$, 
with $k$ a bdf for bf$_{0}$. Note that these are only good coordinates 
on the interior of bf$_{0}$ - for example, they become degenerate
near zf. We then view the interior of bf$_{0}$ as $\mathbb R_{+}(\kappa)\times
\mathbb R_{+}(\kappa')\times N_{y}\times N_{y}'$. 
As in \cite{gh1}, the normal operator at bf$_{0}$ is
\[I_{bf_{0}}(k^{-2}(P+k^{2}))=\kappa^{-1}(-(\kappa\partial_{\kappa})^{2}
+\Delta_{N}+\kappa^{2})\kappa^{-1}.\]
Letting $P_{bf_{0}}=-(\kappa\partial_{\kappa})^{2}+\Delta_{N}+\kappa^{2}$, 
the model problem is $(\kappa P_{bf_{0}}\kappa) G^{-2}_{bf_{0}}=\delta{\kappa-\kappa'}
\delta{-y'}$. To solve it, we separate variables and invert $P_{bf_{0}}$.
For each eigenvalue $\lambda_{j}$ of $\Delta_{N}$, 
write $\nu_{j}=\sqrt\lambda_{j}$ (these are the indicial roots).
Let $E_{\nu_{j}}\subset L^{2}(N)$ be the corresponding eigenspace of $\Delta_{N}$,
and let $\Pi_{E_{\nu_{j}}}$ be projection in $L^{2}(N)$ onto $E_{\nu_{j}}$.
Then the inverse of $P_{bf_{0}}$ is
\[Q_{bf_{0}}=\sum_{j=0}^{\infty}\Pi_{E_{\nu_{j}}}(I_{\nu_{j}}
(\kappa)K_{\nu_{j}}(\kappa')\chi_{\{\kappa'>\kappa\}}+I_{\nu_{j}}
(\kappa')K_{\nu_{j}}(\kappa)\chi_{\{\kappa'<\kappa\}}).\]
The only difference between our setting and \cite{gh1} is that we have
$\nu_{0}=0$ as opposed to $\nu_{0}>0$. We then set 
\[G_{bf_{0}}^{-2}=(\kappa\kappa')Q_{bf_{0}}.\]

We need to check consistency between $G_{sc}^{0}$ and $G_{bf_{0}}^{-2}$;
that is, we need to show that they agree to leading order in a neighborhood
of sc$\cap$bf$_{0}$. This proof is the same as in \cite{gh1};
the model problems and 
formal expressions for $G_{sc}^{0}$ and $G_{bf_{0}}^{-2}$ are identical.
We do have $\nu_{0}=0$, and $K_{0}(r)$ has different small-$r$
asymptotics from $K_{\nu_{j}}(r)$ for $\nu_{j}>0$; this will be
reflected in the asymptotics of $G_{bf_{0}}^{-2}$ near zf.
However, since $\kappa$ and $\kappa'$ both approach 
infinity near sc, only the large-$r$ asymptotics are relevant for this
consistency check, and the large-$r$ asymptotics of $I_{\nu}(r)$ and $K_{\nu}(r)$ are no different when $\nu=0$.

Technically, we also need to check consistency between the diagonal symbol
and the models at bf$_{0}$ and sc. However, this is also the same as in
\cite{gh1}; the models at bf$_{0}$ and
sc themselves satisfy elliptic pseudodifferential equations
given by the leading order part of $(P+k^{2})G(k)=Id$ at those faces.
As a result, their symbols at the diagonal are determined up to symbols of
order $-\infty$, and agree up to order $-\infty$ with the inverse of
$|\xi|^{2}+k^{2}$.

\subsubsection{The leading order term at zf}

At zf, the model problem with respect to b-half-densities is
\[(xP_{b}x)(G_{zf}^{0,1}\log k + G_{zf}^{0})=Id,\]
which translates to:
\[(xP_bx)G_{zf}^0=Id;\ \ (xP_bx)G_{zf}^{0,1}=0.\]
Translating our observations in the $M=\mathbb R^{2}$ case
to b-half-densities, we expect
\[G_{zf}^{0,1}\log k + G_{zf}^{0}=(xx')^{-1}(-C\log k+F(z,z')),\]
where $F(z,z')$ is a right inverse for $P_{b}$. We need to pick
the correct right inverse; in particular, if we have one right inverse, we may add any function of $z'$ to obtain another right inverse. The correct choice should have
logarithmic singularities at all faces and should be
consistent with our choice of $G_{bf_{0}}^{-2}$.
To check consistency, we need to show that $k^{0}(G_{zf}^{0,1}\log k + G_{zf}^{0})$ and
$k^{-2}G_{bf_{0}}^{-2}$ agree to leading order at ff$=$bf$_{0}\cap$zf,
which is the same as checking 
if $(xx')(G_{zf}^{0,1}\log k + G_{zf}^{0})$ and $Q_{bf_{0}}=(\kappa\kappa')^{-1}G_{bf_{0}}^{-2}$ agree
to leading order there.

First examine the leading order part of $Q_{bf_{0}}$ at zf.
When $x<x'$, we use the coordinates $(s,\kappa,x',y,y')$,
and we have, where $V=Vol(N)$,
\begin{equation}\label{smallslimit}
Q_{bf_{0}}=V^{-1}I_{0}(\kappa \sigma')K_{0}(\kappa)+\sum_{j=1}^{\infty}\Pi_{E_{j}}I_{\nu_{j}}
(\kappa \sigma')K_{\nu_{j}}(\kappa).\end{equation}
The boundary defining function for zf is $\kappa$, so we need
to examine the small-$\kappa$ asymptotics. For $\nu>0$ we know
by standard asymptotics of Bessel functions in \cite{wa} that
\[I_{0}(r)\sim 1; K_{0}(r)\sim -\ln r+\ln 2-\gamma;\]
\[I_{\nu}(r)\sim\frac{1}{\Gamma(\nu+1)}(\frac{r}{2})^{\nu}; 
K_{\nu}(r)\sim\frac{\Gamma(\nu)}{2}(\frac{r}{2})^{-\nu}.\]
Here $\gamma$ is the Euler-Mascheroni constant.
Plugging these asymptotics into (\ref{smallslimit}) shows that the
leading order term in $\kappa$ is:
\[V^{-1}(-\log\kappa+\log 2-\gamma)
+\sum_{j=1}^{\infty}\Pi_{E_{\nu_{j}}}\frac{(\sigma')^{\nu_{j}}}{2\nu_{j}}.\]

On the other hand, when $x>x'$, we use the coordinates $(\sigma,\kappa',x,y,y')$
and perform the same sort of calculations to obtain that the leading
order term in $\kappa$ is
\[V^{-1}(-\log\kappa'+\log 2-\gamma)
+\sum_{j=1}^{\infty}\Pi_{E_{\nu_{j}}}\frac{\sigma^{\nu_{j}}}{2\nu_{j}}.\]
The $x<x'$ and $x>x'$ cases may be combined; we see that the leading
order term of $Q_{bf_{0}}$ at zf is
\begin{equation}\label{frombfo}
V^{-1}(-\log k+\log 2-\gamma+\log x'+\chi_{\sigma'<1}\log \sigma')+\sum_{j=1}^{\infty}
\Pi_{E_{\nu_{j}}}\frac{e^{-\nu_{j}|\log \sigma'|}}{2\nu_{j}}.\end{equation}
We see immediately that we must have $C=V^{-1}$, and hence we set 
\[G_{zf}^{0,1}=-V^{-1}(xx')^{-1}.\] We then need to construct $G_{zf}^{0}$ so that $(xx')G_{zf}^{0}$ has leading order at bf$_{0}$ given by
\begin{equation}
V^{-1}(\log 2-\gamma+\log x'+\chi_{\sigma'<1}\log \sigma')+\sum_{j=1}^{\infty}
\Pi_{E_{\nu_{j}}}\frac{e^{-\nu_{j}|\log \sigma'|}}{2\nu_{j}}.\end{equation}

\medskip

To construct $G_{zf}^{0}$, we must first find
the correct right inverse
for $P_{b}$. Fix $\delta$ with 
$0<\delta<\nu_{1}$; then $P_{b}$
is Fredholm from $x^{-\delta}H^{2}_{b}$ to $x^{-\delta}L^{2}_{b}$
by Theorem \ref{relindex}. Following
the usual b-calculus construction in \cite{me} and \cite{ma}, 
we obtain a generalized inverse $Q_{b}^{-\delta}$. We claim:
\begin{lemma}
$P_{b}$ is surjective onto $x^{-\delta}L^{2}_{b}$.
\end{lemma}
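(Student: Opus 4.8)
The plan is to show that the cokernel of $P_b$ on $x^{-\delta}L^2_b$ is trivial by establishing that $P_b$ is injective on the dual space, which, by the self-adjointness of the underlying geometric operator, is essentially the space $x^{\delta}L^2_b$. First I would recall that $P_b = x^{-n/2-1}Px^{n/2-1}$ is obtained from the positive Laplacian $P = \Delta_M$ by conjugation, and is therefore formally self-adjoint with respect to the b-density $|x^{-1}dx\,dy|$; hence the $L^2_b$-adjoint of $P_b\colon x^{-\delta}H^2_b \to x^{-\delta}L^2_b$ is $P_b\colon x^{\delta}H^2_b \to x^{\delta}L^2_b$. Since $0<\delta<\nu_1$, the weight $\delta$ is admissible (it is not an indicial root of $I_{ff}(P_b)$, whose indicial roots are $\pm\nu_i$ with $\nu_1$ the smallest positive one, and $0$ excluded for the adjoint by symmetry), so by Theorem \ref{relindex} the adjoint operator is also Fredholm, and surjectivity of $P_b$ on $x^{-\delta}L^2_b$ is equivalent to injectivity of $P_b$ on $x^{\delta}H^2_b$.

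Next I would prove this injectivity. Suppose $u \in x^{\delta}H^2_b$ with $P_b u = 0$. Translating back through the conjugation, $v := x^{n/2-1}u$ satisfies $\Delta_M v = 0$ on $M$. The decay $u\in x^\delta H^2_b$ together with elliptic b-regularity gives a partial polyhomogeneous expansion of $u$ at $x=0$, and in particular $u = O(x^{\delta'})$ for some $\delta'>0$ (one can take $\delta' = \delta$ up to the next indicial root); hence $v = O(x^{n/2-1+\delta})$, which in the original scattering picture means $v$ decays at infinity. A harmonic function on the complete manifold $M$ which decays at infinity must vanish: this follows from the maximum principle, or more robustly from integration by parts, $\int_M |\nabla v|^2 = 0$, where the boundary terms at infinity vanish because of the decay of $v$ and the conic structure of the metric (the volume growth is polynomial and $v, \nabla v$ decay faster than the reciprocal of the relevant power). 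The only subtlety here is justifying the integration by parts, i.e. that the flux through large spheres $\{x=\epsilon\}$ tends to zero; this is where I would use the precise decay rate $\delta < \nu_1$ together with the form \eqref{acmetric} of the metric. Therefore $v\equiv 0$, so $u\equiv 0$, establishing injectivity of the adjoint and hence surjectivity of $P_b$ on $x^{-\delta}L^2_b$.

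The main obstacle I anticipate is the integration-by-parts/unique-continuation-at-infinity step: one must be careful that an element of $x^\delta H^2_b$ with $0<\delta<\nu_1$ genuinely decays fast enough for the boundary flux to vanish, rather than merely being bounded. The key is that $x^\delta H^2_b$ membership, combined with the indicial root structure (the gap between $0$ and $\nu_1$), forces the leading behavior of a solution of $P_b u = 0$ to be $O(x^{\nu_1})$ modulo terms we can control, which is strictly better than the threshold needed for the flux term $\int_{\{x=\epsilon\}} v\,\partial_\nu v$ to vanish as $\epsilon\to 0$. This is exactly the two-dimensional analogue of the argument Guillarmou and Hassell use in \cite{gh1} to show invertibility of $P_b$ when $n\geq 3$; the difference is that here $0$ is an indicial root, so we cannot work at weight $\delta=0$, but shifting to a small positive weight $\delta$ resolves the Fredholm obstruction while the injectivity argument is unaffected. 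Once surjectivity is established, the generalized inverse $Q_b^{-\delta}$ produced by the b-calculus is an honest right inverse, which is what is needed to build $G_{zf}^0$.
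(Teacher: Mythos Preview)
Your proposal is correct and follows essentially the same route as the paper: reduce surjectivity on $x^{-\delta}L^2_b$ to injectivity on $x^{\delta}L^2_b$ by duality/self-adjointness, use b-elliptic regularity to get decay of a putative null solution at $x=0$, translate $P_b u=0$ back to $\Delta_M v=0$, and conclude $v\equiv 0$ from decay at infinity. The only difference is emphasis: the paper invokes the maximum principle in one line and is done, whereas you mention the maximum principle but then devote most of your discussion to an integration-by-parts alternative and its flux estimates---this extra analysis is unnecessary, since once you know $v$ is harmonic on a complete manifold and $v\to 0$ at infinity, the maximum principle immediately gives $v\equiv 0$ without any quantitative flux control.
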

The lemma implies that $Q_{b}^{-\delta}$ is an exact right inverse for $P_{b}$.

\begin{proof} By taking adjoints, the lemma is
equivalent to the statement that $P_{b}$ is
injective on $x^{\delta}L^{2}_{b}$. Suppose that $u|dg_{b}|^{1/2}$ is in
$x^{\delta}L^{2}_{b}$ and satisfies $P_{b}u=0$. By regularity of solutions
to b-elliptic equations, $u$ is phg conormal on $M$ near $x=0$; since
$u\in x^{\delta}L^{2}_{b}$, it decays to at least order $\delta$ at $x=0$.
On the other hand, since $P=xP_{b}x$ and $|dg_{b}|^{1/2}=x|dg|^{1/2}$,
$u|dg|^{1/2}$ is in the kernel of $P=\Delta_{M}$, and hence $\Delta_{M}u=0$.
By the maximum principle, $u=0$, which completes the proof of the lemma.
\end{proof}

The correct right inverse will be a slight modification of $Q_{b}^{-\delta}$.
In order to check consistency, we need to understand
the structure of $Q_{b}^{-\delta}$ near the front face ff=bf$_{0}\cap$zf.
This structure is described in detail in \cite{me} and \cite{ma}.
In particular, the leading order of $Q_{b}^{-\delta}$ at ff is precisely 
the indicial operator $I_{ff}(Q_{b}^{-\delta})$, which satisfies the equation
\begin{equation}\label{indicialeq}
-(\sigma'\partial_{\sigma'})^{2}+\Delta_{N})I_{ff}(Q_{b})=\delta(\sigma'=1,y=y').\end{equation}
Moreover, from \cite{me} and \cite{ma}, $I_{ff}(Q_{b}^{-\delta})$
has polyhomogeneous expansions at $\sigma'=0$ and $\sigma'=\infty$, with leading order
terms at worst $(\sigma')^{-\delta}$ at each end; that is, a small amount of
growth is allowed at $\sigma'=0$, and a small amount of decay is required at 
$\sigma'=\infty$.

We now separate variables and solve (\ref{indicialeq}) directly.
For each $j\geq 1$, $(\sigma')^{\pm\nu_{j}}$ span the kernel of
$-(\sigma'\partial_{\sigma'})^{2}+\nu_{j}^{2}$. Therefore,
the solutions corresponding to $E_{\nu_{j}}$
are combinations of $(\sigma')^{\nu_{j}}$ and $(\sigma')^{-\nu_{j}}$ away from $\sigma'=1$.
By the requirements at $\sigma'=0$ and $\sigma'=\infty$, our solution is a multiple of
$(\sigma')^{-\nu_{j}}$ for $\sigma'>1$ and of $(\sigma')^{\nu_{j}}$ for $\sigma'<1$. Using the matching
conditions at $\sigma'=1$ arising from the delta function singularity,
the solution on the eigenspace $E_{\nu_{j}}$ is
\[\Pi_{E_{\nu_{j}}}\frac{e^{-\nu_{j}|\log \sigma'|}}{2\nu_{j}}.\]

We have to consider $\nu_{0}=0$ separately;
the kernel of $-(\sigma'\partial_{\sigma'})^{2}$ is spanned by $1$ and $\log \sigma'$.
Because we require decay at $\sigma'=\infty$, the solution for $\sigma'>1$ must be zero. 
Then the matching conditions at $\sigma'=1$ 
imply that the solution is $\log \sigma'$ for $\sigma'<1$. Since projection
onto $E_{0}$ is simply $V^{-1}$, the zero-eigenspace
solution is $V\chi_{\{\sigma'<1\}}\log \sigma'$.
Therefore the leading order part of $Q_{b}^{-\delta}$ at ff is
\begin{equation}\label{indicialq}I_{ff}(Q_{b}^{-\delta})=
V^{-1}\chi_{\{\sigma'<1\}}\log \sigma' + \sum_{j=1}^{\infty} \Pi_{E_{\nu_{j}}}
\frac{e^{-\nu_{j}|\log \sigma'|}}{2\nu_{j}}.\end{equation}

Now compare (\ref{indicialq}) with (\ref{frombfo}). Let $\chi(z')$ be a smooth cutoff function on $M$, equal to 1 when $x'\in [0,1]$ and 0 whenever $x'\geq 2$. We see immediately that
if we let \[G_{zf}^{0}=(xx')^{-1}(Q_{b}^{-\delta}+V^{-1}\chi(z')\log x'+
V^{-1}(\log 2-\gamma)),\]
then $G_{zf}^{0,1}\log k+G_{zf}^{0}$ and $G_{bf_{0}}^{-2}$ are consistent. Additionally, $G_{zf}^0$ solves the model problem $(xP_{b}x)G_{zf}^{0}=Id$ at zf; the key is that any function of $z'$ is independent of $(x,y)$ and hence is in the kernel of $P_b$. Similarly, $G_{zf}^{0,1}\log k=-(Vxx')^{-1}\log k$ is in the kernel of $xP_bx$ and hence solves the model problem.. Moreover, the diagonal symbol is consistent with $G_{zf}^{0,1}\log k+G_{zf}^0$ for the same
reason that it is consistent with $G_{sc}^{0}$ and $G_{bf_{0}}^{-2}$.

\subsubsection{The model terms at rb$_{0}$}

Finally, we need to specify the leading-order behavior of the
parametrix at rb$_{0}$; in fact, we need to specify some lower-order
terms as well. We use the coordinates $(x,y,\kappa',y')$; the $\kappa'=0$ face
is rb$_{0}\cap$zf and the $x=0$ face is rb$_{0}\cap$bf$_{0}$.
There will be a term $G^{\nu_{j}-1}_{rb_{0}}$ for each $\nu_{j}$ in $[0,1)$. 
The model problem near this face, with $k$ as a boundary defining function,
is $(xP_{b}x)u=0$, so we need $P_{b}(xG^{-\nu_{j}}_{rb_{0}})=0$ for each 
$\nu_{j}\in [0,1)$.

First we focus on the model of order $-1$. We let
\[G^{-1}_{rb_{0}}=V^{-1}x^{-1}\kappa'K_{0}(\kappa'),\]
and claim that this is consistent with $G_{zf}^{0,1}\log k+G^{0}_{zf}$ and $G^{-2}_{bf_{0}}$.

To check consistency with $G_{zf}^{0,1}\log k+G^{0}_{zf}$, we need to show that the leading
order of $G_{zf}^{0,1}\log k+G^{0}_{zf}$ agrees with the leading order of $k^{-1}G^{-1}_{rb_{0}}$
at zf$\cap$rb$_{0}$. Recall that at rb$_{0}$, which corresponds to
$s=\infty$, $Q_{b}^{-\delta}$ decays to a positive order. So 
$(xx')^{-1}Q_{b}^{-\delta}$ has leading order greater than $-1$ at rb$_{0}$;
therefore, the leading order part of $G_{zf}^{0,1}\log k+G^{0}_{zf}$ at rb$_{0}$ is precisely
\[(xx')^{-1}(V^{-1}(-\log\kappa'+\log 2-\gamma)).\]
But by Bessel function asymptotics,
\[k^{-1}G^{-1}_{rb_{0}}=V^{-1}(xx')^{-1}K_{0}(\kappa')\sim
(xx')^{-1}V^{-1}(-\log\kappa'+\log 2-\gamma).\]
Therefore $G_{zf}^{0,1}\log k+G^{0}_{zf}$ and $G^{-1}_{rb_{0}}$ are consistent.

We must also check consistency of $G^{-1}_{rb_{0}}$ with $G^{-2}_{bf_{0}}$. 
Near rb$_{0}$,
\[k^{-2}G^{-2}_{bf_{0}}=V^{-1}(xx')^{-1}I_{0}(\kappa)K_{0}(\kappa')
+(xx')^{-1}\sum_{j=1}^{\infty}\Pi_{E_{\nu_{j}}}I_{\nu_{j}}
(\kappa)K_{\nu_{j}}(\kappa').\]
We are only interested in the order $-1$ part of this term. Since 
$x'$ and $\kappa$ both vanish to first order at rb$_{0}$, all the $j>0$
terms have leading order $-1+\nu_{j}$ at rb$_{0}$. 
Since $I_{0}(0)=1$, the order $-1$ part of $k^{-2}G^{-2}_{bf_{0}}$
at rb$_{0}$ is precisely
\[(xx')^{-1}V^{-1}K_{0}(\kappa')=k^{-1}(V^{-1}x^{-1}\kappa'K_{0}(x'))
=k^{-1}G^{-1}_{bf_{0}}.\]
We conclude that $G^{-1}_{rb_{0}}$ is consistent with the models at
zf and bf$_{0}$.

We also need to specify some lower order terms at $bf_{0}$; for this
we precisely follow Section 4 of \cite{gh1}. At zf, they need to match
with the asymptotics of $Q_{b}^{-\delta}$, and at bf$_{0}$, they need to match
with the higher order Bessel functions. Both of these involve only the nonzero
indicial roots, so the terms and arguments are identical to \cite{gh1}.
In particular,
for any $0<\nu_{j}<1$ in the indicial set, we let
\[G_{rb_{0}}^{\nu_{j}-1}=x^{-1}\frac{\kappa' 
K_{\nu_{j}}(\kappa')}{\Gamma(\nu_{j})2^{\nu_{j}-1}}
v_{\nu_{j}}(z,y'),\]
where $v_{\nu_{j}}(z,y')$ is in the kernel of $P_{b}$ with asymptotic
\[v_{\nu_{j}}(x,y,y')=(2\nu_{j})^{-1}\Pi_{E_{j}}x^{-\nu_{j}}+
\mathcal{O}(x^{-\nu_{j}-1}\log x).\]
The function $v_{\nu_{j}}$ 
is there to match with the asymptotics of $Q_{b}$ at $rb_{0}$,
as in Section 4 of \cite{gh1}. In fact, these models are consistent
with our models at bf$_{0}$ and at zf by precisely the same
argument as in \cite{gh1}; we will not repeat it here.

\subsection{The final parametrix and resolvent}

We have now constructed models at sc, bf$_{0}$, zf, and rb$_{0}$ which
are consistent with each other and also with the diagonal symbol.
Moreover, all the models decay to infinite order as we approach lb, rb, or bf.
Therefore, we specify our parametrix $G(k)$ to be any pseudodifferential operator
in $\Psi_{k}^{-2,\mathcal E}$ with kernel having 
the specified diagonal symbol and specified
leading-order terms at sc, bf$_{0}$, zf, and rb$_{0}$. The consistency we checked
guarantees that such an operator exists. The behavior of the kernel of $G(k)$
at lb$_{0}$ may be freely chosen as long as the leading-order term is order $-1$
and it matches with our models at zf and bf$_{0}$; a term of order $-1$ will,
however, be required.

Now let $E(k)=(P+k^{2})G(k)-Id$. Since $G(k)$ has diagonal symbol equal to
the inverse of the symbol of $P+k^{2}$, the Schwartz kernel of $E(k)$ is smooth
on the interior of $M^{2}_{w,sc}$. Moreover,
since $P+k^{2}$ is a differential operator,
the Schwartz kernel of $E(k)$ is phg conormal on $M^{2}_{w,sc}$.

- At lb, rb, and bf, the Schwartz kernel of $G(k)$ vanishes to infinite order
along with all derivatives, so the same is true of $E(k)$.

- At sc, $G^{0}_{sc}$ solves the model problem, 
so $E(k)$ has positive leading order at sc.

- At bf$_{0}$, $G(k)$ has order $-2$, but $G^{-2}_{bf_{0}}$ 
solves the model problem, and moreover $P+k^{2}$ vanishes to second order. 
Therefore $E(k)$ has positive leading order at bf$_{0}$.

- At zf, $G^{0}_{zf}$ and $G^{0,1}_{zf}$ solve the model problem, so $E(k)$ has positive
leading order.

- At lb$_{0}$, $G(k)$ has order $-1$. The variables 
$k$ and $x$ both vanish at lb$_{0}$, so $k^{2}G(k)$ has order $1$
and $xG(k)$ has order $0$. Since $P_{b}$ is a b-differential operator,
$P_{b}(xG(k))$ also has order 0, and hence $(P+k^{2})G(k)=(xP_{b}x+k^{2})G(k)$
has order 1. Since $Id$ is supported away from lb$_{0}$, $E(k)$ decays to
at least order 1 at lb$_{0}$.

- At rb$_{0}$, $G(k)$ has order $-1$, but all the terms $G^{\nu_{j}-1}$
for $\nu_{j}\in[0,1)$ solve the model problem. Therefore, the error
$E(k)$ has leading order at worst 0.

To summarize, if $\mathcal{E}$ is the index set for $E(k)$, we have shown:
\[\mathcal{E}_{sc}>0, \mathcal{E}_{zf}>0, \mathcal{E}_{bf_{0}}>0,\ 
\mathcal{E}_{lb_{0}}\geq 1, \mathcal{E}_{rb_{0}}\geq 0,\ 
\mathcal{E}_{lb}=\mathcal{E}_{rb}=\mathcal{E}_{bf}=\emptyset.\]

Now we iterate away the error. By Proposition \ref{composition},
$E(k)^{2}$ vanishes to positive order at all faces of $M^{2}_{k,sc}$;
suppose that the order of vanishing at each face is greater than $\epsilon>0$.
Again applying Proposition \ref{composition}, we see that for each $N\in\mathbb N$,
the order of vanishing of $E(k)^{2N}$ and $E(k)^{2N+1}$ at each face of $M^{2}_{k,sc}$
is greater than $N\delta$. Therefore the Neumann series
\[(Id +E(k))^{-1}=\sum_{i=0}^{\infty}E(k)^{i}\]
may be summed asymptotically, and the sum defines an element of 
$\Psi_{k}^{-\infty,\hat{\mathcal E}}$ for some index family $\hat{\mathcal E}$.

Finally, let $R(k)=G(k)(Id+E(k))^{-1}$; we see that $(P+k^{2})R(k)=Id$. Since
$P+k^{2}$ is invertible for all positive $k$, its only right inverse is the 
resolvent. We conclude that $R(k)$ is in fact the resolvent, and it is an
element of $\Psi_{k}^{-\infty,\mathcal R}(M,\tilde\Omega_{b}^{1/2})$ for
some index family $\mathcal R$.

In order to prove Theorem \ref{dslow}, we need to
perform this construction for any angle $\theta\in (-\pi,\pi)$,
not just for $\theta=0$. However, as Guillarmou and Hassell claim in \cite{gh1},
the construction is essentially unchanged. Indeed,
we just use $e^{i\theta/2}k$ as our boundary defining function for 
the $k=0$ faces instead of $k$, and correspondingly
change the model at $sc$ from $(\Delta_{\mathbb R^{2}}+k^{2})^{-1}$
to $(\Delta_{\mathbb R^{2}}+e^{i\theta}k^{2})^{-1}$.
The construction is then precisely analogous to the $\theta=0$ case;
by construction, the index sets are independent of $\theta$.
Moreover, by the continuity of the resolvent
outside the spectrum (also by construction), 
all the dependence on $\theta$ is smooth.
This completes the proof.

\subsection{Leading orders of the resolvent}

Since $R(k)=G(k)-G(k)E(k)+G(k)E(k)^{2}-\ldots$,
we can obtain some information about the leading orders of $R(k)$ at each
face. For $n\geq 3$, it is shown in \cite{gh1} that the leading orders
of $R(k)$ are the same as those of $G(k)$; we claim that the same is true
when $n=2$.

When $n=2$, $G(k)$ has leading orders $-1$ at lb$_{0}$ and rb$_{0}$,
order 0 at sc, order -2 at bf$_{0}$, and logarithmic growth at zf.
$E(k)$ has non-negative leading orders at all faces, and it is easy
to use Proposition \ref{composition} to show that 
the leading orders of $G(k)E(k)$ are no worse than those
of $G(k)$. Similarly, it may be shown that the leading orders of $G(k)E(k)^{l}$
are no worse than those of $G(k)$. Since $G(k)$ is fixed and
$Id-E(k)+E(k)^{2}-\ldots$ is asymptotically summable, the series
$G(k)-G(k)E(k)+\ldots$ is also asymptotically summable; therefore,
the leading orders of $R(k)$ are no worse than those of $G(k)$. The leading
order terms themselves may be affected by $G(k)E(k)$, but the orders are not.

To summarize, when $n=2$, the leading orders of the exact resolvent
$R(k)$ are no worse than 0 at sc, -2 at bf$_{0}$,
logarithmic at zf, and $-1$ at lb$_{0}$ and rb$_{0}$. 
When $n\geq 3$, the orders are at worst
0 at sc, -2 at bf$_{0}$, 0 at zf, and $\frac{n}{2}-2$
at lb$_{0}$ and rb$_{0}$, as in \cite{gh1}. However, these orders are for
the resolvent acting on b-half-densities, rather than the more natural
scattering half-densities. Switching to scattering half-densities
requires an order shift, adding $n/2$ at each of $\{x=0\}$ and $\{x'=0\}$.
So we need to add $n/2$ to the orders at lb$_{0}$ and rb$_{0}$, and $n$
at bf$_{0}$. The order at sc remains unchanged, because the extra
factor of $\rho_{sc}^{n/2}$ in the definition of the calculus
already incorporates the shift. So: viewing the resolvent as a scattering
half-density $|dgdg'|^{1/2}$ acting on scattering half-densities for each $k$,
or equivalently as a function acting on functions on $M$ by 
integration against $dg$, it has leading orders given by:
\begin{itemize}
\item $0$ at sc and $n-2$ at bf$_{0}$, rb$_{0}$, and lb$_{0}$:
\item $r_{zf}$ at zf, where $r_{zf}=0$ if $n\geq 3$ and
$r_{zf}=(0,1)$ (that is, leading order behavior of $\log\rho_{zf}$) if $n=2$.
\end{itemize}

\appendix

\section{Construction of the short-time heat kernel}

Albin has created a framework for the construction of the heat kernel on an asymptotically conic manifold; essentially all of the hard work involved in this construction has already been done in \cite{alb}. To complete the construction and prove Theorem \ref{zshorttime}, all we need to do is create an initial parametrix for the heat kernel. This construction is the content of this short appendix and is based on Section 5 of \cite{alb}, in which Albin constructs the heat kernel on an edge manifold.

The space in Theorem \ref{zshorttime}, which we call $S_{heat}$, is obtained by taking the manifold $M^2_{sc}\times[0,T)_t$ and then blowing up the $t=0$ diagonal. We call the scattering face of $M^2_{sc}\times[0,T)_t$ sf and call the front face at $t=0$ ff. The heat operator is precisely $\partial_t+\Delta_M$. Our goal is to create a parametrix which, to first order, solves the normal equations at sf and ff.

First analyze the situation at sf. As first discussed in \cite{me3} and elaborated upon in \cite{gh1}, sf has a Euclidean structure, parametrized by $y\in N$ and $t\in[0,T)$. By the same analysis as in \cite{gh1}, the normal operator at sf is precisely $\partial_t+\Delta_{\mathbb R^n}$. We then simply let the model at sf be the Euclidean heat kernel, $H^{\mathbb R^n}$. This is analogous to the construction in Section 5 of \cite{alb} for the edge setting, in which the model is the heat kernel on hyperbolic space times the heat kernel on the fiber.

The analysis at ff is also standard, since ff corresponds to the short-time regime on the interior of the manifold, where the heat kernel asymptotics are local. We know that $d(z,z')/\sqrt t$ is a good coordinate along ff, zero at the spatial diagonal and increasing to infinity as we approach the original $t=0$ face. We therefore let the leading-order model at ff be
\[H_{ff}=\frac{1}{(4\pi t)^{n/2}}e^{-\frac{(d(z,z'))^2}{4t}}.\] The choice of model at ff is again based on the Euclidean heat kernel, and is precisely the same as the choice of model in the edge setting \cite{alb}.

Each model vanishes to infinite order as we approach all boundary hypersurfaces other than sf and ff. Moreover, the models are consistent, as the leading orders of each are precisely the Euclidean heat kernel at sf $\cap$ ff. We may therefore pick a pseudodifferential operator whose Schwartz kernel agrees with our models to leading order at sf and ff, and decays to infinite order at all other boundary faces. In Section 4 of \cite{alb}, Albin proves a composition rule for time-dependent pseudodifferential operators whose kernels are polyhomogeneous conormal on $S_{heat}$ - our setting is the 'scattering' setting, which is included in his analysis. We then use this composition rule and an iteration argument, precisely as in Section 5 of \cite{alb}, to construct the heat kernel as a polyhomogeneous conormal distribution on $S_{heat}$. This completes the proof of Theorem \ref{zshorttime}.

\end{document}